\definecolor{azure(colorwheel)}{rgb}{0.0, 0.5, 1.0}
\definecolor{hanpurple}{rgb}{0.32, 0.09, 0.98}
\definecolor{iris}{rgb}{0.35, 0.31, 0.81}
\definecolor{byzantine}{rgb}{0.74, 0.2, 0.64}
\definecolor{ashgrey}{rgb}{0.7, 0.75, 0.71}
\definecolor{battleshipgrey}{rgb}{0.52, 0.52, 0.51}
\let\reftagform@=\tagform@
\def\tagform@#1{\maketag@@@{(\ignorespaces\textcolor{purple}{#1}\unskip\@@italiccorr)}}
\renewcommand{\eqref}[1]{\textup{\reftagform@{\ref{#1}}}}
\DeclareUrlCommand\ULurl@@{%
  \def\UrlLeft{\uline\bgroup}%
  \def\UrlRight{\egroup}}
\def\ULurl@#1{\hyper@linkurl{\ULurl@@{#1}}{#1}}
\DeclareRobustCommand*\ULurl{\hyper@normalise\ULurl@}
\def\lessim{\ \lower4pt\hbox{$
		\buildrel{\displaystyle <}\over\sim$}\ }
\def\gessim{\ \lower4pt\hbox{$\buildrel{\displaystyle >}
		\over\sim$}\ }
\numberwithin{equation}{section}
\newcommand{\R}{\mathbb{R}}
\newcommand{\E}{\mathbb{E}}
\newcommand{\Prob}{\mathbb{P}}
\newcommand{\Z}{\mathbb{Z}}
\newcommand{\pair}[2]{\left\langle #1, #2 \right\rangle}
\newcommand{\proj}{\mathrm{proj}}
\newcommand{\tendsto}[2]{\xrightarrow[#1 \to #2]{}}
\newcommand{\ind}{\mathbbm{1}}
\newcommand{\g}{\mathfrak{g}}
\newcommand{\scl}{\mathrm{scl}}
\newcommand{\diam}{\mathrm{diam}}
\newcommand{\Aut}{\mathrm{Aut}}
\newtheorem{lemma}{Lemma}
\newtheorem{thm}{Theorem}
\newtheorem{rmk}{Remark}
\newtheorem{prop}{Proposition}
\begin{document}

\author{Antonio Auffinger \thanks{Department of Mathematics, Northwestern University, tuca@northwestern.edu, research partially supported by NSF Grant CAREER DMS-1653552 and NSF Grant DMS-1517894.} \\
	\small{Northwestern University}\and Christian Gorski \thanks{Department of Mathematics, Northwestern University, Email: christiangorski2022@u.northwestern.edu, research partially supported by NSF Grant DMS-1502632
RTG: Analysis on Manifolds.}\\
	\small{Northwestern University}}
\title{Asymptotic shapes for stationary first passage percolation on virtually nilpotent groups}

\maketitle
\begin{abstract}
We study first passage percolation (FPP) with stationary edge weights on Cayley graphs of finitely generated virtually nilpotent groups.
Previous works of Benjamini-Tessera \cite{BenjaminiTessera} and Cantrell-Furman \cite{CantrellFurman} show that scaling limits of such FPP are given by 
Carnot-Carath\'eodory metrics on the associated graded nilpotent Lie group.
We show a converse, i.e. that for any Cayley graph of a finitely generated nilpotent group,
any Carnot-Carath\'eodory metric on the associated graded nilpotent Lie group is the scaling limit of some FPP with stationary
edge weights on that graph.
Moreover, for any Cayley graph of any finitely generated \emph{virtually} nilpotent group, 
any ``conjugation-invariant'' metric  is the scaling limit of some 
FPP with stationary edge weights on that graph.
We also show that the ``conjugation-invariant'' condition is also a necessary condition in all cases where scaling limits are known to exist.

\end{abstract}

\section{Introduction}
\subsection{Main result}

First passage percolation (FPP) was introduced by Hammersley and Welsh \cite{HW} in 1965  as a model for the spread of a fluid through a porous medium. It is a random perturbation of a given graph distance, where random lengths are assigned to edges of a fixed graph. For a survey on this model, the reader is invited to read \cite{Aspects, ADH} and the references therein. 

The most studied case is when the fixed graph is $\Z^d$ and the edge weights are i.i.d. random variables. Under suitable moment conditions on the weight distribution, one obtains the famous shape theorem of Cox and Durrett ($d=2$) \cite{CD} and Kesten ($d>2$)\cite{Aspects}: there exists a norm $\mu$ on $\mathbb R^{d}$ such that FPP on $\Z^d$ has almost surely a deterministic scaling limit given by the normed vector space $(\mathbb R^d, \mu)$. The limiting norm $\mu$ depends on the distribution of the edge weights. It is a famous open question to determine which possible metrics arise as FPP limits on $\Z^d$ with i.i.d. edge weights. In particular, it is expected that the limit unit ball should be strictly convex, ruling out trivial metrics such as $\ell_{1}$ or $\ell_{\infty}$.

In 1995, Haggstrom and Meester\cite{HaggstromMeester} showed that if the assumption of i.i.d. edge weights on $\mathbb Z^{d}$ is relaxed, some of the expected restrictions on the limit norm disappear. Precisely, they showed that for any norm $\rho$ on $\R^d$ there exist \emph{stationary} edge weights on $\Z^d$ which give a FPP model whose scaling limit is $(\R^d, \rho)$. In this paper, we explore this direction for FPP in different (non-abelian) graphs.

Benjamini and Tessera\cite{BenjaminiTessera} explored i.i.d. FPP models on Cayley graphs of a finitely generated virtually nilpotent groups. This class of groups is precisely the class of groups with polynomial growth, due to a famous theorem of Gromov, and includes the classical example of $\Z^d$.
The question of scaling limits of such groups was first answered in the deterministic setting by Pansu ~\cite{Pansu}, who proved that, for a 
large class of invariant metrics on such groups, the scaling limit is given by a Carnot-Carath\'eodory metric on a certain nilpotent Lie group. 

Benjamini and Tessera prove that, under mild conditions, an i.i.d. FPP on a nilpotent Cayley graph also has a deterministic scaling limit given
by a Carnot-Carath\'eodory metric on a nilpotent Lie group.
Later  Cantrell and Furman ~\cite{CantrellFurman} proved an analogous theorem for \emph{stationary} edge weights.
Again, in all these cases, the limit shape depends on the distribution of the edge weights, and in the i.i.d. case, restrictions on realizable metrics
are conjectured but largely unproven.

A natural question then arises, in the spirit of Haggstrom and Meester \cite{HaggstromMeester} : for stationary FPP on virtually nilpotent groups, are all possible limit shapes realizable? What are the required symmetries for the limit metric? More explicitly, given a Cayley graph of some finitely generated virtually nilpotent group and a Carnot-Carath\'eodory metric on the associated nilpotent Lie group, 
do there exist stationary edge weights which give a FPP with a scaling limit given by that Carnot-Carath\'eodory metric?
The goal of this paper is to provide an affirmative answer to this last question in the nilpotent case and to obtain a similar characterization of all limit shapes of stationary FPPs in the virtually nilpotent case. Our main theorem is the following.

\begin{thm} \label{mainthm}
Let $\Gamma$ be a finitely generated virtually nilpotent group with generating set $S$, and let $E$ be the edge set of the
corresponding Cayley graph. Let $d_{\Phi}$ be a Carnot-Carath\'eodory
metric on the associated graded Lie group $G_{\infty}$. If $\Phi$ is conjugation invariant, then there exist stationary weights
$w:E \to \R_{\ge 0}$ such that the associated metric space $(\Gamma, T)$ satisfies
\[ \left (\Gamma, \frac{1}{n}T \right) \tendsto{n}{\infty} (G_{\infty}, d_{\Phi}) \]
in the sense of pointed Gromov-Hausdorff convergence.
\end{thm}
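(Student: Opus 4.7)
The plan is to adapt the Haggstrom-Meester construction \cite{HaggstromMeester} to the nilpotent setting by building stationary weights that consist of a uniformly high ``background'' cost decorated with a sparse random system of bi-infinite ``highways'' whose directions and speeds are tuned to recover the Carnot-Carath\'eodory metric $d_\Phi$. Once this is done for a nilpotent normal subgroup $N \trianglelefteq \Gamma$ of finite index, the conjugation invariance of $\Phi$ lets us lift the construction to one that is stationary under the full group $\Gamma$.

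For the nilpotent case, recall that by Pansu's theorem \cite{Pansu} the first layer $V_1$ of the graded Lie algebra is spanned by the projections $\pi(s)$ of the generators $s \in S$, and $d_\Phi$ is determined by an admissible asymmetric norm $\Phi$ on $V_1$. Fix a countable dense family of directions $\{v_i\} \subset V_1$. For each $v_i$, build a periodic bi-infinite path $\gamma_i : \Z \to \Gamma$ obtained by repeating a word in $S$ whose projection to $V_1$ points in direction $v_i$. Weight each edge of such an ``$i$-highway'' by $1/\Phi(v_i)$ (up to a discretization), and assign all other edges a large uniform weight $M$. To obtain stationarity under $N$, place left translates of the $i$-highways at a random set of offsets, chosen for each $i$ independently and with density $\delta_i \downarrow 0$ fast enough that highways of different indices interact only negligibly at typical scales.

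The next task is to show that the induced FPP metric $T$ has scaling limit $d_\Phi$. The Cantrell-Furman theorem \cite{CantrellFurman} guarantees convergence to \emph{some} CC metric $d_{\Phi'}$ on $G_\infty$, so it remains to verify $\Phi' = \Phi$. The upper bound $\Phi' \le \Phi$ is immediate: any horizontal path in direction $v_i$ can be approximated in $\Gamma$ by a concatenation of $i$-highway segments, yielding travel time $\Phi(v_i)$. The lower bound is the main technical obstacle; we must show that a typical geodesic cannot exploit multiple highway systems to beat the speed $\Phi$. This calls for a two-scale probabilistic estimate: on macroscopic scales any off-highway segment costs linearly in $M$ and can be discarded by taking $M$ large, while the density tunings $\delta_i$ must be small enough that the only asymptotically optimal use of the system is to travel along $i$-highways whose direction is essentially the net direction $v$ of the geodesic, producing speed exactly $\Phi(v)$ by admissibility of the norm.

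For the general virtually nilpotent case, take a finite-index normal nilpotent subgroup $N \trianglelefteq \Gamma$. The finite quotient $F = \Gamma/N$ acts on $G_\infty = G_\infty(N)$ by the conjugation action described in \cite{CantrellFurman}, and the hypothesis is that $\Phi$ is invariant under this $F$-action. Apply the nilpotent construction to $N$ using a generating subset of $S$ contained in $N$; the resulting weights are $N$-stationary. To upgrade to $\Gamma$-stationarity, distribute the highway construction across the cosets of $N$ in an $F$-equivariant manner. Because $\Phi$ is $F$-invariant, this averaging preserves the target CC metric, and we obtain stationary weights on all of $\Gamma$ realizing $d_\Phi$ as a scaling limit. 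The chief difficulty throughout is the lower-bound argument in the previous paragraph; the extension to virtually nilpotent groups is largely a bookkeeping step made legitimate by the conjugation invariance hypothesis.
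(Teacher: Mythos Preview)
Your overall plan—lift Haggstrom–Meester highways, invoke Cantrell–Furman to reduce to identifying the limit norm, then handle the virtually nilpotent case by averaging over cosets—is close in spirit to the paper's, but there are two genuine gaps.

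First, the lower bound. Your suggestion that ``density tunings $\delta_i$'' prevent a geodesic from profitably mixing highway systems is not the mechanism that actually works, and it is unclear how to make it work: a path that zig-zags between two highway families gains fast edges in proportion to its length, so sparsity alone cannot defeat it. In the Haggstrom–Meester construction (and in the paper) the point is \emph{not} that highways are sparse; it is that the weight of a fast edge on an $n$-highway is calibrated to be exactly $\langle D(f), b_n\rangle/\|b_n\|_2^2$, so that for any fast subpath $\gamma$ the ratio $D_\parallel(\gamma)/T(\gamma)$ lands on $\partial B$. Convexity of $B$ (Lemma~\ref{convexlemma}) then forces $D(\gamma)/(T(\gamma)+\mathrm{const})\in B$ for \emph{every} path, no matter how many highway systems it mixes. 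The lower bound is thus deterministic once the weights are set; no probabilistic two-scale estimate is needed or used.

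Second, and more seriously, the passage to the virtually nilpotent case does not work as you describe. You write ``apply the nilpotent construction to $N$ using a generating subset of $S$ contained in $N$,'' but $S\cap N$ need not generate $N$ and may even fail to (e.g.\ in $\Gamma=\langle\rho\rangle\ltimes\Z[i]$ with $S=\{\rho,1\}$ one has $S\cap N=\{1\}$). There is in general no homomorphism $\Gamma\to N^{ab}_{\mathrm{free}}$, so paths in the $(\Gamma,S)$-Cayley graph do not project to an abelian displacement. The paper's route is genuinely different: it works directly on the Cayley graph of $(\Gamma,S)$ via the projection $\Gamma\to\Gamma/\widetilde{[N,N]}$, which is only \emph{virtually} abelian, and defines a displacement $D(\gamma)\in N^{ab}_{\mathrm{free}}$ that is additive only up to a bounded cocycle error and a $\phi(Q)$-twist (Proposition~\ref{noncommdisplacement}). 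Conjugation invariance of $\Phi$ enters precisely here—inside the convexity argument for the lower bound, to absorb the $\phi$-twists—and not via any averaging over cosets. Your ``$F$-equivariant distribution across cosets'' does not address this, because the weights must live on the fixed Cayley graph of $(\Gamma,S)$, not on an auxiliary graph for $N$.
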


\begin{figure}[t]
   \centering
   \includegraphics[scale=.4]{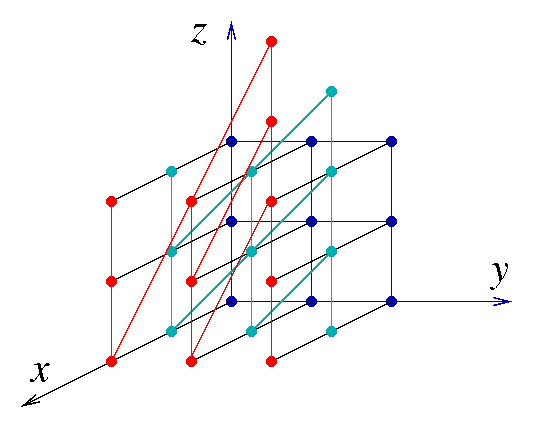}
   \caption{A portion of the Cayley graph of $H(\Z)$ with respect to the generating set $\{X,Y,Z\}$. Source: Wikipedia; image by Gabor Pete.
   Colors are for visual contrast only.}
\end{figure}

 To make the theorem more concrete, let us consider the example of the Heisenberg group, the simplest nonabelian nilpotent group.
 The integer Heisenberg group $H(\Z)$ has presentation
 \[ \langle X, Y, Z | [X,Y]=Z, [X,Z]=[Y,Z]=1 \rangle, \]
 and can be realized as the subgroup
 \[ \left\{
    \begin{bmatrix}
    1 & a & b \\
       & 1 & c \\
       &    & 1
    \end{bmatrix}
    : a,b,c \in \Z 
    \right\}
 \]
 of $GL_3(\R)$. It sits as a cocompact lattice inside the real Heisenberg group $H(\R)$, the group of real upper triangular matrices
 with 1s on the diagonal. Given any norm $\Phi$ on the subspace
 \[ V := \left\{
    \begin{bmatrix}
      & a &   \\
      &    & c \\
      &    &
    \end{bmatrix}
    :
    a,c \in \R
    \right\}
 \]
 of the Lie algebra of $H(\R)$, there exists a metric called the Carnot-Carath\'eodory metric $d_{\Phi}$ on $H(\R)$ associated to $\Phi$
 (see Appendix \ref{liegroupconstructions}). So in the special case of the Heisenberg group, our theorem is as follows:
 \begin{thm}
 Let $\Phi$ be any norm on $V$, $d_{\Phi}$ the associated Carnot-Carath\'eodory metric on $H(\R)$. Then, given any Cayley graph
 of $H(\Z)$, there exist stationary edge weights $w:E \to \R_{\ge 0}$ ($E$ the edge set of the Cayley graph) such that the resulting
 FPP metric $T$ is such that
 \[ \left(H(\Z), \frac{1}{n}T\right) \tendsto{n}{\infty} (H(\R), d_{\Phi}) \]
in the sense of pointed Gromov-Hausdorff convergence.
 \end{thm}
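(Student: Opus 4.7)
The plan is to read this theorem as the direct specialization of Theorem \ref{mainthm} to $\Gamma = H(\Z)$, and all the work lies in verifying the hypotheses. First, $H(\Z)$ is a finitely generated $2$-step nilpotent group, hence in particular virtually nilpotent, so Theorem \ref{mainthm} applies to any of its Cayley graphs. Second, I identify the associated graded Lie group. The lower central series of $H(\Z)$ is $H(\Z) \supseteq \langle Z \rangle \supseteq \{1\}$, so the associated graded real Lie algebra decomposes as $V \oplus W$, with $V$ spanned by the images of $X$ and $Y$ and $W$ spanned by $Z$, and with bracket inherited from $[X,Y]=Z$. The simply connected Lie group $G_\infty$ integrating this graded Lie algebra is isomorphic to $H(\R)$, and a norm $\Phi$ on the horizontal layer $V$ then yields the Carnot-Carath\'eodory metric $d_\Phi$ on $H(\R)$ in the sense of Appendix \ref{liegroupconstructions}.

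The one genuine check is conjugation invariance of $\Phi$. For any nilpotent group $G$ with lower central series $G = G_1 \supseteq G_2 \supseteq \cdots$, the commutator identity $[g,v] \in G_{i+1}$ for $g \in G$ and $v \in G_i$ shows that the conjugation action of $G$ on each graded piece $G_i/G_{i+1}$ is trivial. Passing to the graded Lie algebra of $H(\R)$, the induced action of $H(\Z)$ on the horizontal layer $V$ is trivial, so \emph{every} norm $\Phi$ on $V$ is automatically conjugation invariant in the sense required by Theorem \ref{mainthm}. (A nontrivial conjugation action on the graded Lie algebra can appear only when a nontrivial finite quotient is present, as in the virtually nilpotent setting; this is why the full main theorem carries the stronger conjugation invariance hypothesis.)

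Having verified the hypotheses, I would simply invoke Theorem \ref{mainthm} to obtain stationary weights $w:E \to \R_{\ge 0}$ such that $(H(\Z), \frac{1}{n}T)$ converges in the pointed Gromov-Hausdorff sense to $(H(\R), d_\Phi)$. I do not expect any real obstacle in this reduction; every technical step, in particular the construction of stationary edge weights realizing an arbitrary Carnot-Carath\'eodory scaling limit on a general nilpotent Cayley graph, is absorbed into the proof of Theorem \ref{mainthm} itself, which constitutes the main technical contribution of the paper.
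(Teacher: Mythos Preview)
Your proposal is correct and matches the paper's treatment: the Heisenberg theorem is stated precisely as the specialization of Theorem~\ref{mainthm} to $\Gamma=H(\Z)$, with no separate proof given. Your observation that conjugation invariance is automatic because $H(\Z)$ is already nilpotent (so the induced action on $N^{ab}\otimes\R$ is trivial) is exactly the point the paper makes in Section~\ref{defnsandbackground}.
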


\subsection{Definitions, notations, and background} \label{defnsandbackground}
We now provide the definitions and the setup for Theorem \ref{mainthm}. 
Let $\Gamma$ be a finitely generated virtually nilpotent group, 
and let $S$ be a finite generating set. 
 The Cayley graph associated to $(\Gamma,S)$
 is the graph with vertex set $\Gamma$ and edge set $E:=\{ \{g,gs\} : g \in \Gamma, s \in S \}$. 
 For an element $g \in \Gamma$, set 
 \[ |g| := \inf \{ n \ge 0 : \exists s_1,...,s_n \in S \cup S^{-1} \mbox{ such that } s_1 \cdots s_n = g\}, \]
 and denote by $d$ the word metric
 \[ d(x,y) := |x^{-1} y| \]
 on $\Gamma$. Note that $d$ is a left-invariant metric on $\Gamma$.
 If $\gamma$ is an edge path in $E$, we will denote by $|\gamma|$ the number of edges in $\gamma$.
 Thus we have
 \[ d(x,y) = \inf \{ |\gamma| : \gamma \mbox{ is a path from } x \mbox{ to } y \}. \]
  
 Let $w$ be a random function $w:E \to [0,\infty)$. We call $w(e)$ the \emph{weight} of the edge $e$. 
 The collection of weights $w$ is called \emph{stationary} if the distribution is invariant under the left action of $\Gamma$, that is,
 for every finite collection of edges $f_1,...,f_k \in E$ and every $g \in \Gamma$, the joint distributions of $(w(f_1),...,w(f_k))$
 and $(w(g^{-1} f_1),..., w(g^{-1} f_k))$ are equal. The weights are called \emph{ergodic} if the underlying probability space
 is ergodic, that is, if all $\Gamma$-invariant events have probability $0$ or $1$.
 
 For an edge path $\gamma=(f_1,...,f_k)$,
 we define 
 \[ T(\gamma) := \sum_{i=1}^k w(f_i) \]
 and for two $x,y \in \Gamma$ we define the \emph{passage time} from $x$ to $y$ to be
 \[ T(x,y) := \inf \{ T(\gamma) : \gamma \mbox{ is a path from } x \mbox{ to } y \}. \]
 $T$ is a random pseudo-metric on $\Gamma$ and the pseudo-metric space $(\Gamma, T)$ is called  \emph{first passage percolation} or \emph{FPP} on $\Gamma$. Taking expectations
 we see that $\E T$ also gives a metric on $\Gamma$; if $w$ is stationary, then this metric is left-invariant.

Let $N$ be a finite index normal torsion-free nilpotent subgroup of $\Gamma$. (Such a subgroup is constructed in the course
of Appendix \ref{grouptheory}.) We denote the abelianization $N/[N,N]$ of $N$ by $N^{ab}$. This is a finitely generated abelian group, and so its torsion elements
form a finite subgroup $N^{ab}_{tor}$. We define $N^{ab}_{free} := N^{ab}/N^{ab}_{tor}$.

There is a graded nilpotent Lie group $G_{\infty}$ associated to $\Gamma$ (via $N$), and a certain subalgebra of its Lie algebra, which we denote
 by $\g^{ab}$, is equipped with a natural isomorphism $N^{ab} \otimes \R \cong \g^{ab}$. Each norm $\Psi$ on $\g^{ab}$ determines
 a metric $d_{\Psi}$ on $G_{\infty}$ which is called the Carnot-Carath\'eodory metric associated to $\Psi$; conversely, every Carnot-Carath\'eodory
 metric on $G_{\infty}$ comes from a unique norm on $\g^{ab}$. More explicit descriptions and constructions of these objects can be found in 
 Appendix \ref{liegroupconstructions}, as well as ~\cite{CantrellFurman}.

Lastly, there is a construction which plays a central role in our proof,
which associates a norm on $\g^{ab}$ to a metric on $\Gamma$.
Since $| \cdot |$ is a symmetric subadditive function on $\Gamma$ (i.e. $|ab| \le |a| + |b|$ for all $a,b \in \Gamma$), and
hence a symmetric subadditive function on $N$,
it induces a symmetric subadditive function on $N^{ab}_{free} \cong \Z^d$ via the quotient map $N \to N^{ab}_{free},
x \mapsto x^{ab}_{free}$:
\[ |y|_{ab} := \inf_{x \in N, x^{ab}_{free} = y} |x|. \]
As a symmetric subadditive function on $N^{ab}_{free} \cong \Z^d$, $|\cdot|_{ab}$ is asymptotically equivalent to a unique seminorm on 
$\R^d \cong N^{ab}_{free} \otimes \R \cong N^{ab} \otimes \R$.
That is, there is a unique seminorm $\|\cdot\|$ on $N^{ab} \otimes \R$ such that 
\[ \|y\| - |y|_{ab} = o(y) \]
where the in the little-o notation we may take any norm on $N^{ab} \otimes \R$ to measure $y$.
Similarly, assuming our weights are integrable, $\E T(1,\cdot)$ is also subadditive, 
and hence it induces a subadditive fuction $\tilde{T}$ on $N^{ab}_{free}$ which is 
asymptotically equivalent to a unique seminorm
$\Phi$ on $N^{ab} \otimes \R$.

The conjugation action of $\Gamma$ on $N$ induces an action of $\Gamma$ on $N^{ab} \otimes \R$, hence induces an action
on the set of norms on $N^{ab} \otimes \R$. We call a norm on $N^{ab} \otimes \R$ \emph{conjugation-invariant} if it is invariant
under this action. The conjugation action is discussed further in Section \ref{restrictions}, but in the case that $\Gamma$
itself is already nilpotent, the action is trivial, and hence in this case all norms on $N^{ab} \otimes \R$ are conjugation invariant.
In the Section \ref{restrictions} we also show that conjugation-invariance is a necessary restriction, that is, if $\Phi$ is a norm associated
to an invariant metric (such as $\E T$ when each $T(x,y)$ is integrable), then $\Phi$ is necessarily conjugation-invariant.

  In the notations above, it is known that $(G, d_{\| \cdot \|})$ is the scaling limit of $(\Gamma, d)$ ~\cite{Pansu} and that $(\Gamma, T)$ 
  almost surely has 
  scaling limit $(G_{\infty}, d_{\Phi})$ for many choices of edge weights \cite{BenjaminiTessera, CantrellFurman}.  Theorem \ref{mainthm} above 
  shows that any Carnot-Carath\'eodory  $d_{\Psi}$  as in \eqref{met:CC} is the scaling limit of some stationary FPP model
 on any Cayley graph of $\Gamma$, so long as $\Psi$ is conjugation-invariant.

\subsection{Proof strategy and organization of the paper}

The following theorem of Cantrell and Furman\cite{CantrellFurman}  provides a starting point for us:
 
 \begin{thm}{(\cite{CantrellFurman})} \label{cantrellfurmanthm}
 Let $w$ be ergodic stationary weights such that $T$ is bi-Lipschitz to $d$, that is, there exist $0<k<K<\infty$ such that
 \[ kd(x,y) \le T(x,y) \le Kd(x,y) \]
 for all $x,y \in \Gamma$ almost surely. Let $\Phi$ be the norm on $\g^{ab}$ associated to the metric $\E T$ on $\Gamma$, 
 and let $d_{\Phi}$ be the Carnot-Carath\'eodory metric on $G_{\infty}$ associated to $\Phi$, as above. Then almost surely
\begin{equation}\label{ehww}
 \left(\Gamma, \frac{1}{n} T, 1\right) \tendsto{n}{\infty} (G_{\infty}, d_{\Phi}, 1) 
 \end{equation}
 is the sense of pointed Gromov-Hausdorff convergence.
 \end{thm}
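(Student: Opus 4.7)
The plan is to combine Pansu's deterministic scaling limit with a subadditive ergodic theorem along the nilpotent subgroup $N$, and then to upgrade directional pointwise convergence to pointed Gromov--Hausdorff convergence using the bi-Lipschitz hypothesis for compactness.

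First, I would use the bi-Lipschitz bound $kd \le T \le Kd$ to extract precompactness. Pansu's theorem yields $(\Gamma, \tfrac{1}{n}d, 1) \to (G_{\infty}, d_{\|\cdot\|}, 1)$ in the pointed Gromov--Hausdorff topology. Because the rescaled metrics $\tfrac{1}{n}T$ are uniformly bi-Lipschitz equivalent to $\tfrac{1}{n}d$ on the same underlying set, the family $\{(\Gamma, \tfrac{1}{n}T, 1)\}_{n \ge 1}$ is precompact almost surely, and every subsequential limit has the form $(G_{\infty}, d_{*}, 1)$ with $d_{*}$ a left-invariant geodesic metric on $G_{\infty}$ bi-Lipschitz equivalent to $d_{\|\cdot\|}$. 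The whole problem is then to show that the only such limit is $d_{\Phi}$ almost surely.

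Second, I would prove directional pointwise convergence along the torsion-free nilpotent subgroup $N$. For $g \in N$, the sequence $a_n := T(1, g^n)$ satisfies the subadditive inequality $a_{n+m} \le a_n + T(g^n, g^{n+m})$, and by stationarity $T(g^n, g^{n+m})$ is distributed as $T(1, g^m)$ under the measure-preserving shift by $g$. Applying Kingman's subadditive ergodic theorem to the $\Z$-action generated by $g$ gives $\tfrac{1}{n}T(1, g^n) \to \psi(g)$ almost surely for some deterministic $\psi(g) \ge 0$. A diagonal argument over a countable dense family of directions, combined with interpolation using the Lipschitz bound $|T(1,g^n) - T(1,g^m)| \le K|g^{n-m}|$, then extends this to $\tfrac{1}{n} T(1, h_n) \to \Phi(v)$ almost surely whenever $h_n \in N$ satisfies $d(1,h_n) = O(n)$ and $(h_n)^{ab}_{free}/n \to v \in N^{ab} \otimes \R$, the limit depending only on $v$ because the restriction of $T$ to any fixed coset of $[N,N]$ differs from its restriction to $N$ by a uniformly bounded quantity and because $\Phi$ was defined precisely as the asymptotic seminorm of $\E T$.

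Third, I would transfer pointwise convergence in finitely many abelianized directions to Gromov--Hausdorff convergence. The metric $d_{\Phi}$ on $G_{\infty}$ is Carnot--Carath\'eodory, so distances are realized by horizontal curves whose tangent vectors lie in $\g^{ab}$ and have $\Phi$-length equal to $d_{\Phi}$. Given any points $x, y \in G_{\infty}$, I would approximate the minimizing horizontal curve by a piecewise-constant-velocity polygonal path with vertices in $\g^{ab}$, lift each piece to a sequence in $N$ whose rescaled abelianization converges to the corresponding velocity vector, and concatenate these lifts to obtain a discrete path in $\Gamma$ whose rescaled $T$-length converges to $d_{\Phi}(x,y)$ by step two. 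The matching lower bound — that no path in $\Gamma$ can do substantially better — follows by partitioning any near-geodesic for $T$ into segments of mesoscopic scale, projecting each segment to $N^{ab} \otimes \R$, and applying the almost sure lower bound from step two together with the definition of $d_{\Phi}$ as an infimum of $\Phi$-lengths of horizontal paths. This pins down $d_{*} = d_{\Phi}$ on all pairs of limit points and thereby identifies every subsequential limit, giving the full convergence.

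The main obstacle is this third step: promoting a countable-dense family of one-directional ergodic-theorem statements to a uniform statement about distances between arbitrary pairs of rescaled points, which requires both the equicontinuity afforded by the bi-Lipschitz hypothesis and a genuine horizontal-curve approximation on the graded nilpotent Lie group. This is exactly where the Carnot--Carath\'eodory structure of $G_{\infty}$ enters essentially, and why the bi-Lipschitz assumption, rather than weaker moment conditions, is needed to control the path decomposition.
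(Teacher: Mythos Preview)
This theorem is cited from \cite{CantrellFurman} rather than proved in the paper. The paper's own contribution is confined to two appendices: Appendix~\ref{grouptheory} checks that the norm $\Phi$ described here via $N^{ab}_{free}$ coincides with Cantrell--Furman's construction, and Appendix~\ref{fillgap} supplies a step omitted in \cite{CantrellFurman}, namely the passage from the pointwise statement (for each fixed $(g,g') \in G_{\infty} \times G_{\infty}$, equation~\eqref{eq:ptwise} holds almost surely) to pointed Gromov--Hausdorff convergence, which \emph{a priori} involves an uncountable family of null sets.

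Your outline is broadly sound, and your third step is exactly this gap. Appendix~\ref{fillgap} treats it by a compactness-and-contradiction argument rather than your constructive one: take a countable dense set of pairs in $G_{\infty} \times G_{\infty}$ on which \eqref{eq:ptwise} holds simultaneously, suppose GH convergence fails along some $(\gamma_n,\gamma'_n)$, extract a limit $(g_0,g'_0)$ via Pansu's theorem, approximate by a nearby dense-set pair $(g_{m_0},g'_{m_0})$, and use the one-sided bound $T \le Kd$ together with Pansu to control the discrepancy. Your proposal instead builds explicit upper and lower bounds via piecewise-horizontal approximation of Carnot--Carath\'eodory geodesics and mesoscopic decomposition of $T$-near-geodesics; this is workable but heavier, since the paper's route exploits the \emph{two-sided} pointwise estimate already proved in \cite{CantrellFurman} and so avoids a separate lower-bound argument.

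One genuine gap in your step two: Kingman for the $\Z$-action of a single $g \in N$ yields a $g$-invariant limit $\psi(g)$, but $\Gamma$-ergodicity alone does not make $\psi(g)$ deterministic, because the cyclic subaction need not be ergodic. You must still argue that $\psi(g)$ is $\Gamma$-invariant (for instance via $|g^{-n} h g^n| = o(n)$ in a nilpotent group, so that the shift by $h$ perturbs $T(1,g^n)$ sublinearly), or first identify the limit through $\E T$ and then match.
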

 \begin{rmk}
 The fact that the norm $\Phi$ we describe above is the same norm constructed in ~\cite{CantrellFurman} is perhaps not obvious
 except in the case that $\Gamma=N$ is torsion-free with torsion-free abelianization. A proof that the two constructions do give the
 same answer is given in Appendix \ref{grouptheory}.
 \end{rmk}
 \begin{rmk}
 Cantrell and Furman don't require the random metric $T$ to come from
 edge weights but require it to be \emph{inner} (see Appendix \ref{grouptheory}) in addition to being bi-Lipschitz to $d$.
 On the other hand, if $T$ comes from edge weights which are uniformly bounded above (implied by the bi-Lipschitz
 condition on $T$), then $T$ is inner, so the above statement is implied by the main theorem of ~\cite{CantrellFurman}.
 Thus our theorem shows that the collection of scaling limits of FPPs coming from stationary edge weights on a fixed Cayley graph is no smaller
 than the collection of scaling limits of stationary inner metrics which are bi-Lipschitz to $d$.
 \end{rmk}

 \begin{rmk}
In Appendix \ref{fillgap} we provide a step that was omitted in the proof of Theorem \ref{cantrellfurmanthm} in \cite{CantrellFurman}. It guarantees that the convergence in \eqref{ehww} is indeed in Gromov-Hausdorff sense. See Remark \ref{rem:correction} for more details.
 \end{rmk}
 
 In view of Theorem \ref{cantrellfurmanthm} and the correspondence between Carnot-Carath\'eodory metrics and norms on $\g^{ab}$, 
 in order to prove Theorem \ref{mainthm}, it suffices to prove:
 \begin{thm} \label{reducedthm}
 Let $\Gamma$ be a finitely generated virtually nilpotent group with generating set $S$, and let $E$ be the edge set of the
corresponding Cayley graph.
 Let $\Psi$ be a norm on $N^{ab} \otimes \R$ which is conjugation-invariant. Then there exist ergodic stationary weights $w:E \to \R$ such that
 $T$ is bi-Lipschitz to $d$, and such that the subadditive function on $N^{ab}_{free}$ induced by $\E T(1, \cdot)$
 is asymptotically equivalent to $\Psi$.
 \end{thm}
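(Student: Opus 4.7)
My plan is to adapt the scale-by-scale construction of Haggstrom--Meester~\cite{HaggstromMeester} (originally for $\Z^d$) to the virtually nilpotent setting. I would build weights so that geodesics in the FPP metric $T$ are ``channeled'' along carefully-designed favored paths whose geometry approximates $d_\Psi$. Using residual finiteness of $\Gamma$, I would produce a descending chain $\Gamma \supset \Lambda_1 \supset \Lambda_2 \supset \cdots$ of finite-index normal subgroups of $\Gamma$ contained in $N$, with Cayley diameter of $\Gamma/\Lambda_k$ of order $2^k$. Independent uniformly-random representatives $u_k \in \Gamma/\Lambda_k$ then give a $\Gamma$-stationary random partition of $\Gamma$ into $\Lambda_k$-cosets (``tiles''); within each tile a deterministic template would assign weights so that the cheapest path across the tile has length approximately matching the $d_\Psi$-distance across the tile, with other paths penalized. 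Superposing scale-$k$ templates together with a small constant baseline $\epsilon_0 > 0$ (ensuring $T \ge \epsilon_0 d$) and bounding the sum of their $L^{\infty}$ norms by some $C$ (ensuring $T \le C d$) yields the final weight $w$.

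Conjugation-invariance of $\Psi$ is precisely the condition that allows the template to be chosen coherently across tiles: the $\Gamma$-action permutes tiles through the finite quotient $\Gamma/N$, which acts on $\g^{ab}$ by conjugation, and $\Gamma$-stationarity of the weights requires the template to be equivariant under this action --- which holds if and only if $\Psi$ is $\Gamma/N$-invariant. Concretely, one would first embed $N \hookrightarrow G_\infty$ and use $d_\Psi$ on $G_\infty$ as the master blueprint for the template, then extend to $\Gamma$ via a fixed transversal for $\Gamma/N$; the conjugation-invariance of $\Psi$ guarantees that the choice of transversal does not affect the template's cost structure beyond tolerable error. Ergodicity of the resulting process can be ensured either by a standard ergodic decomposition argument (if a given realization fails ergodicity, measurable selection produces an ergodic component with the same limit norm) or by direct verification using the independence of shifts across scales.

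The main technical obstacle I anticipate is verifying that the Pansu limit of $\E T$ is \emph{exactly} $\Psi$ rather than merely comparable to it. At each scale $k$, the template approximates $d_\Psi$ on a tile only up to discretization error, and these errors must be made summable in $k$. The essential input is Pansu's theorem~\cite{Pansu}, giving $(\Gamma, d/n) \to (G_\infty, d_{\|\cdot\|})$ in pointed Gromov--Hausdorff sense, which means that at scale $2^k$ the Cayley graph is geometrically close to a rescaled copy of $G_\infty$; this should allow $d_\Psi$ to be realized on a scale-$2^k$ tile with error $o(2^k)$. Moreover, since $T$ is defined by an infimum over paths rather than a sum, scale-wise contributions of the templates do not literally add: one must show that cheap paths at scale $2^k$ actually get used by the global infimum without being short-circuited by the baseline or by lower-scale templates. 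Making this precise will require a subadditive-ergodic input in the spirit of Cantrell--Furman~\cite{CantrellFurman}, together with a careful bookkeeping of the ``channel structure'' that forces $T$-geodesics at large scale to follow the designed $d_\Psi$-geodesics.
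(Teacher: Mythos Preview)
Your plan diverges sharply from the paper's construction, and in its current form it has a structural gap rather than just missing details.

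The paper does \emph{not} use residual finiteness, descending chains of subgroups, or a tile-by-tile template. It instead lifts the Haggstrom--Meester ``highway'' construction through the quotient $\Gamma \to \Gamma/\widetilde{[N,N]}$, where $\widetilde{[N,N]} = \ker(N \to N^{ab}_{free})$. This quotient is a finite extension of $N^{ab}_{free} \cong \Z^d$, described concretely as $Q \times N^{ab}_{free}$ with a twisted multiplication involving the conjugation action $\phi$ and a bounded cocycle $\eta$. For each $b_n$ in a countable dense subset of $\partial B_\Psi$, the paper builds a single ``coarsely monotone'' path $\gamma_n$ in $E$ of length $\sim 2^n$ whose projected displacement tracks the line through $b_n$ (Lemma~\ref{coarsepaths}). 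Weights are then assigned by an i.i.d.\ sprinkling: at each $x\in\Gamma$ an independent variable $Y_x$ selects a scale $n$ with probability $3^{-n}$, and the translate $x\gamma_n$ is given calibrated fast weights, with conflicts resolved by ``largest $n$ wins''. The lower bound (no path beats $\Psi$) comes from a convexity lemma: every fast segment has displacement-to-time ratio on $\partial B$, slow edges are made expensive enough to absorb all error terms, and conjugation-invariance of $\Psi$ is used precisely because concatenation of displacements picks up the $\phi$-twist (Proposition~\ref{noncommdisplacement}). The upper bound is an explicit path through the center of $N$ to the nearest highway in direction $b_n$.

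Your scheme, by contrast, defers the entire difficulty to the phrase ``a deterministic template would assign weights so that the cheapest path across the tile has length approximately matching the $d_\Psi$-distance''. Designing such a template on a single large tile is already equivalent to the theorem you are trying to prove; it is not a subroutine one can invoke. Moreover, your superposition mechanism --- summing template contributions plus a baseline --- is incompatible with FPP, where $T$ is an infimum over paths: adding a scale-$k$ layer cannot create a fast channel, it can only make every edge more expensive, and taking a minimum across scales would let geodesics cherry-pick cheap edges from incompatible templates. The paper avoids this by having each edge's weight determined by exactly one winning configuration. Finally, there is no group embedding $N \hookrightarrow G_\infty$ to use as a blueprint: $N$ sits in its Mal'cev completion $G$, and $G_\infty$ is the associated \emph{graded} group, related to $G$ only through the non-homomorphic maps $\scl_t$. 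The paper sidesteps $G_\infty$ entirely in the weight construction and works with the genuinely abelian target $N^{ab}_{free}$.
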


\begin{proof}[Proof of Theorem \ref{mainthm} given Theorem \ref{reducedthm}]
Let $d_{\Phi}$ be a Carnot-Carath\'eodory metric on $G_{\infty}$ and suppose that the associated norm $\Phi$ on $\g^{ab}$
is conjugation-invariant. Given any Cayley graph of $\Gamma$, use Theorem \ref{reducedthm} to choose ergodic stationary weights $w$ 
such that the resulting $T$ is bi-Lipschitz to $d$ and such that the norm on $\g^{ab}$ associated to the metric $\E T$ on $\Gamma$
is equal to $\Phi$.
Applying Theorem \ref{cantrellfurmanthm} to $w$ then gives
\[
   \left(\Gamma, \frac{1}{n} T\right) \tendsto{n}{\infty} (G_{\infty}, d_{\Phi})
\]
in the sense of pointed Gromov-Hausdorff convergence, as desired.
\end{proof}

Thus, our main theorem is reduced to the problem of constructing stationary weights which induce a given norm $\Psi$ on $\g^{ab}$.
Haggstrom and Meester ~\cite{HaggstromMeester} give a construction for inducing the correct norms in the $\Z^d$ case, and
in the simplest case, the core
of our work is 
``lifting'' the Haggstrom-Meester construction from the abelianization of the finitely generated nilpotent group to the group itself, and then 
checking that
everything goes through.
Therefore, to give an idea of the construction
we start by proving Theorem \ref{reducedthm} in this simplest case---namely, the case that $\Gamma=N$ is a torsion-free nilpotent group
with torsion-free abelianization, and the generating set $S$ projects to the standard generating set of
$\Z^d \cong N^{ab} = \Gamma^{ab}$. As mentioned above, in this case conjugation-invariance does not play a role, and \emph{any}
norm $\Psi$ is attainable.
This is done in the next two sections.  

In Section \ref{restrictions}, we discuss the restriction of conjugation-invariance and the
nontrivial subtleties that arise when treating the general \emph{virtually} nilpotent case.
The rest of the main body of the paper is then dedicated to proving Theorem \ref{reducedthm} in full generality.
In particular, this involves understanding a \emph{virtually} abelian ``almost-abelianization'' of $\Gamma$, and
then again ``lifting'' a construction from the ``almost-abelianization'' to $\Gamma$.
In order to accommodate all possible Cayley graphs
as well as the slightly non-abelian nature of the ``almost-abelianization'',
the general construction has a ``coarser'' flavor than the original construction and requires some non-trivial modifications.

Appendix \ref{liegroupconstructions} provides more background on the associated graded nilpotent Lie group and Carnot-Carath\'eodory metrics.
Appendix \ref{grouptheory} shows that the construction at the end of Section \ref{defnsandbackground} coincides with the construction
in Cantrell-Furman's theorem ~\cite{CantrellFurman}.
In Appendix \ref{fillgap}, we review the notion of Gromov-Hausdorff convergence and we also provide a missing step in Cantrell-Furman's theorem so that it guarantees Gromov-Hausdorff convergence.

 \section{Construction of the edge weights when $\Gamma$ is nilpotent and torsion-free with torsion free abelianization} \label{easyconstruction}
 Assume that $\Gamma = N$ is a finitely generated torsion-free nilpotent group with torsion-free abelianization.
 Moreover, assume that $S=\{s_1,...,s_d\}$ is such that the image of $S$ under the quotient map $\Gamma \to \Gamma^{ab}$
 is a basis, and we choose an isomorphism $\Gamma^{ab} \cong \Z^d$ such that $S$ maps to the standard basis for $\Z^d$. In this 
 and the next section we prove the result of Theorem \ref{reducedthm}\footnote{Technically we prove a weaker version of Theorem
 \ref{reducedthm} which still implies the conclusion of Theorem \ref{mainthm}; see Remark \ref{bilipschitzcaveat} below.}
  under these extra assumptions, which then implies the result
 of Theorem \ref{mainthm} under these extra assumptions, as shown above.

 First, let us note that since $\Gamma$ is nilpotent, we cannot have $d=0$,
 and if $d=1$ then in fact $\Gamma \cong \Z$. (For this latter fact, let $a \in \Gamma$ be such that 
 $\langle a \rangle [\Gamma, \Gamma] = \Gamma$; then also $\langle a \rangle = \Gamma$ by Theorem 16.2.5
 in ~\cite{KarMer}). It is easy to induce any norm on $\Z$ no matter what the finite generating set is using deterministic weights,
 so from here on we assume $d \ge 2$. 
 
 We are given a norm $\Phi$ on $\Gamma^{ab} \otimes \R \cong \R^d$. We want to find weights $w:E \to \R_{\ge 0}$ for $\Gamma$
 such that the subadditive function $\tilde{T}$ on $\Gamma^{ab} \cong \Z^d$ induced by $\E T$ via $\Gamma \to \Gamma^{ab}$ is
 asymptotically equivalent to $\Phi$. Let $B \subset \R^d \cong \Gamma^{ab} \otimes \R$ be the unit ball of $\Phi$. 
 Note that $B$ is a compact, convex, and symmetric (i.e. $x \in B$ implies $-x \in B$) subset
 of $\R^d$ which contains an open neighborhood of $0$. The construction below is a ``lift'' of the construction of 
 Haggstrom and Meester ~\cite{HaggstromMeester}.
 
 Let $\{b_n\}_{n=1}^{\infty}$ be a countable dense subset of the boundary of $B \subset \R^d$. For each $n \ge 1$,
 let $z_n$ be a point in $\Z^d$ with minimum possible distance to $2^n \frac{b_n}{\| b_n \|_2} \in \R^d$, where $\| \cdot \|_2$
 is the standard Euclidean norm on $\R^d$. We recall the result from \cite{HaggstromMeester} that we need in the proposition below.
 \begin{prop} \label{pathsprop}
 There is a constant $C_0$ depending only on $d$ such that, for any $n \ge 1$, $u \in \R^d$, if $z_n$ is a point in $\Z^d$ with minimal
 Euclidean to $2^n u$,
 there exists a directed edge path $\gamma_n$ from $0$ to $z_n$ in the standard Cayley graph $\Z^d$ with the following properties:
 \begin{enumerate}
\item Any point on $\gamma_n$ is Euclidean distance at most $C_0$ from some point on the line through $0$ and $b_n$ in $\R^d$
\item If a subpath of $\gamma_n$ starts at $x \in \R^d$ and ends at $y \in \R^d$, then $\pair{y-x}{b_n} > 0$.
\item The number of edges in $\gamma_n$ is the least possible, i.e. $\sum_{i=1}^d |\pi_i (z_n) |$, where $\pi_i: \R^d \to \R$
is projection onto the $i^{th}$ coordinate.
\end{enumerate}
 \end{prop}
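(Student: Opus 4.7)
The plan is to construct a ``staircase'' lattice path that approximates the straight line segment from $0$ to $z_n$. Write $u = b_n / \|b_n\|_2$ and let $\ell$ denote the line through $0$ and $b_n$ (equivalently, the line through $0$ and $u$). Since $z_n$ is the nearest lattice point to $2^n u$, we have $\|z_n - 2^n u\|_\infty \le 1/2$, hence $\|z_n - 2^n u\|_2 \le \sqrt{d}/2$; so $z_n$ lies within Euclidean distance $\sqrt{d}/2$ of $\ell$, and by linearity the same bound holds for every point of the segment $[0, z_n]$.

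Set $\sigma_i = \sgn \pi_i(z_n)$ (with the convention $\sigma_i = 0$ when $\pi_i(z_n) = 0$) and $N = \sum_i |\pi_i(z_n)|$. Any lattice path that uses exactly $|\pi_i(z_n)|$ edges in the direction $\sigma_i e_i$ for each $i$, in some order, has exactly $N$ edges, which gives property 3 immediately. For property 2, I would verify that $\pi_i(z_n) \ne 0$ forces $(b_n)_i$ to be nonzero and of the same sign: if $(b_n)_i = 0$ then the nearest integer to $2^n u_i = 0$ is zero, while if $(b_n)_i \ne 0$ any nonzero nearest integer to $2^n u_i$ must agree in sign with $(b_n)_i$. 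Thus every step $\sigma_i e_i$ satisfies $\pair{\sigma_i e_i}{b_n} = \sigma_i (b_n)_i > 0$, and any nonempty subpath $y - x$, being a nonzero sum of such steps, has $\pair{y-x}{b_n} > 0$.

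To achieve property 1, I would pin down the order of the steps via a greedy Bresenham-style rule. Parametrize the ``ideal'' continuous position at time $k$ by $\psi(k) = \frac{k}{N} z_n$ for $k = 0, 1, \ldots, N$. Starting from $x_0 = 0$, at step $k$ choose $x_{k+1}$ by moving in a direction $\sigma_j e_j$---subject to not yet having used the full quota $|\pi_j(z_n)|$ in coordinate $j$---that minimizes $\|x_{k+1} - \psi(k+1)\|_\infty$. The central claim is that this rule maintains $\|x_k - \psi(k)\|_\infty \le C(d)$ uniformly in $k$ and $n$; combining this with the preceding bound on the distance from $\psi(k)$ to $\ell$ yields a Euclidean distance bound of the form $C_0 = C(d)\sqrt{d} + \sqrt{d}/2$, depending only on $d$.

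I expect the bounded-deviation claim for the greedy construction to be the main technical obstacle. It is a standard but non-trivial fact about Bresenham-type lattice approximations of a line, and its verification requires a careful induction on $k$ that tracks the signed per-coordinate discrepancies $(x_k - \psi(k))_i$, showing that the greedy step always corrects the worst-lagging coordinate before it can drift past a threshold depending only on $d$. The rest of the proof then consists of the elementary sign and length computations sketched above.
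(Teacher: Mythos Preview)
The paper does not actually prove this proposition; it simply quotes it as a result from H\"aggstr\"om--Meester~\cite{HaggstromMeester}. So there is no proof in the paper to compare against, and your sketch is an attempt to supply one.

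Your arguments for properties (2) and (3) are correct and complete: once you observe that every edge used is of the form $\sigma_i e_i$ with $\sigma_i(b_n)_i>0$, both the minimal length and the strict monotonicity of $\pair{\cdot}{b_n}$ along every nonempty subpath follow immediately.

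For property (1), the greedy/Bresenham construction you describe is plausible, but as you yourself flag, the inductive control of $\|x_k-\psi(k)\|_\infty$ is the entire content of the proof and you have not carried it out. A cleaner route that avoids this difficulty altogether: set $a_i=|\pi_i(z_n)|$, $N=\sum_i a_i$, and define the ``rounded'' points $y_k\in\Z^d$ by $y_k^{(i)}=\sigma_i\lfloor k a_i/N\rfloor$, so that $\|y_k-\psi(k)\|_\infty\le 1$ automatically. Since $a_i\le N$, each coordinate of $y_{k+1}-y_k$ is $0$ or $\sigma_i$, and $\sum_k\|y_{k+1}-y_k\|_1=\sum_i a_i=N$; thus the sequence $y_0,\dots,y_N$ can be refined into a genuine nearest-neighbour path of total length $N$ by inserting at most $d$ unit steps between $y_k$ and $y_{k+1}$ whenever several coordinates jump at once. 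Every vertex of the refined path is within $\ell^\infty$-distance $1$ of some $y_k$, hence within $\ell^\infty$-distance $2$ of some $\psi(k)\in[0,z_n]$, hence within Euclidean distance $2\sqrt{d}+\sqrt{d}/2$ of the line $\ell$. This gives property (1) with an explicit $C_0$ and no inductive bookkeeping.
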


Next, we lift each $\gamma_n$ to an edge path $\bar{\gamma}_n$ in the Cayley graph of $\Gamma$. The
quotient map $\Gamma \to \Gamma^{ab} \cong \Z^d$ induces a covering map of Cayley graphs, 
so just let $\bar{\gamma}_n$ be the unique lift of $\gamma_n$ starting at
$1 \in \Gamma$. Equivalently, paths in Cayley graphs starting at the identity are naturally in correspondence with words in the
generating sets. The path $\gamma_n$ then corresponds to a word in $e_1,...,e_d$, which we lift to a word in $s_1,...,s_d$,
which corresponds to a path $\bar{\gamma}_n$ in our Cayley graph for $\Gamma$.

For each $n \ge 1$, set $E_n \subset E$ to be the set of edges of the Cayley graph of $\Gamma$ which share at least one
vertex in common with an edge of $\bar{\gamma}_n$. Note that $|E_n| \lesssim 2^n$, where the implied constant depends on $|S|$
but is independent of $n$.

Now we define a configuration of edge weights $\eta_n : E_n \to \R_+$.
First choose $h>0$ sufficiently small so that $\{ x \in \R^d : \|x\|_2 \le h \} \subset B$.
Next, choose $K< \infty$ sufficiently large so that $ \frac{ 1 }{ K - 2 h^{-1} \cdot C_0 } \le h$ and $K \ge h^{-1}$.
We then define
\begin{equation*}
  \eta_n(f) = \begin{cases}
                        \frac{|\pi_i(b_n)|}{\|b_n\|_2^2}   & f \in \bar{\gamma}_n, f \mbox{ labeled by } s_i,\\
                        K,                                              & \text{otherwise}
                     \end{cases}
\end{equation*}
where $\pi_i$ is again the projection onto the $i^{th}$ coordinate. If $x \in \Gamma$, then we can also define the translated configuration $T_x \eta_n : xE_n \to \R_+$
by $T_x \eta_n(f) = \eta_n(x^{-1} f)$

Let $(Y_x)_{x \in \Gamma}$ and $(Z_x)_{x \in \Gamma}$ be collections of i.i.d. random variables with distributions that satisfy 
$\Prob(Y_x = 0) = \frac{1}{2}, \Prob(Y_x = n) = 3^{-n}$ for $n\geq 1$, and
$Z_x$ is uniformly distributed on $[0,1]$. We also assume that the collections $(Y_x)_{x \in \Gamma}$, $(Z_x)_{x\in \Gamma}$ are independent.

Finally, the weights $w:E \to \R_+$ are defined as follows: if $Y_x = n > 0$, assign the edges in $x E_n$ according to $T_x \eta_n$.
If two configurations compete for the same edge, then the configuration with the larger value of $n$ wins; if both configurations
have the same value of $n$, then the one with the larger value of $Z_x$ wins. Any remaining edges with no assigned weight are
given weight $K$.

More formally: for each $f \in E$, let $X_f := \{ x \in \Gamma : f \in xE_{Y_x} \}$ be the set of starting points of configurations competing for
the edge $f$. Let $n_f := \max \{Y_x : x \in X_f \}$ be the largest value of $n$ among these competing configurations, 
and let $x_f \in \Gamma$ be the element of $X_f$ which attains the maximum (that is, $Y_{x_f} = n_f$) and has the largest
value of $Z_x$ among such elements, that is, $Z_{x_f} = \max \{Z_x : x \in X_f, Y_x = n_f\}$. Then
\begin{equation*}
  w(f) = \begin{cases}
              T_{x_f} \eta_{n_f} (f) & X_f \ne \emptyset \\
              K                               & \text{otherwise}.
            \end{cases}
\end{equation*}

Note that $x_f$ is a.s. unique since all the $Z_x$ are uniform, and it exists since $|X_f| < \infty$ a.s. by the calculation
\[
\E |X_f| = \sum_{x \in \Gamma} \Prob(f \in xE_{Y_x}) 
= \sum_{n=1}^{\infty} \sum_{x \in \Gamma} \ind_{\{f \in xE_n\}} \Prob(Y_x = n)
\le \sum_{n=1}^{\infty} |E_n| 3^{-n} \lesssim \left( \sum_{n=1}^{\infty} 2^n \cdot 3^{-n} \right) < \infty.
\]
Here we used that $\Gamma$ acts freely on $E$ and so $\#\{x \in \Gamma : x^{-1} f \in E_n\} \le |E_n|$.
Hence the weights are well-defined. They are also evidently stationary and a.s. bounded above by $K < \infty$.
The weights are also ergodic, since we can take our probability space $\Omega$ to be $(\mathbb{N} \times [0,1])^{\Gamma}$,
corresponding to the outcomes of $Y_x$ and $Z_x$, which is clearly ergodic as a direct product of probability spaces over $\Gamma$.
\begin{rmk} \label{bilipschitzcaveat}
These weights do \emph{not} give a metric which is bi-Lipschitz to a word metric, since $\pi_i(b_n)$ will typically cluster around $0$ and a uniform lower bound on the edge weights is not available. 
\end{rmk}

By the remark above, this construction does not suffice to prove Theorem \ref{reducedthm}. There are two ways around this.  
In Section \ref{generalconstruction}, we provide a different construction in the general virtually nilpotent case which is bi-Lipschitz to the word metric, and implies Theorem \ref{reducedthm} as stated.
Secondly, the weights constructed above \emph{do} satisfy a weaker condition which one might call ``bi-Lipschitz away from the diagonal.''
That is, we have a uniform upper bound $K$ on the edge weights, 
and there exists some constants $0<C<\infty$ and $k>0$ such that
for any $x,y \in \Gamma$ with $d(x,y) \ge C$, we have \[T(x,y) \ge k d(x,y)\] almost surely.
This fact follows fairly easily from Lemma \ref{euclideanmonotonicity} proven in Section \ref{coarsepathssection} below.

Under this weaker assumption, the proof of Theorem \ref{cantrellfurmanthm} given in ~\cite{CantrellFurman} goes through unchanged.
Thus, although we prove a weaker version of Theorem \ref{reducedthm}
in the next section, namely Theorem \ref{reducedthm} with the conclusion ``$T$ is bi-Lipschtiz to $d$'' replaced by 
the conclusion ``$T$ is bi-Lipschitz to $d$ away from the diagonal'', we can then use the stronger version of Theorem \ref{cantrellfurmanthm}
to still conclude the result of Theorem \ref{mainthm} in this restricted setting.

\section{Proof of Theorem \ref{reducedthm} when $\Gamma$ is nilpotent and torsion-free with torsion free abelianization}
Using the weights $w$ defined in the previous section, let $T$ be the metric associated to $w$ as defined in Section \ref{defnsandbackground}.
Let $\tilde{T}$ be the subadditive function on $\Gamma^{ab}$ induced by $\E T$ via the abelianization map $\Gamma \to \Gamma^{ab}$
as above. In order to prove our version of Theorem \ref{reducedthm}, all that remains is to show that
as $x \in \Gamma^{ab}$ tends to infinity,
\[ \tilde{T}(x) - \Phi (x) = o(x), \]
where in the little $o$ notation we may use any norm on $\R^d$ to measure $x$.
We use the following proposition which is used in ~\cite{HaggstromMeester} (where they take $Q=[-1/2,+1/2]^d \subset \R^d$,
but the exact form that $Q$ takes does not matter):
\begin{prop} \label{normreductionprop}
To show that $\tilde{T}(x) - \Phi(x) = o(x)$, it suffices to show the following
\begin{enumerate}
\item For all $y \notin B$, $y \notin \frac{1}{t}\bar{B}(t)$ for all sufficiently large $t$. 
\item For all $y$ in the interior of $B$, $y \in \frac{1}{t}\bar{B}(t)$ for all sufficiently large $t$. 
\end{enumerate}
Here we define 
\[ \bar{B}(t) := \bigcup_{\{x \in \Gamma^{ab} : \tilde{T}(x) \le t \}} x + Q, \]
where $Q \subset \g^{ab}$ is a compact connected neighborhood of $0$ such that the quotient map $Q \to \g^{ab}/\Gamma^{ab}$
is surjective.
\end{prop}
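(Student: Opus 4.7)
The plan is to identify the target norm $\Phi$ with the limiting seminorm of $\tilde T$. By the general subadditive-function fact recalled in Section \ref{defnsandbackground}, the subadditive function $\tilde T$ on $\Gamma^{ab} \cong \Z^d$ is asymptotically equivalent to a unique seminorm $\Psi$ on $\R^d$, in the sense that $\tilde T(x) - \Psi(x) = o(\|x\|)$. Since two seminorms whose difference is $o(\|x\|)$ must coincide, the desired conclusion $\tilde T(x) - \Phi(x) = o(x)$ is equivalent to $\Psi = \Phi$, which in turn amounts to showing that the closed unit balls $\{y : \Psi(y) \le 1\}$ and $B = \{y : \Phi(y) \le 1\}$ agree. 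The two hypotheses (1) and (2) will be used to establish the two inclusions separately.

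For $\Psi \le \Phi$ I would use hypothesis (2). Fix $y$ with $\Phi(y) < 1$; then by (2), for every integer $n \ge n_0(y)$ there exists $x_n \in \Gamma^{ab}$ with $\tilde T(x_n) \le n$ and $ny - x_n \in Q$. Since $Q$ is compact, $x_n/n \to y$. Dividing $\tilde T(x_n) \le n$ by $n$ and applying $\tilde T(x_n) = \Psi(x_n) + o(\|x_n\|)$ together with the continuity and homogeneity of the seminorm $\Psi$ gives $\Psi(y) \le 1$. Rescaling $y \mapsto y/(\Phi(y) + \varepsilon)$ and sending $\varepsilon \downarrow 0$ upgrades this to $\Psi \le \Phi$ pointwise.

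For $\Phi \le \Psi$ I would use hypothesis (1). For a rational vector $y = z/m$ with $z \in \Z^d$, $m \in \N$, and $\Psi(y) < 1$, the defining property of $\Psi$ gives $\tilde T(nz)/n \to \Psi(z) = m\Psi(y) < m$, so $\tilde T(nz) \le nm$ for all large $n$; since $0 \in Q$, this places $y = (nz)/(nm) \in \frac{1}{nm}\bar B(nm)$ for arbitrarily large values of $nm$, and the contrapositive of (1) then forces $y \in B$, i.e.\ $\Phi(y) \le 1$. Continuity of $\Psi$ and closedness of $B$ extend this to all real $y$ with $\Psi(y) < 1$ by rational approximation, and the same rescaling trick yields $\Phi \le \Psi$ pointwise. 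Combining the two inequalities gives $\Psi = \Phi$.

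The main technical nuisance is controlling the offset $q \in Q$ that appears in the definition of $\bar B$: because $Q$ is compact and the relevant scales tend to infinity, this offset contributes only $O(1)$ to quantities of order $n$ and is absorbed cleanly by the error term in $\tilde T(x) = \Psi(x) + o(\|x\|)$. Once $\Psi = \Phi$ is established, the defining property of $\Psi$ immediately delivers $\tilde T(x) - \Phi(x) = o(x)$, completing the proof.
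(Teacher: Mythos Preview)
The paper does not supply its own proof of this proposition: it is simply quoted as a fact ``used in~\cite{HaggstromMeester}'' and then applied. So there is nothing in the paper to compare against line by line.

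Your argument is correct and is essentially the natural one. Reducing the asymptotic statement $\tilde T(x)-\Phi(x)=o(x)$ to the equality $\Psi=\Phi$ of seminorms (where $\Psi$ is the seminorm canonically attached to the subadditive $\tilde T$, as in Section~\ref{defnsandbackground}) is exactly the right move, and your two inclusions are handled cleanly. A couple of minor remarks: in the $\Phi\le\Psi$ direction you only produce membership $y\in\frac{1}{t}\bar B(t)$ along the sequence $t=nm$, but the contrapositive of (1) requires only that this happen for \emph{arbitrarily large} $t$, so this suffices; and the case $\Psi(y)=0$ with $y\ne 0$ is implicitly covered, since then $\Phi(\lambda y)\le 1$ for all $\lambda>0$ forces $\Phi(y)=0$, contradicting that $\Phi$ is a norm. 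With those points noted, the proof is complete.
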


First, we prove (1). 
To do this, we must establish some facts about the relationship
between the $T$-lengths of paths in $E$ and their ``displacements'' in $\Gamma^{ab}$.
In proving these we will repeatedly use the following easily
verifiable lemma from ~\cite{HaggstromMeester}:
\begin{lemma} \label{convexlemma}
   Let $B$ be a convex subset of $\R^d$ and let $x_1,...,x_m \in \R^d$, $\alpha_1,...,\alpha_m \ge 0$ be such that
   each $\alpha_i^{-1} x_i \in B$. Then $\frac{x_1 + \cdots + x_n}{\alpha_1 + \cdots + \alpha_n} \in B$.
\end{lemma}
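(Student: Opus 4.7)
The plan is to reduce the claim directly to the definition of convexity of $B$, by rewriting the quotient as an ordinary convex combination of the prescribed points $\alpha_i^{-1} x_i$. Set $S := \alpha_1 + \cdots + \alpha_m$. The hypothesis that each $\alpha_i^{-1} x_i$ makes sense (and lies in $B$) is to be interpreted for those $i$ with $\alpha_i > 0$; for indices with $\alpha_i = 0$ we must have $x_i = 0$ for the statement to be meaningful, so such terms contribute nothing to the numerator. In particular we may assume $S > 0$, as otherwise all $x_i = 0$ and there is nothing to prove.

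Writing $y_i := \alpha_i^{-1} x_i \in B$ for each $i$ with $\alpha_i > 0$, we have $x_i = \alpha_i y_i$, so
\[
\frac{x_1 + \cdots + x_m}{\alpha_1 + \cdots + \alpha_m} \;=\; \sum_{i \,:\, \alpha_i > 0} \frac{\alpha_i}{S}\, y_i.
\]
The coefficients $\alpha_i/S$ are nonnegative and sum to $1$, so the right-hand side is a genuine convex combination of points of $B$, and convexity of $B$ immediately places it in $B$.

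There is essentially no obstacle: the entire content of the lemma is the elementary observation that homogenizing the weights of a ``weighted sum of points each normalized to lie in $B$'' reproduces a convex combination. The only mild bookkeeping issue is handling the degenerate case $\alpha_i = 0$, which is disposed of by the convention above, and the case $S = 0$, which is vacuous. Accordingly the argument is a two-line verification and requires no machinery beyond the definition of a convex set.
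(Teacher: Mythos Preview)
Your proof is correct and is exactly the standard verification the paper has in mind; the paper itself does not give a proof but simply calls the lemma ``easily verifiable'' and cites \cite{HaggstromMeester}. Your handling of the degenerate case $\alpha_i = 0$ is a sensible convention, and in the paper's applications all the $\alpha_i$ are strictly positive anyway.
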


Let us call an edge $f \in E$ ``slow'' if $w(f) = K$ and ``fast'' otherwise. Let us also call an edge path in $E$ ``fast'' if all its edges are fast
and ``slow'' if all its edges are slow. For an edge path $\gamma$ in $E$ from $x \in \Gamma$ to $y \in \Gamma$ denote by $D(\gamma)$
its ``displacement'' $y^{ab} - x^{ab} \in \R^d$. Note that displacement is preserved by left translations:
\[ D(z\gamma) = (zy)^{ab} - (zx)^{ab} = (z^{ab} + y^{ab}) - (z^{ab} + x^{ab}) = y^{ab} - x^{ab} = D(\gamma). \] 

Let us first consider fast paths $\gamma$. Note that by construction of the weights, each fast path is a subpath of $x\bar{\gamma}_n$ for some
$x \in \Gamma, n \ge 1$ (because of the ``shell'' of slow edges surrounding each fast $x \bar{\gamma_n}$). 
We can then decompose $D(\gamma)$ as
\[ 
   D(\gamma) = D_{\parallel}(\gamma) + D_{\perp}(\gamma), 
\]
where $D_{\parallel}$ is the orthogonal projection of $D(\gamma)$ onto the line passing through $0$ and $b_n$ and $D_{\perp}(\gamma)$
is orthogonal to that line. Note that the construction of the edge weights guarantees precisely that if $f$ is a fast edge in 
$x\bar{\gamma}_n$ labeled by $s_i$ then
\[ \frac{D_{\parallel}(f)}{T(f)} = \frac{ \pair{ \pm e_i }{ \frac{b_n}{\|b_n\|_2} } \frac{b_n}{\| b_n \|_2 } }{ \frac{ |\pi_i(b_n)| }{ \|b_n\|_2^2 }}
= \pm b_n \in B. \]
Then by Lemma \ref{convexlemma} we have
\[ \frac{ D_{\parallel}(\gamma) }{ T(\gamma) } = \frac{ \sum_{f \in \gamma} D_{\parallel}(f) }{ \sum_{f \in \gamma} T(f) } \in B. \]
We also know by Proposition \ref{pathsprop} that 
\[ 
   \|D_{\perp}(\gamma)\|_2 \le 2 C_0, 
\]
and hence
\[ 
   \frac{ D_{\perp}(\gamma) }{ h^{-1} \cdot 2 C_0 } \in \{ x \in \R^d : \|x\|_2 \le h \} \subset B. 
\]
So again by Lemma \ref{convexlemma},
\[ 
   \frac{ D(\gamma) }{ T(\gamma) + 2 h^{-1} C_0 } = \frac{ D_{\parallel}(\gamma) + D_{\perp}(\gamma) }{ T(\gamma) + h^{-1} \cdot 2C_0 } \in B.
\]

On the other hand, if $f$ is a slow edge, then by our choice of $K$
\[ 
   \frac{ D(f) }{ T(f) - 2 h^{-1} C_0 } \in \{ x \in \R^d : \|x\|_2 \le h \} \subset B,
\]
and so for a slow path $\gamma$, by Lemma \ref{convexlemma} we have
\[
   \frac{ D(\gamma) }{ T(\gamma) - 2 |\gamma| h^{-1} C_0} \in B.
\]

Now, a general path in $E$ is an alternating concatenation of fast and slow paths. That is,
$\gamma = \gamma_f^0 \gamma_s^1 \cdots \gamma_s^n \gamma_f^n$, where the $\gamma_f^i$ are fast, the $\gamma_s^i$ are slow,
and we may take $\gamma_f^0$ or $\gamma_f^n$ to be empty, but all the $\gamma_s^i$ consist of at least one edge. Then by 
our previous arguments and Lemma \ref{convexlemma} we have
\[
   \frac{\sum_{i=0}^n D(\gamma_i^f) + \sum_{i=1}^n D(\gamma_i^s)}
   {\sum_{i=0}^n (T(\gamma_i^f) + 2h^{-1} C_0 ) + \sum_{i=1}^n (T(\gamma_i^s) - 2 |\gamma_i^s| h^{-1} C_0)} \in B.
\]
The numerator in the above expression is $D(\gamma)$, and the denominator is at most $T(\gamma) + 2 h^{-1} C_0$,
so we have
\[
   \frac{D(\gamma)}{T(\gamma) + 2h^{-1} C_0} \in B
\]
for any path $\gamma$ in $E$.

Finally, let $y \notin B$. Since $B$ is closed, there is some $\epsilon > 0$ such that for any $c>0$, $cB(y,\epsilon) \cap B \ne \emptyset$
implies that $\frac{1}{c} > 1 + \epsilon$.
Now for any $t > 0$ let $z \in \Gamma$ be such that $ty \in z^{ab} + Q$, where $Q$ is the fixed compact set in 
Proposition \ref{normreductionprop}. If we choose $\gamma$ to be a $T$-minimal path from $1$ to $z$ in $\Gamma$, by our above arguments
we have that
\[
   \frac{ z^{ab} }{ T(\gamma) + 2 h^{-1} C_0 } = 
   \frac{  t[ y - \frac{1}{t}(z^{ab} - ty) ] }{ T(1,z) + 2 h^{-1} C_0 } \in B.
\]
Therefore, whenever $\frac{ \diam(Q) }{ t } < \epsilon$, we have 
$\frac{1}{t}\|z^{ab} - ty\|_2 < \epsilon$ and hence
\[ \frac{ T(1,z) + 2h^{-1} C_0}{t} > 1 + \epsilon; \]
and so whenever also $\frac{2 h^{-1} C_0 }{t} < \epsilon/2$, we have
\[ \frac{T(1,z)}{t} > 1 + \frac{\epsilon}{2}, \]
and then taking expectation gives
\[ \frac{ \E T(1,z) }{t} > 1 + \frac{\epsilon}{2}; \]
since this argument did not depend on our choice of $z$, we conclude that, for all $t$ sufficiently large,
$\tilde{T}(z^{ab}) > t(1 + \frac{\epsilon}{2})$ whenever $ty \in z^{ab} + Q$, and hence 
\[ y \notin \frac{\bar{B}(t)}{t}. \]

Now we prove (2).

It is sufficient to prove that for every $\epsilon > 0$, for all but finitely many $n$,
\[ \frac{ \|b_n\|_2 \tilde{T}(z_n) }{ 2^n } < 1 + \epsilon. \]

Fix $\epsilon > 0$. We give an upper bound on the $\tilde{T}$-distance from $0$ to $z_n$ 
by constructing a path $\gamma$ from $1$ to a lift of $z_n$ in $\Gamma$. The lift we choose is the endpoint of the path $\bar{\gamma}_n$,
which we denote by $\bar{z}_n$. Note that although the path we construct is random, the endpoints $1$ and $\bar{z}_n$ are not.

Denote by $Z$ the center of $\Gamma$, and fix a total ordering $<$ on $Z$ such that if $d(1,x_0) < d(1,x_1)$, then $x_0 < x_1$
(recall that here $d$ denotes the word metric on $\Gamma$ with respect to $S$).
Then choose $x$ to be the least element of $Z$ with respect to this ordering such that $Y_x = n$.
Note that $x$ is then a well-defined $Z$-valued random variable with minimal distance from $1$, and that
\[
   (x=x_0) \Leftrightarrow (Y_{x_0} = n \mbox{ and } Y_{x_1} \ne n \mbox{ for all } x_1 < x_0).
\]
That is, $x$ is the nearest central starting point of a ``highway'' in the $b_n$ direction.

Now, to construct our path $\gamma$, first, take a path of minimal $d$-length from $1$ to $x$ in $\Gamma$.
Then, travel along $x \bar{\gamma}_n$ (even if some of the edges are overwritten by slow edges) to $x \bar{z}_n$. Finally, travel back to
$x \bar{z}_n x^{-1} = \bar{z}_n$ by traveling backwards along a translate of the path you took from $1$ to $x$. Note that we have used
the fact that $x$ is central to conclude that $x \bar{z}_n x^{-1} = \bar{z}_n$ and in particular that the $d$-distance from $x \bar{z}_n$
to $\bar{z}_n$ is no larger than the $d$-distance from $1$ to $x$.

If $x \bar{\gamma}_n$ was not overwritten by any slow edges, the passage time of the path would be equal to
\[ 
   \sum_{f \in \bar{\gamma}_n} \eta_n(f) = \sum_{f \in \bar{\gamma}_n} \frac{ \pair{D(f)}{ b_n } }{ \|b_n\|_2^2 } 
   = \frac{ \pair{D(\bar{\gamma}_n)}{b_n} }{\|b_n\|_2^2} = \frac{ \pair{z_n}{b_n} }{\|b_n\|_2^2}.
\]
(Here we have used the fact that, by construction, all edges $f$ in $\gamma$ have positive inner product with $b_n$.)
Since $z_n$ is less than distance $\frac{\sqrt{d}}{2}$ from $2^n \frac{b_n}{\|b_n\|}$, the above is bounded above by
\[ 
   \frac{ \pair{2^n \frac{b_n}{\|b_n\|} }{b_n} }{ \|b_n\|_2^2 } + \frac{ \frac{\sqrt{d}}{2} \cdot \| b_n\|_2 }{\|b_n\|_2^2}
   = \frac{ 2^n }{ \|b_n\| } \left(1 + \frac{ \sqrt{d} }{ 2^{n+1} } \right). 
\]
Taking into account the travel from $1$ to $x$ and from $x \bar{z}_n$ to $\bar{z}_n$, as well as the fact that some of the edges
of $x \bar{\gamma}_n$ may be overwritten by slow edges, we have
\begin{equation}
\label{eqn:pathlengthbound}
   \E T(\gamma) \le K \big[ 2 \E d(1,x) + \E \# \{ e \in x \bar{\gamma}_n : e \mbox{ is slow} \} \big]
   + \frac{ 2^n }{ \|b_n\| } \left(1 + \frac{ \sqrt{d} }{ 2^{n+1} } \right).
\end{equation}

To bound the first term, we calculate
\[ 
   \E d(1,x) = \sum_{i=0}^{\infty} \Prob( d(1,x) > i ) = \sum_{i=0}^{\infty} \Prob( Y_{\xi} \ne n \mbox{ for all } \xi \in B_{d}(i) \cap Z ). 
\]
Since we have assumed that $\Gamma \not\cong \Z$, the growth of the center is at least
2-dimensional, that is, we have some
$C > 0$ depending only on $\Gamma$ and $S$ such that
\[ 
   |B_{d}(i) \cap Z| \ge Ci^2 
\]
for all $i \ge 0$. This is proved in Lemma \ref{centergrowth} below, but for now we take it for granted.

Then, since the $Y_\xi$ are iid, we continue the above computation to get
\[ 
   \E d_S(1,x) \le \sum_{i=0}^{\infty} (1 - 3^{-n})^{C i^2} \le 1 + \int_0^{\infty} (1 - 3^{-n})^{C s^2} ds. 
\]
Using the substitution $\sigma =  \left[ \frac{ \ln (1 - 3^{-n}) }{\ln (1 - 3^{-1}) } \right]^{1/2} s$, we get
\[ 
   \int_{0}^{\infty} (1 - 3^{-n})^{Cs^2} ds =  
   \left[ \frac{ \ln (1 - 3^{-n}) }{\ln (1 - 3^{-1}) } \right]^{-1/2} \int_0^{\infty} (1 - 3^{-1})^{C \sigma^2} d\sigma,
\]
which is to say that 
\[ 
   \E d(1,x) \le 1 + C' [- \ln (1 - 3^{-n} ) ]^{-1/2} 
\]
for some $C' > 0$ independent of $n$.
By convexity, $- \ln ( 1 - s ) \ge s $ for all $s < 1$, and so 
\[ [ - \ln(1 - 3^{-n}) ]^{-1/2} \le (3^{-n})^{-1/2} = 3^{n/2}, \]
thus 
\begin{equation}
   \label{eqn:suburbbound}
   \E d(1,x) \lesssim 3^{n/2},
\end{equation} 
the implied constant of course independent of $n$.

Now, we bound 
\[ \E \#\{e \in x \bar{\gamma}_n : e \mbox{ is slow }\} = \sum_{e \in \bar{\gamma}_n } \Prob( xe \mbox{ is slow} ); \]
since $xe$ will only be slow if another $T_z E_{Y_z}$ with $Y_z \ge n$ competes for it, the above quantity is bounded above by
\begin{align*}
   &\sum_{e \in\bar{\gamma}_n} \Prob(xe \in zE_{Y_z} \mbox{ and } Y_z \ge n \mbox{ for some } x \ne z \in \Gamma) \\
   &\le \sum_{e \in \bar{\gamma}_n}  \sum_{x_0 \in \Gamma} \sum_{z \in \Gamma \setminus x_0} \sum_{i=n}^{\infty}
   \Prob(x=x_0, x_0 e \in z E_i, Y_z = i) \\
   & = \sum_{e \in \bar{\gamma}_n} \sum_{x_0 \in \Gamma} \sum_{i=n}^{\infty} \sum_{z \in \Gamma : x_0^{-1} z e \in E_i} 
   \Prob(x=x_0, Y_z = i); 
\end{align*}
we claim that for $i \ge n$ and $x_0 \ne z$, $\Prob(x=x_0, Y_z=i) \le \frac{3}{2} \Prob(x=x_0) \Prob(Y_z=i)$, and hence we continue
\begin{align}
   \label{eqn:obstructionbound} 
   \E \#\{e \in x \bar{\gamma}_n : e \mbox{ is slow }\} 
   &\le \sum_{e \in \bar{\gamma}_n} \sum_{x_0 \in \Gamma} \sum_{i=n}^{\infty} \sum_{z \in \Gamma : x_0^{-1} z e \in E_i} 
   \frac{3}{2} \Prob(x=x_0) \Prob(Y_z = i) \nonumber \\
   &\le \frac{3}{2}\sum_{e \in \bar{\gamma}_n} \sum_{x_0 \in \Gamma} \sum_{i=n}^{\infty} |E_i| \Prob(x=x_0) \Prob(Y_z=i) 
   = \frac{3}{2} \sum_{e \in \bar{\gamma}_n} \sum_{i=n}^{\infty} |E_i| \Prob(Y_z=i) \nonumber \\
   &\lesssim \sum_{e \in \bar{\gamma}_n} \sum_{i=n}^{\infty} 2^i \cdot 3^{-i}
   = \sum_{e \in \bar{\gamma}_n} 3 \left( \frac{2}{3} \right)^n 
   = 3 |\bar{\gamma}_n| \left( \frac{2}{3} \right)^n \nonumber \\
   &\lesssim \left( \frac{4}{3} \right)^n. 
\end{align}
To prove the claim, note that for $x_0 \ne z$, $i \ge n$,
\[ \Prob(x = x_0, Y_z = i) = \Prob(Y_{x_1} \ne n \mbox{ for all } x_1 < x_0, Y_{x_0} = n, Y_z = i); \]
if $x_0 < z$, then all these events are independent, and hence $\Prob(x=x_0, Y_z=i) = \Prob(x=x_0)\Prob(Y_z=i)$.
Otherwise $z < x_0$, and then
\[ \Prob(x = x_0, Y_z = i) = \left(\prod_{x_1 < x_0, x_1 \ne z} \Prob(Y_{x_1} \ne n) \right) \Prob(Y_{x_0} = n) \Prob( Y_z \ne n, Y_z = i). \]
If $i = n$, then this is equal to $0$. Otherwise, $i>n$, and 
\[
    \Prob(Y_z \ne n, Y_z = i) = \Prob(Y_z = i) = \frac{\Prob(Y_z = i)}{\Prob(Y_z \ne n)} \Prob(Y_z \ne n) \le \frac{3}{2} \Prob(Y_z = i) \Prob(Y_z \ne n),
\]
where we used that $\Prob(Y_z \ne n) = 1 - 3^{-n} \ge \frac{2}{3}$. Hence
\begin{align*}
   \Prob(x=x_0, Y_z = i) 
   &\le \frac{3}{2} \left(\prod_{x_1 < x_0, x_1 \ne z} \Prob(Y_{x_1} \ne n) \right) \Prob(Y_{x_0} = n) \Prob(Y_z \ne n) \Prob(Y_z = i) \\
   &= \frac{3}{2} \Prob(x=x_0) \Prob(Y_z = i),
\end{align*}
as desired.

Hence, applying \eqref{eqn:pathlengthbound}, \eqref{eqn:suburbbound}, and \eqref{eqn:obstructionbound},
\[ 
   \frac{ \|b_n\|_2 \tilde{T}(z_n)}{2^n} \le \frac{ \| b_n \|_2 \E T(\gamma) }{2^n}
   \le   K \|b_n\|_2 \left[ 2 O \left(\left(\frac{3^{1/2}}{2}\right)^n\right) + O\left(\left(\frac{2}{3}\right)^n\right) \right] + 1 + \frac{\sqrt{k}}{2^{n+1}}, 
\]
which is less than $1 + \epsilon$ for sufficiently large $n$, as desired.

To tie up the final loose end, we prove that the volume growth of the center of $\Gamma$ is at least $2$-dimensional.
This is a simple corollary of the following lemma from the notes of Drutu and Kapovich ~\cite{GGTGrutuKapovich}:
\begin{lemma}[Lemma 14.15 from \cite{GGTGrutuKapovich}] \label{ggtlemma}
   Let $\Gamma$ be a finitely generated nilpotent group of class $k$ and let $C^k \Gamma$ be the last nontrivial term in its lower central series.
   If $S$ is a generating set for $\Gamma$, and $g \in C^k \Gamma$, then there exists a constant $\lambda = \lambda(S,g)$ such that
   for all $m \ge 0$,
   \[
      d_S (1, g^m) \le \lambda m^{1/k}.
   \]
\end{lemma}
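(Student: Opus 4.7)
\textbf{Proof proposal for Lemma \ref{ggtlemma}.}

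The plan is to prove the bound by induction on the nilpotency class $k$, after reducing to the case where $g$ is a single iterated commutator of generators. First I would use the fact that $C^k \Gamma$ is central in $\Gamma$ (since $[\Gamma, C^k\Gamma] = C^{k+1}\Gamma = 1$) and is generated, as an abelian group, by the $k$-fold iterated commutators $c = [s_{i_1}, [s_{i_2}, \ldots, [s_{i_{k-1}}, s_{i_k}]\ldots]]$ with $s_{i_j} \in S \cup S^{-1}$. Writing $g = c_1^{\epsilon_1} \cdots c_N^{\epsilon_N}$ for some such commutators $c_j$ and signs $\epsilon_j \in \{\pm 1\}$, centrality of $C^k\Gamma$ gives $g^m = c_1^{\epsilon_1 m} \cdots c_N^{\epsilon_N m}$, so it suffices to prove the bound $d_S(1, c^m) \le \lambda(S,c) m^{1/k}$ for each single iterated commutator $c$.

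The key step is the multilinearity identity
\[
   \bigl[ a_1^{n_1}, [a_2^{n_2}, \ldots, [a_{k-1}^{n_{k-1}}, a_k^{n_k}]\ldots]\bigr] = c^{n_1 n_2 \cdots n_k}
\]
valid for any $a_j \in S \cup S^{-1}$, $n_j \in \Z$, where $c = [a_1, [a_2, \ldots, [a_{k-1}, a_k]\ldots]]$. I would prove this by induction on $k$, using the fact that whenever $[x,y] \in Z(\Gamma)$ the commutator $[\cdot,\cdot]$ is a homomorphism in each argument modulo central terms; this relies on identities like $[xy,z] = [x,z]^y [y,z]$ which collapse to bilinearity because $C^k\Gamma$ is central. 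Taking all $n_j = n$ gives a word of length at most $2k n \cdot \max_j |a_j|_S$ representing $c^{n^k}$, i.e.
\[
   d_S(1, c^{n^k}) \le 2k n \, M, \qquad M := \max_j |a_j|_S.
\]

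To pass from $n^k$ to an arbitrary integer $m$, I would write $m = n^k + r$ with $n = \lfloor m^{1/k} \rfloor$ and $0 \le r \le (n+1)^k - n^k \le C_k n^{k-1}$, and use centrality to split $c^m = c^{n^k} \cdot c^r$. Applying the previous bound to $c^{n^k}$ and strong induction on $m$ to $c^r$ gives
\[
   d_S(1, c^m) \le 2k n M + \lambda (C_k n^{k-1})^{1/k} = \bigl(2kM + \lambda C_k^{1/k} n^{-1/k}\bigr) n \le \bigl(2kM + \lambda C_k^{1/k} n^{-1/k}\bigr) m^{1/k}.
\]
Choosing $\lambda$ large enough so that $2kM + \lambda C_k^{1/k} n^{-1/k} \le \lambda$ for all $n \ge n_0$ (for instance $\lambda = 4kM$ with $n_0$ chosen accordingly), and handling the finitely many $m$ with $m^{1/k} < n_0$ by the trivial bound $d_S(1, c^m) \le m |c|_S$ absorbed into $\lambda$, closes the induction.

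The main obstacle will be establishing the multilinearity identity cleanly. It is ``obviously morally true'' by analogy with Lie brackets, but a rigorous inductive proof requires keeping careful track of which lower-central-series terms each error lies in; the argument essentially reduces to the collection formulas used in the Hall--Petresco calculus. Once that identity is in hand, the induction on the remainder $r$ is entirely routine.
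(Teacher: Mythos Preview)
The paper does not prove this lemma; it is quoted verbatim as Lemma~14.15 from Dru\c{t}u--Kapovich and used as a black box to deduce Lemma~\ref{centergrowth}. So there is no ``paper's proof'' to compare against.

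That said, your argument is essentially the standard one and is correct. A couple of small remarks:

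\begin{itemize}
\item Your stated word-length bound $2knM$ for the iterated commutator $[a_1^n,[a_2^n,\ldots,[a_{k-1}^n,a_k^n]\ldots]]$ is not right: unwinding $[x,y]=x^{-1}y^{-1}xy$ gives a length roughly $(3\cdot 2^{k-1}-2)n$, which exceeds $2kn$ for $k\ge 3$. This is harmless, since any bound of the form $C_k n$ suffices for the rest of the argument.
\item The opening sentence says ``induction on the nilpotency class $k$'', but the main bound is actually obtained by strong induction on $m$; induction on $k$ only enters in establishing the multilinearity identity. It would be clearer to say so up front.
\item The multilinearity identity is genuinely the crux, as you acknowledge. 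The clean way to state what you need is that the $k$-fold commutator map $\Gamma^k\to C^k\Gamma$ descends to a $\Z$-multilinear map $(\Gamma^{ab})^k\to C^k\Gamma$; this is exactly the fact the paper itself invokes in Appendix~\ref{grouptheory}. Your sketch via $[xy,z]=[x,z]^y[y,z]$ and pushing error terms into $C^{k+1}\Gamma=1$ is the right mechanism and can be made rigorous with a short induction on the slot index.
\end{itemize}

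With those cosmetic fixes, the proof goes through.
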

\begin{lemma} \label{centergrowth}
   Let $\Gamma$ be a nontrivial finitely generated torsion-free nilpotent group which is not isomorphic to $\Z$, 
   $S$ a finite generating set for $\Gamma$.
   Denote the center of $\Gamma$ by $Z$.
   Then, there exists a constant  $C > 0$ depending only on $\Gamma$ and $S$ such that
   \[
      \# \{z \in Z : d(1,z) \le i\} \ge Ci^2
   \]
   for all $i \ge 0$.
\end{lemma}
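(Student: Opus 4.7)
The plan is to split into two cases depending on whether $\Gamma$ is abelian or genuinely nilpotent of class $\ge 2$, and in each case exhibit an explicit infinite family of central elements whose word-lengths grow slowly enough.

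First I would dispose of the abelian case. Since $\Gamma$ is finitely generated, torsion-free, abelian, nontrivial, and not isomorphic to $\Z$, we have $\Gamma \cong \Z^d$ for some $d \ge 2$, and $Z=\Gamma$. Then any finite generating set $S$ yields a Cayley graph whose volume growth is of degree exactly $d$: picking any $\Z$-basis $e_1,\dots,e_d$ of $\Gamma$ and expressing each $e_j$ as a word of length $\le L$ in $S$, the map $(n_1,\dots,n_d)\mapsto \sum n_j e_j$ embeds $\{(n_1,\dots,n_d) : |n_j|\le i/(Ld)\}$ into $B_d(i)$, giving $|B_d(i)\cap Z|\gtrsim i^d \ge i^2$ for large $i$. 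The constant can then be adjusted to handle small $i$ (noting $i=0$ is automatic).

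Second, I would handle the case when $\Gamma$ is nilpotent of class $k\ge 2$. Let $C^k\Gamma$ denote the last nontrivial term of the lower central series; by definition of the class, $[\Gamma,C^k\Gamma]=C^{k+1}\Gamma=\{1\}$, so $C^k\Gamma\subseteq Z$. Pick any nontrivial $g\in C^k\Gamma$; torsion-freeness of $\Gamma$ forces $g$ to have infinite order, so the powers $\{g^m : m\in\Z\}$ are pairwise distinct and all lie in $Z$. Applying Lemma \ref{ggtlemma} (and its obvious analogue for negative powers using $d(1,g^{-m})=d(1,g^m)$), we get $d(1,g^m)\le \lambda|m|^{1/k}$, so $\{g^m : |m|\le (i/\lambda)^k\}\subseteq Z\cap B_d(i)$, yielding at least $2\lfloor(i/\lambda)^k\rfloor+1\gtrsim i^k \ge i^2$ central elements (since $k\ge 2$) in the ball of radius $i$, for all sufficiently large $i$, and again adjusting the constant downward handles small $i$.

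The only step requiring any care at all is verifying that the chosen $g$ is central and of infinite order, both of which are immediate from the definition of nilpotency class and from the torsion-freeness hypothesis; all the real work is done by Lemma \ref{ggtlemma}. There is no genuine obstacle: the slight asymmetry between the two cases is exactly what makes the bound $i^2$ the right uniform lower bound, as the abelian case with $d=2$ and the class-$2$ case with $\mathrm{rank}(C^2\Gamma)=1$ both saturate it.
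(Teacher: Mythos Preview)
Your proof is correct and follows essentially the same approach as the paper, with both invoking Lemma~\ref{ggtlemma} for the genuinely nilpotent case. The only cosmetic difference is the case split: you divide according to whether $\Gamma$ is abelian, while the paper divides according to whether $Z$ has rank $\ge 2$ or rank $1$.
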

\begin{proof}
   We know that $Z$ is a nontrivial finitely generated free abelian group. First, assume that $Z \not\cong \Z$. Then $Z \cong \Z^k$ for some
   $k \ge 2$. Then the lemma follows, since the quantity in question grows at least as fast as $Z$ does as a finitely generated group.
   More explicitly, if $S'$ is a finite generating set for $Z \cong \Z^k$, we know that there exists $C'>0$ depending only on $S'$ such that
   \[
      \# \{z \in Z : d_{S'}(1,z) \le i \} \ge C' i^k.
   \]
   Take $m = \max_{s \in S'} d (1,s) < \infty$. Then for all $z \in Z$, $d(1,z) \le m d_{S'}(1,z)$, and hence
   \[
      \# \{z \in Z : d(1,z) \le i\} \ge \# \left\{z \in Z : d_{S'}(1,z) \le \frac{i}{m} \right\} \ge \frac{C'}{m^k} i^k.
   \]

   Now, suppose $Z \cong \Z$. Then $\Gamma$ is not abelian (otherwise we would have $\Gamma = Z \cong \Z$, contradicting our assumption).
   So $\Gamma$ is nilpotent of step $k$ for some $k \ge 2$, and $C^k \Gamma$ is a nontrivial subgroup of $Z$.
   Take a generator $g$ for $C^k \Gamma$. By Lemma \ref{ggtlemma}, we get
   $\lambda=\lambda(g,S)>0$ such that $d(1, g^m) \le \lambda m^{1/k}$ for all $m \ge 0$. Therefore
   \[
      \{z \in Z : d(1,z) \le i\} \ge \{m \ge 0 : d(1, g^m) \le i\} \ge \{m \ge 0 : \lambda m ^{1/k} \le i\} 
      \ge \left\lfloor \frac{1}{\lambda^k} i^k \right\rfloor \ge C i^k
    \]
    for some $C >0$.
\end{proof}

\section{Restrictions in the virtually nilpotent case} \label{restrictions}

Any finitely generated virtually nilpotent group $\Gamma$ will contain a finite index subgroup $H$ which is finitely generated, nilpotent, torsion free, 
and which has torsion-free abelianization (see Appendix \ref{grouptheory}). We often think of the $H$ and $\Gamma$ as having the same coarse
geometry; indeed:
\begin{prop} \label{prop:finiteindex}
Let $\Gamma$ be a group endowed with a metric $T$, let $H$ be a finite index subgroup, and let $(X,D)$ be a metric space. If $T \lesssim d$ ($d$
the word metric) and $(H, \frac{1}{t} (T|_H)) \xrightarrow{GH} (X,D)$, then also $(\Gamma, \frac{1}{t} T) \xrightarrow{GH} (X,D)$.
\end{prop}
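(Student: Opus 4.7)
The strategy is to show that $H$ sits as an asymptotically dense subset of $\Gamma$ under the rescaled metrics $\tfrac{1}{t}T$, and then invoke the triangle inequality for (pointed) Gromov-Hausdorff convergence.

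The first step is uniform density. Since $[\Gamma:H] < \infty$, I choose finitely many right-coset representatives $r_1=1, r_2, \ldots, r_k$ so that $\Gamma = \bigsqcup_i H r_i$, and set $M := \max_i |r_i|$. By left-invariance of the word metric $d$, for any $g = h r_i \in \Gamma$ we get $d(g,h) = d(h r_i, h) = |r_i| \le M$. Combined with the hypothesis $T \le K d$ for some $K > 0$, this yields the uniform bound $T(g, H) \le KM$ for every $g \in \Gamma$. After rescaling, $\tfrac{1}{t} T(g, H) \le KM/t \to 0$ as $t \to \infty$. In other words, $H$ is an $\varepsilon$-net in $(\Gamma, \tfrac{1}{t}T)$ for any $\varepsilon > 0$ provided $t$ is sufficiently large.

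The second step is to transfer the convergence from $H$ to $\Gamma$. Fix a basepoint (the identity $1 \in H \subset \Gamma$) together with some $R > 0$ and $\varepsilon > 0$. For $t$ large enough that $KM/t < \varepsilon/2$, every $g \in \bar B_{\Gamma,\,T/t}(1, R)$ has some $h \in H$ with $\tfrac{1}{t}T(g,h) < \varepsilon/2$, and then $h \in \bar B_{H,\,T/t}(1, R + \varepsilon/2)$ by the triangle inequality. Conversely, $\bar B_{H,\,T/t}(1, R) \subset \bar B_{\Gamma,\,T/t}(1, R)$ tautologically. Hence the Hausdorff distance between $\bar B_{\Gamma,\,T/t}(1, R)$ and $\bar B_{H,\,T/t}(1, R+\varepsilon/2)$ inside $(\Gamma, \tfrac{1}{t}T)$ is at most $\varepsilon/2$, and in particular their GH distance is at most $\varepsilon/2$. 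By the assumed pointed GH convergence of $(H, \tfrac{1}{t}T|_H, 1)$ to $(X, D, p)$, the ball $\bar B_{H,\,T/t}(1, R+\varepsilon/2)$ is $\varepsilon/2$-GH close to $\bar B_X(p, R+\varepsilon/2)$ for $t$ large. Two applications of the triangle inequality for GH distance finish the argument.

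The proof is essentially routine and there is no serious obstacle: the only minor bookkeeping issue is to keep track of the slight inflation of radii ($R \mapsto R + \varepsilon/2$) that arises in the pointed GH definition when one adds the ``tail'' of $\Gamma \setminus H$ back in. The essential input is the uniform bound $T(g,H) \le KM$, which comes from left-invariance of $d$ and the one-sided bi-Lipschitz comparison $T \lesssim d$; the opposite inequality $T \gtrsim d$ is not needed for this proposition.
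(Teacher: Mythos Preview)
Your proof is correct and follows essentially the same approach as the paper: bound $\sup_{g\in\Gamma} T(g,H)$ by a constant (using finite index and $T\lesssim d$), so that after rescaling $H$ is $O(1/t)$-dense in $(\Gamma,\tfrac{1}{t}T)$, and then conclude by the triangle inequality for Gromov--Hausdorff distance. The paper's version is a two-line sketch of exactly this; your write-up simply fills in the bookkeeping for the pointed convergence (the radius inflation $R\mapsto R+\varepsilon/2$) that the paper elides.
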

\begin{proof}
Since $(H, \frac{1}{t} T|_H)$ is a metric subspace of $(\Gamma,  \frac{1}{t} T)$, 
the Gromov-Hausdorff distance between the two spaces is bounded---up to an absolute constant---by
\[ \inf \{ \epsilon > 0 : T(g,H) < \epsilon \mbox{ for all } g \in \Gamma \}, \]
which is itself bounded up to a constant by 
\[ \frac{1}{t}[\Gamma:H] = O(1/t). \]
Thus $(\Gamma,\frac{1}{t}T)$ and $(H, \frac{1}{t}T)$ must tend to the same limit.
\end{proof}

Thus, it might seem trivial to pass from the simplified case we just proved to the general case.
However, perhaps surprisingly, the answer to the question we consider is \emph{not} the same for $\Gamma$ and $H$. In general, there may be some limit shapes for stationary FPPs on $H$ which are \emph{not} attained by stationary FPPs on $\Gamma$. Consider the following example.

\begin{figure}[t]
   \centering
   \includegraphics[scale=1]{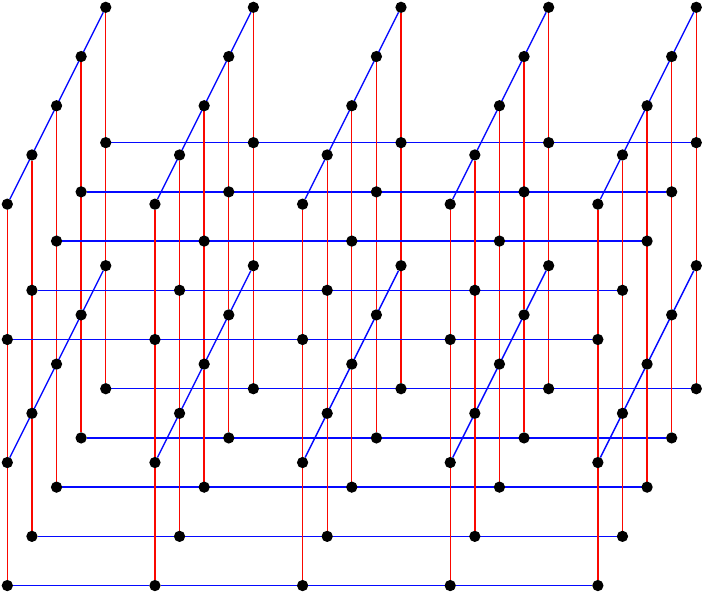}
   \caption{A portion of the Cayley graph of $\langle \rho \rangle \ltimes \mathbb{Z}[i]$ 
   with respect to the generating set $\{\rho, 1+0i\}$. 
   Edges labeled by $\rho$ are red, while edges labeled by $1+0i$ are blue.}
\end{figure}

Let $\Gamma := \langle \rho \rangle \ltimes \mathbb{Z}[i]$, the semidirect product of the Gaussian integers with a cyclic group of order four, the 
generator of the cyclic group acting by multiplication by $i$. $\Gamma$ contains the abelian (hence nilpotent) group $\Z[i] \cong \Z^2 =: H$ as
a subgroup of index $4$. We know from our work above (and from ~\cite{HaggstromMeester}) that any norm on $\R^2$ is attainable as a limit shape 
for $H$. However, we claim that the scaling limit of any invariant metric on $\Gamma$ which is $\lesssim d$ (such as $\E T$ for a stationary FPP 
$T$ with integrable weights) must be a norm on $\R^2$
which has $\frac{\pi}{4}$ rotational symmetry. Take any $(x + iy) \in \Z[i]$. Then
\begin{align*} 
   \E T(1,i(x+iy)) &= \E T(1, \rho^{-1}(x+iy)\rho) \\
   &\le \E T(1, \rho^{-1}) + \E T(\rho^{-1}, \rho^{-1}(x+iy)) + \E T(\rho^{-1}(x+iy), \rho^{-1}(x+iy)\rho) \\
   &= \E T(1, \rho^{-1}) + \E T(1, (x+iy)) + \E T(1, \rho) \le \E T(1, (x+iy)) + 2(\mathrm{const.}).
\end{align*}
Iterating this inequality four times and taking a scaling limit gives 
\begin{align*}
   \lim_{n \to \infty} \frac{\E T(1, n(x+iy))}{n}& \\
   =  \lim_{n \to \infty} \frac{\E T(1, ni(x+iy))}{n}
   &=\lim_{n \to \infty} \frac{\E T(1, -n(x+iy))}{n} = \lim_{n \to \infty} \frac{\E T(1, -ni(x+iy))}{n}, 
\end{align*}
which is precisely the statement that the limit norm has quarter-turn symmetry.

A similar restriction arises in any virtually nilpotent group. 
As in Section \ref{defnsandbackground}, let $\Gamma$ be a finitely generated virtually nilpotent group, and let $N$ be a 
torsion-free nilpotent normal subgroup of finite index (for the construction of such a subgroup see Appendix \ref{grouptheory}). 
The conjugation action of $\Gamma$
on $N$ induces an action of $\Gamma/N =: Q$ on $N^{ab}_{free}$. 
It will be convenient later to phrase things in terms of the
right conjugation action, and so we think of the action as a homomorphism $\phi: Q \to \Aut(N^{ab}_{free})^{op}$.
This further induces a right action of $Q$ on 
 $N^{ab} \otimes \R \cong N^{ab}_{free} \otimes \R \cong \g^{ab}$, which, by abuse of notation, we also denote by 
$\phi: Q \to \Aut(\g^{ab})^{op}$. 
We say that a norm on $\Phi$ on $\g^{ab}$ is \emph{conjugation-invariant} if it is $\phi$-invariant, that is,
\[ \Phi(x^{\phi(q)}) = \Phi(x) \]
for all $x \in N^{ab} \otimes \R, q \in Q$.

\begin{prop}
Let $\Gamma, N, \phi$ be as above. If $T$ is a stationary integrable FPP on $\Gamma$ such that the scaling limit of $\E T$ is a Carnot-Carath\'eodory metric on a nilpotent Lie group $G_{\infty}$, then the norm on $\g^{ab}$ associated to this metric is $\phi$-invariant.
\end{prop}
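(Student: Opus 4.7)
The plan is to run the Gaussian-integer calculation given just above in the general setting. Fix $q \in Q = \Gamma/N$, choose any lift $\tilde q \in \Gamma$, and set $C_q := \E T(1,\tilde q) + \E T(1,\tilde q^{-1})$, which is finite because the edge weights are integrable. For every $x \in N$, stationarity of $T$ (equivalently, left-invariance of $\E T$) combined with the triangle inequality yields
\[
\E T(1, \tilde q^{-1} x \tilde q) = \E T(\tilde q, x \tilde q) \le \E T(\tilde q, 1) + \E T(1, x) + \E T(x, x \tilde q) = \E T(1, x) + C_q,
\]
and the reverse bound follows by applying the same estimate to the pair $(\tilde q^{-1}, \tilde q^{-1} x \tilde q)$ in place of $(\tilde q, x)$. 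Hence $|\E T(1, \tilde q^{-1} x \tilde q) - \E T(1, x)| \le C_q$ for all $x \in N$.

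Next I transfer this bound to $N^{ab}_{free}$. Since $N$ is normal in $\Gamma$ and the subgroups $[N,N]$ and $N^{ab}_{tor}$ are characteristic in $N$, conjugation by $\tilde q$ descends to an automorphism of $N^{ab}_{free}$ which depends only on the class $q$ and which, by construction, is exactly $\phi(q)$. Moreover, the bijection $x \mapsto \tilde q^{-1} x \tilde q$ of $N$ maps the fiber of $y \in N^{ab}_{free}$ onto the fiber of $y^{\phi(q)}$. Taking the infimum of the preceding inequality over each such fiber therefore gives
\[
|\tilde T(y^{\phi(q)}) - \tilde T(y)| \le C_q \qquad \text{for every } y \in N^{ab}_{free}.
\]

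Finally, apply this bound to $ny$ and let $n \to \infty$. The remark following Theorem \ref{cantrellfurmanthm}, together with Appendix \ref{grouptheory}, identifies the seminorm $\Phi$ on $\g^{ab}$ coming from the abelianization construction of Section \ref{defnsandbackground} with the norm associated to the Carnot--Carath\'eodory scaling limit, so $\tilde T(ny)/n \to \Phi(y)$ by definition of $\Phi$. Since $\phi(q)$ is $\Z$-linear we have $(ny)^{\phi(q)} = n\, y^{\phi(q)}$, whence
\[
\Phi(y^{\phi(q)}) = \lim_{n \to \infty} \frac{\tilde T(n y^{\phi(q)})}{n} = \lim_{n \to \infty} \frac{\tilde T(ny)}{n} = \Phi(y)
\]
for every $y \in N^{ab}_{free}$. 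The identity then extends to all of $N^{ab}\otimes \R$ by $\R$-homogeneity of $\Phi$, $\R$-linearity of $\phi(q)$, and continuity, because $N^{ab}_{free}$ spans $N^{ab}\otimes\R$ as a real vector space. The only step that is not a direct triangle-inequality manipulation is the identification of $\Phi$ with the abelianization-level seminorm; this is precisely what Appendix \ref{grouptheory} is designed to provide, so the ``main obstacle'' is really just careful bookkeeping of characteristic subgroups and the definition of $\phi$.
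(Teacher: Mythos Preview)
Your proof is correct and follows essentially the same route as the paper: bound $|\E T(1,\tilde q^{-1}x\tilde q)-\E T(1,x)|$ by a constant via left-invariance and the triangle inequality, push this through the quotient $N\to N^{ab}_{free}$ by taking infima over fibers, and conclude that $\tilde T$ and $\tilde T\circ\phi(q)$ are asymptotically equivalent, hence $\Phi$ is $\phi(q)$-invariant. Your write-up is in fact slightly more explicit than the paper's about why conjugation descends (characteristic subgroups) and about extending from $N^{ab}_{free}$ to $N^{ab}\otimes\R$, but the argument is the same.
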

\begin{proof}
The proof is very similar to our example. First, let $\tilde{Q}$ be a finite set of coset representatives of $N$, that is, a finite subset 
$\tilde{Q} \subset \Gamma$
such that the quotient map $\Gamma \to Q$ induces a bijection $\tilde{Q} \leftrightarrow Q$; we may assume without loss of generality that
$\tilde{Q}$ is symmetric (if $\tilde{q} \in \tilde{Q}$, then $\tilde{q}^{-1} \in \tilde{Q}$). Since $\tilde{Q}$ is finite and the FPP
is integrable, there exists some constant $C < \infty$ such that $\E(T(1,\tilde{q}) \le C$ for all $\tilde{q} \in \tilde{Q}$. Then, for any $x \in N$
and any $\tilde{q} \in \tilde{Q}$,
\[ \E T(1, x^{\tilde{q}}) \le \E T(1,\tilde{q}^{-1}) + \E T(1,x) + \E T(1, \tilde{q}) \le \E T(1, x) + 2C \]
where we have used the fact that $\E T$ is left-invariant. Similarly, we have
\[ \E T(1, x) = \E T(1, (x^{\tilde{q}})^{\tilde{q}^{-1}}) \le \E T(1, x^{\tilde{q}}) + 2C, \]
and thus
\[ | \E T(1,x) - \E T(1,x^{\tilde{q}}) | \le 2 C. \]
Since $\phi$ respects the quotient map $N \to N^{ab}_{free}$, 
taking infima over $x \in N$ such that $x^{ab}_{free} = z$ for some fixed $z \in N^{ab}_{free}$ gives
\[ | \tilde{T}(z) - \tilde{T}(z^{\phi(q)}) | \le 2 C = o(z); \]
that is, $\tilde{T}$ is asymptotically equivalent to $\tilde{T}^{\phi(q)}$ for all $q \in Q$, and hence the norm $\Phi$ it induces on $\g^{ab}$
is $\phi(q)$-invariant. 
Pansu's theorem ~\cite{Pansu} tells us that $\Phi$ is the norm in the Carnot-Carath\'eodory construction of the scaling limit
of $(\Gamma, \E T)$, so we are done.
\end{proof}

Although there is certainly more work to be done in exploring necessary conditions for the existence of a limit shape, in all cases which we know
how to prove (~\cite{BenjaminiTessera}, ~\cite{CantrellFurman}), the scaling limit of the random space $(\Gamma, T)$ coincides
with the scaling limit of its mean $(\Gamma, \E T)$, so this tells us that conjugation invariance is a necessary feature of a limit shape
at least in all cases in which we can prove there is a scaling limit.

Theorem \ref{mainthm} then states that this is the \emph{only} obstruction to a Carnot-Carath\'eodory metric on $G_{\infty}$ 
being the limit shape of a stationary FPP on $\Gamma$; that is, as long as the Carnot-Carath\'eodory metric comes from a norm
which is conjugation-invariant, it is the scaling limit of some FPP with stationary weights.

\section{Construction of the edge weights in the virtually nilpotent case} \label{generalconstruction}

Transferring our theorem to the general case is far from automatic, essentially since our Cayley graph may not be nice with respect to the 
the finite index subgroups we wish to pass to. Moreover, instead of keeping track of ``displacements'' of paths by looking at the projection to
$\Gamma^{ab}$, we want to instead look at $N^{ab}_{free}$, and there is typically no nice homomorphism from $\Gamma$ to $N^{ab}_{free}$. 
Nor is there a nice embedding $N^{ab} \to \Gamma^{ab}$; the natural map can have very large kernel (e.g. in our example 
$\Gamma := \langle \rho \rangle \ltimes \Z[i]$ above, $\Gamma^{ab}$ is finite, while $N = N^{ab} = \Z[i]$).
Ultimately, we resolve this by looking at a slightly nonabelian notion of ``displacement'' via the projection $\Gamma \to \Gamma/\widetilde{[N,N]}$,
where we define $\widetilde{[N,N]}$ to be the kernel of the projection $N \to N^{ab}_{free}$.
Note that $\Gamma/\widetilde{[N,N]}$ contains $N^{ab}_{free}$ as a subgroup of finite index.

In spite of these complications, the spirit of the proof exactly the same. Heuristically, we want to ensure that
every direction has the correct ``speed'' at large scales, and we do this by sprinkling long ``fast'' paths throughout the graph which travel
at a certain speed in a certain direction; the rest of the edges are ``slow'' so that any long geodesic must largely avoid them.

It is clear from our above proof that the weight $K$ of the slow edges can be as large as we like, as long as it is finite. We use the slowness
of the edges to account for any error in the fast paths--that is, to guard against the fact that a subpath of a fast path might not 
go in exactly the right direction or exactly at the right speed.

In our first proof, we used the existence of nice paths (Proposition \ref{pathsprop}) which had the property that they (1) stayed close
to the straight line through $b_n$, and (2) traveled ``monotonically forward'' along $b_n$. In the general case, we will want to find
nice paths in $\Gamma/\widetilde{[N,N]}$ which satisfy these properties in a certain ``coarse'' sense to be described below.

Let us now go into more detail understanding the group $\Gamma/\widetilde{[N,N]}$, especially considering it as a finite extension of $N^{ab}_{free}$.
First, take a finite set of coset representatives $\tilde{Q} \subset \Gamma/\widetilde{[N,N]}$ for $N/\widetilde{[N,N]}$; 
we assume for convenience that
$\tilde{Q}$ is symmetric and contains the identity. 
The quotient map $\Gamma/\widetilde{[N,N]} \to Q := (\Gamma/\widetilde{[N,N]})/(N/\widetilde{[N,N]}) 
\cong \Gamma/N$ induces a bijection $\tilde{Q} \to Q$, and we denote its inverse by $s: Q \to \tilde{Q}$. 
If $s$ were a homomorphism, we would have a semidirect
product, but this is not always possible in general. In general, define a function $\eta : Q \times Q \to N^{ab}_{free}$ satisfying
\[ s(q_1) s(q_2) = s(q_1 q_2) \eta(q_1, q_2). \]
This then allows us to understand $\Gamma/\widetilde{[N,N]}$ more explicitly thus: note that 
$Q \times N^{ab}_{free} \to \Gamma/\widetilde{[N,N]}, \\ {(q,n) \mapsto s(q)n}$
is a bijection. Pulling back the multiplication from $\Gamma/\widetilde{[N,N]}$ to the set $Q \times N^{ab}_{free}$ then gives the multiplication
\begin{align*}
(Q \times N^{ab}_{free}) &\times (Q \times N^{ab}_{free}) \to Q \times N^{ab}_{free} \\
(q_1, n_1) &\cdot (q_2, n_2) := (q_1 q_2, \eta(q_1, q_2) + n_1^{\phi(q_2)} + n_2).
\end{align*}
Thus, $\Gamma/\widetilde{[N,N]}$ looks like a semidirect product up to the ``finite error'' introduced by $\eta$.
\begin{rmk}
$\eta$ is in fact a cocycle; the cocycle condition comes precisely from the associativity of the above multiplication. However, we will
not use this fact. Rather, we will repeatedly use the simple fact that $\eta$ is a map from the finite set $Q \times Q$, and thus
has finite image and hence uniformly bounded image.
\end{rmk}
\begin{rmk}
The cocycle $\eta$ of course depends on our choice of $\tilde{Q}$, and the choice is very non-unique.
\end{rmk}

We will now introduce two modified notions of displacement which will be convenient for us. Let $\gamma$ be a path in $E$ 
(the Cayley graph of $\Gamma$) starting at $x \in \Gamma$ and ending at $y \in \Gamma$. We define
\[ \tilde{D}(\gamma) := \bar{x}^{-1} \bar{y} \in \Gamma/\widetilde{[N,N]}, \]
where $\bar{x},\bar{y}$ are the images of $x,y$ under the projection $\Gamma \to \Gamma/\widetilde{[N,N]}$. Note that $\tilde{D}$ is invariant
with respect to the action of $\Gamma$ on paths in $E$ by left multiplication. Note also that for concatenations of paths 
$\gamma = \alpha * \beta$ we have
\[ \tilde{D}(\gamma) = \tilde{D}(\alpha) \tilde{D}(\beta). \]
It will also be helpful for us to have a notion of displacement which lives in $N^{ab}_{free}$ rather than $\Gamma/\widetilde{[N,N]}$; for this, we take
a particular choice of point in $N^{ab}_{free}$ nearby (in the Cayley graph of $\Gamma/\widetilde{[N,N]}$) to $\tilde{D}(\gamma)$:
\[ D(\gamma) := \tilde{D}(\gamma) \tilde{q}(\gamma)^{-1} \in N^{ab}_{free}, \]
where $\tilde{q}(\gamma)$ is the image of $\tilde{D}(\gamma)$ under the composition $\Gamma/\widetilde{[N,N]} \to Q \xrightarrow{s} \tilde{Q}$; 
put another way, using the identification $\Gamma/\widetilde{[N,N]} \leftrightarrow Q \times N^{ab}_{free}$, if $\tilde{D}(\gamma) = (q,n)$,
then $D(\gamma) = (q,n)(q^{-1},0) = n^{\phi(q)^{-1}}$. Note also that if $\tilde{D}(\gamma) \in N^{ab}_{free}$, 
then $\tilde{D}(\gamma) = D(\gamma)$.

$D(\gamma)$ is convenient because it always lands in $N^{ab}_{free}$, the space we are trying to induce the correct norm on; however,
instead of being additive on paths, using the definition and the concatenation property for $\tilde{D}$, we instead get the 
slightly more complicated equation
\begin{equation} \label{eq:noncommdisplacement}
   D(\alpha \beta) = D(\alpha) + D(\beta)^{\phi(\alpha)} + \eta(\alpha, \beta)^{\phi(\alpha \beta)^{-1}}, 
\end{equation}
where in an abuse of notation, we define $\eta(\alpha, \beta) := \eta(q(\alpha), q(\beta)), \phi(\alpha) := \phi(q(\alpha))$, where
$q(\alpha)$ is the image of $\tilde{D}(\alpha)$ under the quotient map $\Gamma/[N,N] \to Q$.
Iterating the above fact easily gives the following by induction:
\begin{prop} \label{noncommdisplacement}
For any paths $\alpha_1,...,\alpha_N$ in $E$, we have
\[ D(\alpha_1 \cdots \alpha_N) = 
D(\alpha_1) + \sum_{i=1}^{N-1} \left( D(\alpha_{i+1}) 
+ \eta(\alpha_1 \cdots \alpha_i, \alpha_{i+1})^{\phi(\alpha_{i+1})^{-1}} \right)^{\phi(\alpha_1 \cdots \alpha_i)^{-1}} \]
\end{prop}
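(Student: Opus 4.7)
The plan is a straightforward induction on $N$, using equation \eqref{eq:noncommdisplacement} both as the base case and as the inductive building block; the associativity of path concatenation means the $N$-fold identity is forced once we know the $2$-fold identity.

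For $N=1$ the claim reduces to $D(\alpha_1)=D(\alpha_1)$ (empty sum). For $N=2$ the single summand $i=1$ of the proposed formula expands as $D(\alpha_1) + D(\alpha_2)^{\phi(\alpha_1)^{-1}} + \eta(\alpha_1,\alpha_2)^{\phi(\alpha_2)^{-1}\phi(\alpha_1)^{-1}}$, and the identity $\phi(\alpha_2)^{-1}\phi(\alpha_1)^{-1}=\phi(\alpha_1\alpha_2)^{-1}$, which holds because $\phi$ lands in $\Aut(\g^{ab})^{op}$, turns this back into \eqref{eq:noncommdisplacement}. The key algebraic facts used here are that $N^{ab}_{free}$ is abelian (so the right action of $\phi(q)^{-1}$ distributes over addition) and that $q(\alpha_1\alpha_2)=q(\alpha_1)q(\alpha_2)$ (since the map $\gamma \mapsto q(\gamma)$ factors through the homomorphism $\Gamma/\widetilde{[N,N]}\to Q$).

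For the inductive step, assume the formula holds up to $N$ paths and let $\alpha := \alpha_1\cdots\alpha_N$, $\beta := \alpha_{N+1}$. Apply \eqref{eq:noncommdisplacement} to $(\alpha,\beta)$ to get $D(\alpha\beta)$ as $D(\alpha)$ plus the two new terms involving $D(\beta)$ and $\eta(\alpha,\beta)$. Substituting the inductive hypothesis for $D(\alpha)$ reproduces the first $N-1$ summands in the target formula verbatim: nothing changes because the outer exponents $\phi(\alpha_1\cdots\alpha_i)^{-1}$ depend only on $\alpha_1,\dots,\alpha_i$, not on the fact that we have appended one more path. The two fresh contributions are then collected into a single $i=N$ summand by factoring out $\phi(\alpha_1\cdots\alpha_N)^{-1}$ as a common outer exponent and absorbing the leftover $\phi(\alpha_{N+1})^{-1}$ onto the $\eta$-term, again using the opposite-group relation $\phi(\alpha_{N+1})^{-1}\phi(\alpha_1\cdots\alpha_N)^{-1}=\phi(\alpha_1\cdots\alpha_{N+1})^{-1}$.

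The only real difficulty is bookkeeping exponents in an opposite-group action: one must consistently apply the rule $(x^{\phi(q_1)})^{\phi(q_2)}=x^{\phi(q_1q_2)}$ (the defining property of a right action via a homomorphism into $\Aut(\cdot)^{op}$) and the rule that inversion reverses products in $\Aut^{op}$. There is no conceptual content beyond \eqref{eq:noncommdisplacement}—indeed, the outer exponent $\phi(\alpha_1\cdots\alpha_i)^{-1}$ in the proposition is essentially engineered so that, when one expands inductively, each new step contributes exactly one new summand of the claimed form.
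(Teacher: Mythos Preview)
Your proof is correct and is exactly the approach the paper indicates: the paper's entire proof is the one-line remark ``Iterating the above fact easily gives the following by induction,'' and you have simply written out that induction. (Note that your $N=2$ check in fact reveals a typo in \eqref{eq:noncommdisplacement}: the exponent on $D(\beta)$ should be $\phi(\alpha)^{-1}$ rather than $\phi(\alpha)$, as one sees by expanding the multiplication law directly; with this correction your verification goes through verbatim.)
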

Thus, although the displacements do not add, besides the twisting of $\phi$ we only accumulated at most one uniformly bounded
error term per path concatenated, which will end up being enough later.

From now on we fix an isomorphism $\g^{ab} \cong \R^{d}$ such that $N^{ab}_{free}$ is identified with $\Z^d \subset \R^{d}$
via the map $N^{ab}_{free} \to N^{ab}_{free} \otimes \R \cong \g^{ab} \cong \R^d$. We will often thus identify $D(\gamma)$
with its image in $\R^d$.

We are now ready to state the properties we want for our ``nice'' paths in $E$ (which will become ``fast'' paths).
\begin{lemma} \label{coarsepaths}
   There exists a constant $C_0' > 0$ depending only on $\Gamma$, $S$, $N$, and $\tilde{Q}$ such that, for any vector $u \in \R^d$ and
   any $n \in \Z_{\ge 0}$ there exists a simple path $\gamma$ in $E$ such that 
   \begin{enumerate}
      \item $\gamma$ starts at $1 \in \Gamma$ and $\| D(\gamma) - 2^n u \|_2 \le C_0'$.
      \item $|\gamma| \lesssim |D(\gamma)| \lesssim 2^n \|u\|_2$.
      \item $\gamma$ stays near the line through $u$: 
               If $\alpha$ is a subpath of $\gamma$ starting at 1, then $\| D(\alpha) - \proj_u D(\alpha) \|_2 \le C_0'$.
      \item $\gamma$ is a finite concatenation of paths $\beta_i$ where for each $i$, $|\beta_i| \le C_0'$, 
               $\| D(\beta')^{\phi(q)} \|_2 \le C_0'$ for all $q \in Q$ and every subpath $\beta'$ of $\beta_i$, and 
               \[\pair{ D(\beta_0 \cdots \beta_{i+1}) - D(\beta_0 \cdots \beta_i) }{ \frac{u}{\|u\|_2} } \ge  \frac{1}{C_0'},\]
               that is, $\gamma$ is ``coarsely monotone.''
   \end{enumerate}
   We also assume that $\max_{q_1,q_2,q_3 \in Q} \|\eta(q_1,q_2)^{\phi(q_3)}\|_2 \le C_0'$.
\end{lemma}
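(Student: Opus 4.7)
The plan is to realize $\gamma$ as a ``lift'' to $E$ of a carefully chosen path in $\Z^d \cong N^{ab}_{free}$. Fix once and for all a $\Z$-basis $t_1,\dots,t_d$ of $N^{ab}_{free}$, lifts $\tilde t_i \in N$, and geodesic words $w_i$ in $S^{\pm 1}$ representing $\tilde t_i$ in $\Gamma$. The substitution $\pm e_i \mapsto w_i^{\pm 1}$ turns any edge path $\gamma^\circ$ in the standard Cayley graph of $\Z^d$ starting at the origin into an edge path $\gamma$ in $E$ starting at $1$. Crucially, each $w_i$ represents an element of $N$, so the endpoint of $\gamma$ lies in $N$ and projects to the endpoint of $\gamma^\circ$ in $N^{ab}_{free}$; consequently $\tilde D(\gamma)\in N^{ab}_{free}$ and $D(\gamma)$ equals the endpoint of $\gamma^\circ$. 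More generally, if $\alpha = \beta_1\cdots\beta_k$ is a prefix consisting of $k$ complete macro-moves, then $D(\alpha) = p_k$, the position of $\gamma^\circ$ after $k$ steps, because each $\beta_j$ ends in $N$ and $N^{ab}_{free}$ is abelian.

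For $\gamma^\circ$, I would use a deterministic line-tracking (Bresenham-type) scheme targeting $2^n u$: at step $k$ move by the $\mathrm{sign}(u_j)\,e_j$ that best keeps the running position $p_k$ near the continuous target $k\cdot u/\|u\|_1$. After $K = \lfloor 2^n \|u\|_1\rfloor$ steps the path terminates within $O(1)$ of $2^n u$ and satisfies $\|p_k - k\,u/\|u\|_1\|_2 \le C_1(d)$ for every $k$, with $C_1(d)$ depending only on $d$. Since $K \le \sqrt d\cdot 2^n\|u\|_2$, properties (1) and (2) are immediate from $|\gamma| \le K\max_i|w_i|$. For property (3), write a prefix $\alpha$ of $\gamma$ as $\beta_1\cdots\beta_k\,\alpha'$ with $\alpha'$ a proper prefix of some $w_{i_{k+1}}^{\pm 1}$; using the formula $D(\gamma) = n^{\phi(q)^{-1}}$ whenever $\tilde D(\gamma)=(q,n)$, one computes $D(\alpha) = p_k + (n')^{\phi(q')^{-1}}$ with $(q',n') = \tilde D(\alpha')$, so $D(\alpha)$ differs from $p_k$ by a uniformly bounded amount and stays in a bounded tube around the line through $u$.

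The main obstacle is property (4), the coarse monotonicity bound, which fails for individual macro-moves $w_j^{\pm 1}$ because $\pair{\pm e_j}{u/\|u\|_2} = |u_j|/\|u\|_2$ can be arbitrarily small. I would resolve this by taking the $\beta_i$'s to be blocks of $K_0$ consecutive macro-moves, for a large constant $K_0 = K_0(d)$. The Bresenham error bound applied to both endpoints of a block yields
\[
\pair{\,p_{(i+1)K_0} - p_{iK_0}\,}{\,u/\|u\|_2\,} \;\ge\; K_0\,\frac{\|u\|_2}{\|u\|_1} - 2C_1(d) \;\ge\; \frac{K_0}{\sqrt d} - 2C_1(d),
\]
using $\|u\|_1 \le \sqrt d\,\|u\|_2$, and the choice $K_0 \ge \sqrt d\,(1+2C_1(d))$ makes this at least $1$ uniformly in $u$ and $n$. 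Each block has length at most $K_0\max_i|w_i|$, and any subpath of a single block has $\tilde D$-value in a fixed finite subset of $\Gamma/\widetilde{[N,N]}$, so its $D$-value, and every $\phi(q)$-twist thereof for $q$ in the finite group $Q$, lies in a bounded subset of $N^{ab}_{free}$. Enlarging $C_0'$ to simultaneously absorb all these finite bounds and the quantities $\max_{q_1,q_2,q_3}\|\eta(q_1,q_2)^{\phi(q_3)}\|_2$, handling the degenerate cases ($u=0$, or $2^n\|u\|_2$ bounded) by letting $\gamma$ be empty, and finally pruning any incidental self-intersections introduced in the lift (each one localized within a single $w_i$-window) completes the proof.
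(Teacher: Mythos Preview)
Your overall strategy mirrors the paper's proof closely: build a monotone lattice path in $\Z^d \cong N^{ab}_{free}$ tracking the line through $u$, lift to $E$ by replacing each unit step with a fixed word in $S^{\pm1}$, and then group macro-moves into blocks to obtain coarse monotonicity. Your Bresenham error bound plays exactly the role of the paper's Lemma~\ref{euclideanmonotonicity}, and your verification of properties (1)--(3) for the \emph{unpruned} lift is fine.

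The genuine gap is in your last clause. The assertion that self-intersections of the lift are ``each one localized within a single $w_i$-window'' is unjustified and in general false. If an interior vertex of the $(k{+}1)$-th macro-move coincides in $\Gamma$ with an interior vertex of the $(k'{+}1)$-th macro-move, then $g_k^{-1}g_{k'}\in N$ has word length at most $2\max_i|w_i|$, so its image $p_{k'}-p_k$ in $N^{ab}_{free}$ lies in a fixed finite set; together with $\ell^1$-monotonicity of the Bresenham path this gives only $|k-k'|\le L$ for some constant $L$, not $L=0$. Consequently loop erasure can delete stretches spanning several macro-moves, and your coarse-monotonicity inequality $\pair{p_{(i+1)K_0}-p_{iK_0}}{u/\|u\|_2}\ge 1$, which was proved for the original block endpoints, no longer applies directly to whatever segmentation the pruned simple path inherits. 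The paper handles this in the opposite order: it first erases loops (obtaining segments $\tilde\beta_i$, each a subpath of a single original segment), then regroups these into blocks of size $M'$, and only then establishes coarse monotonicity by using Lemma~\ref{displacementedgelength} to compare $D(\beta_0\cdots\beta_i)$ with the displacement of a suitable prefix of the \emph{original} path, where the $\Z^d$-level monotonicity (Lemma~\ref{euclideanmonotonicity}) applies. Your argument needs an analogous step relating the pruned path back to the unpruned one; merely enlarging $K_0$ does not obviously suffice, since loops can chain and the surviving block boundaries need not sit at the original points $p_{iK_0}$.
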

This lemma will be proven in Section \ref{coarsepathssection}.

For now, we define the edge weights, very similarly to the first construction. First, given a Carnot-Carath\'eodory metric with associated
norm $\Phi$ on $\g^{ab}$, let $B \subset \g^{ab} \cong \R^d$ be the unit ball of $\Phi$. Let $\{ b_n \}_{n \ge 0}$ be a countable
dense subset of the boundary of $B$. For each $n$, let $\gamma_n$ be the path given in Lemma \ref{coarsepaths} associated to 
the vector $b_n$ and the natural number $n$. Let $E_n$ be the set of edges in $E$ which share at least one vertex with the path $\gamma_n$.

Pick $h > 0$ small enough so that $B_2(0,h) \subset B$ and then choose $K > 0$ large enough so that 
\[ 
   \max_{f \in S, q, q_1, q_2, q_3 \in Q} \frac{ \| D(f)^{\phi(q)} \|_2 + \| \eta(q_1,q_2)^{\phi(q_3)} \|_2 }{ K - 8 C_0' h^{-1} } \le h.
\]
Then define $\eta_n : E_n \to \R_+$ by
\begin{equation*}
  \eta_n(f) = \begin{cases}
                        \frac{\pair{D( \beta_0 \cdots \beta_i) - D( \beta_0 \cdots \beta_{i-1}) }{ \frac{b_n}{\|b_n\|_2}}}
                        {\|b_n\|_2 |\beta_i|}                   & f \in \beta_i,\\
                        K,                                              & \text{otherwise}.
                     \end{cases}
\end{equation*}
where the $\beta_i$ are the subpaths of $\gamma=\gamma_n$ alluded to in Lemma \ref{coarsepaths} (the dependence of $\beta_i$ 
on $n$ is suppressed in the notation).

Lastly, we superimpose randomly sprinkled translated copies of the $\eta_n$ exactly as in the first construction; that is, define 
$\{Z_x\}_{x \in \Gamma}, \{Y_x\}_{x \in \Gamma}$, $X_f$, $x_f$, and $n_f$ exactly as above and then define $w: E_n \to \R_+$
\begin{equation*}
  w(f) = \begin{cases}
              T_{x_f} \eta_{n_f} (f) & X_f \ne \emptyset \\
              K                               & \text{otherwise}.
            \end{cases}
\end{equation*}
By the same arguments as above, these weights are well-defined, ergodic, and uniformly bounded above.
Moreover, the monotonicity condition in Lemma \ref{coarsepaths} implies that each edge has weight at least
\[
   \min_{b \in B} \frac{1}{C_0'^2 \| b \|_2} > 0,
\]
which is to say that $T$ is bi-Lipschitz to the word metric, and we can apply Theorem \ref{cantrellfurmanthm}.

\section{Proof of Theorem \ref{reducedthm} in the general case}
Once again, the proof that the correct norm is induced on $\g^{ab}$ can be reduced to showing the conditions in Proposition 
\ref{normreductionprop}. The proof of the second condition is the same argument as in the simplified case.
(We construct the desired paths by traveling along the center of $N$ until we reach the first fast path that goes in the correct direction,
and then we travel back along the center of $N$.
 We have the same volume growth estimates that we used above as long as 
we assume $\Gamma$ is not virtually $\Z$. In the virtually $\Z$ case, our limit shapes are norms on $\R$, and since all norms on
$\R$ are scalar multiples of each other, we can achieve any desired norm we like by appropriately scaling the weights of, say,
the deterministic FPP which assigns weight 1 to each edge and gives $T=d$.)

For the first condition of Proposition \ref{normreductionprop}, the spirit of the proof is the same, but we have to deal with more error terms.

First, we consider a fast subpath $\gamma$ of $E$ (that is, a path which does not contain any edges of length $K$), and again we note that
it is (up to translation) a subpath of some $\gamma_n$. We further decompose the path into
\[
   \gamma = \alpha \beta_j \cdots \beta_i \omega,
\]
where the $\beta_i$ are the subpaths alluded to in Lemma \ref{coarsepaths} and $\alpha$ and $\omega$ are subpaths of
$\beta_{j-1}$ and $\beta_{i+1}$ respectively.

Now, by Equation \eqref{eq:noncommdisplacement}, we know that
\[
   D(\beta_j \cdots \beta_i)^{\phi(\beta_0 \cdots \beta_{j-1})} = 
   [D(\beta_0 \cdots \beta_i) - D(\beta_0 \cdots \beta_j)] 
   - \eta( \beta_0 \cdots \beta_{j-1}, \beta_j \cdots \beta_i)^{\phi(\beta_0 \cdots \beta_i)^{-1}}.
\]
We can further decompose $[D(\beta_0 \cdots \beta_i) - D(\beta_0 \cdots \beta_j)]$ into its components parallel to $b_n$ and perpendicular to
$b_n$:
\[
   [D(\beta_0 \cdots \beta_i) - D(\beta_0 \cdots \beta_j)] = 
   [D(\beta_0 \cdots \beta_i) - D(\beta_0 \cdots \beta_j)]_{\parallel} +
   [D(\beta_0 \cdots \beta_i) - D(\beta_0 \cdots \beta_j)]_{\perp}.
\]
Now, by our definition of $\eta_n$ we have
\[ 
   T(\beta_j \cdots \beta_i) = 
   \pm \frac{1}{\|b_n\|_2} \pair{D( \beta_0 \cdots \beta_i) - D( \beta_0 \cdots \beta_{j-1}) }{ \frac{b_n}{\|b_n\|_2}},
\]
where we have used coarse monotonicity of $\gamma$. The possible minus sign comes from the fact that we may be traveling forward or 
backward along $\gamma$ (one may check that, since we chose $\tilde{Q}$ to be symmetric, if $\beta$ is any path in $E$ and
$\bar{\beta}$ is $\beta$ with its orientation
reversed, $D(\bar{\beta}) = -D(\beta)$).
Thus, we have
\[
   \frac{ [D(\beta_0 \cdots \beta_i) - D(\beta_0 \cdots \beta_j)]_{\parallel} }{ T(\beta_j \cdots \beta_i) } = \pm b_n \in B.
\]
Moreover, since $\gamma$ stays near to the line through $b_n$ we have
\[
   \frac{ [D(\beta_0 \cdots \beta_i) - D(\beta_0 \cdots \beta_j)]_{\perp} }{ 2 C_0' h^{-1} } \in B_2(0,h) \subset B,
\]
and by assumptions on $C_0'$ we have
\[
   \frac{-\eta(\beta_0 \cdots \beta_{j-1}, \beta_j \cdots \beta_i) }{ C_0' h^{-1} } \in B_2(0,h) \subset B.
\]
Hence by Lemma \ref{convexlemma}
\[
   \frac{D(\beta_j \cdots \beta_i)^{\phi(\beta_0 \cdots \beta_{j-1})}}{ T(\beta_j \cdots \beta_i) + 3C_0' h^{-1} } \in B,
\]
and then by conjugation-invariance of $B$ we have
\[
   \frac{D(\beta_j \cdots \beta_i)}{ T(\beta_j \cdots \beta_i) + 3C_0' h^{-1}} \in B.
\]
Now, since $\alpha$ and $\omega$ are subpaths of  $\beta_{i-1}$ and $\beta_{j+1}$, we have
\[
   \frac{D(\alpha)}{C_0' h^{-1}} , \frac{D(\omega)}{C_0' h^{-1}} \in B_2(0,h) \subset B,
\]
and hence by Lemma \ref{convexlemma}
\begin{align*}
   \frac{D(\alpha \beta_j \cdots \beta_i \omega)}{ T(\beta_j \cdots \beta_i) + 7 C_0' h^{-1} } 
   = \frac{ D(\alpha) + D(\beta_j \cdots \beta_i)^{\phi(\cdot)} 
   + \eta(\cdot,\cdot)^{\phi(\cdot)} + D(\omega)^{\phi(\cdot)} + \eta(\cdot, \cdot)^{\phi(\cdot)}}
   { C_0' h^{-1} + T(\beta_j \cdots \beta_i) + 3 C_0' h^{-1} + C_0' h^{-1} + C_0' h^{-1} + C_0' h^{-1}} \in B,
\end{align*}
where we have again used conjugation-invariance of $B$.
Moreover, for slow edges $f$, by choice of $K$ we have
\[
   \frac{ D(f) + \eta(\cdot, \cdot)^{\phi(\cdot)} }{ T(f) - 8 C_0' h^{-1} } \in B_2(0,h) \subset B.
\]

Writing an arbitrary path $\gamma$ as a concatenation of fast paths and slow edges and using Propositon \ref{noncommdisplacement} gives
\[
   D(\gamma) = 
   \sum_{f \mbox{ slow edges}} (D(f) + \eta( \cdot, \cdot )^{\phi(\cdot)})^{\phi(\cdot)} 
   + \sum_{\gamma' \mbox{ fast paths}} (D(\gamma') + \eta(\cdot, \cdot)^{\phi(\cdot)})^{\phi(\cdot)},
\]
and so using the above and Lemma \ref{convexlemma} gives
\[
   \frac{D(\gamma)}
   {\sum_{f \mbox{ slow edges}} (T(f) - 8 C_0' h^{-1}) 
   + \sum_{\gamma' \mbox{ fast paths}} (T(\gamma') + 8 C_0' h^{-1})} \in B,
\]
and since there is at most one more fast path than there are slow edges, we conclude
\[ 
   \frac{D(\gamma)}{T(\gamma) + 8 C_0' h^{-1}} \in B.
\]

The rest of the proof is just as in the above argument.

\section{Proof of Lemma \ref{coarsepaths}} \label{coarsepathssection}
To prove the existence of ``nice paths'' we want to approximate the nice paths in $\Z^d \cong N^{ab}_{free}$ from Proposition \ref{pathsprop}
and prove that our approximation retains the nice properties ``coarsely''.

First, we prove a lemma which will help control error terms:
\begin{lemma} \label{displacementedgelength}
There exists a constant $K'$ such that for any paths $\alpha, \beta$ in $E$, we have
\[
   \| D(\alpha \beta) - D(\alpha) \|_2 \le K' |\beta|.
\]
\end{lemma}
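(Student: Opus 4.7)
The plan is to unwind the definitions using the cocycle-like formula \eqref{eq:noncommdisplacement} for $D$ on concatenations, and observe that every ``atomic'' contribution is bounded by a constant depending only on $\Gamma, S, N, \tilde{Q}$.

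First, I would apply \eqref{eq:noncommdisplacement} directly to write
\[
   D(\alpha\beta) - D(\alpha) = D(\beta)^{\phi(\alpha)} + \eta(\alpha,\beta)^{\phi(\alpha\beta)^{-1}}.
\]
The second term is bounded in norm by $C_0'$ thanks to the last bullet in Lemma \ref{coarsepaths} (the set $\{\eta(q_1,q_2)^{\phi(q_3)} : q_1,q_2,q_3 \in Q\}$ is a finite subset of $\R^d$, and $\alpha\beta^{-1}$ contributes only a value in $Q$). So the task reduces to estimating $\|D(\beta)^{\phi(\alpha)}\|_2$ in terms of $|\beta|$.

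Second, write $\beta = f_1 f_2 \cdots f_k$ as a concatenation of $k = |\beta|$ single edges and apply Proposition \ref{noncommdisplacement}:
\[
   D(\beta) = D(f_1) + \sum_{i=1}^{k-1}\Bigl(D(f_{i+1}) + \eta(f_1\cdots f_i, f_{i+1})^{\phi(f_{i+1})^{-1}}\Bigr)^{\phi(f_1\cdots f_i)^{-1}}.
\]
Now I would note that $D(f_j)$ for any oriented edge $f_j$ takes only finitely many values: if $f_j$ corresponds to traversing a generator $s^{\pm 1} \in S \cup S^{-1}$ starting at some group element, then $\tilde D(f_j)$ depends only on $s$, and hence so does $D(f_j)$. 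Since $S$ and $Q$ are both finite, and $\phi(q)$ ranges over a finite set of invertible linear maps on $\g^{ab} \cong \R^d$, there is a uniform constant $M$ (depending only on $\Gamma, S, N, \tilde{Q}$) bounding the Euclidean norm of every summand in the displayed formula, even after conjugation by any additional $\phi(q)$.

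Third, conjugating the whole sum by $\phi(\alpha)$ and applying the triangle inequality gives $\|D(\beta)^{\phi(\alpha)}\|_2 \le kM = |\beta| M$, and combining with the uniform bound on the $\eta$ term gives the conclusion with $K' := M + C_0'$ (or any larger constant, since $|\beta| \ge 1$ for nontrivial $\beta$, and the case $|\beta|=0$ is trivial because then $\beta$ is a point, $D(\beta)=0$, and $\eta$ does not contribute either). I do not anticipate a real obstacle here; the only delicate point is checking that the atomic contributions $D(f_j)^{\phi(q)}$ are genuinely drawn from a finite set — which follows immediately from $|S|,|Q|<\infty$ and the factorization $\Gamma/\widetilde{[N,N]} \leftrightarrow Q \times N^{ab}_{free}$ described in Section \ref{generalconstruction}.
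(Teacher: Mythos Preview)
Your argument is essentially correct, but there is one citation you should fix: you invoke the bound $\max_{q_1,q_2,q_3}\|\eta(q_1,q_2)^{\phi(q_3)}\|_2 \le C_0'$ from Lemma \ref{coarsepaths}, yet in the paper Lemma \ref{displacementedgelength} is proved \emph{first} and is used as an ingredient in the proof of Lemma \ref{coarsepaths} (indeed, $C_0'$ is ultimately defined in terms of $K'$). So the reference is circular. This is harmless mathematically, since the fact you actually use---that the finite set $\{\eta(q_1,q_2)^{\phi(q_3)} : q_i \in Q\}$ has a uniform norm bound---is immediate from $|Q|<\infty$, and you even say so. Just replace the appeal to $C_0'$ by that direct observation.

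Your route to bounding $\|D(\beta)^{\phi(\alpha)}\|_2$ differs from the paper's. You decompose $\beta$ into its $|\beta|$ edges, apply Proposition \ref{noncommdisplacement}, and observe that each edge-displacement $D(f_j)$ and each cocycle term lives in a finite set depending only on $S$ and $Q$; summing gives a bound linear in $|\beta|$. The paper instead bounds $\|D(\beta)\|_2$ in one stroke by noting that $N^{ab}_{free}$, being finite index in $\Gamma/\widetilde{[N,N]}$, is undistorted, so the Euclidean norm on $N^{ab}_{free}$ is comparable to the ambient word length, giving $\|D(\beta)\|_2 \lesssim |\beta| + \mathrm{const}$. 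Your approach is more hands-on and avoids the undistortion fact; the paper's is shorter once that fact is quoted. Both are fine.
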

\begin{proof}
By Equation \eqref{eq:noncommdisplacement}, we know that 
\[
   D(\alpha \beta) - D(\alpha) = D(\beta)^{\phi(\alpha)} + \eta(\alpha, \beta)^{\phi(\alpha \beta)^{-1}}.
\]
First, since the image of $Q$ in $\Aut(N^{ab}_{free}) \cong SL_d^{\pm}(\Z)$ is a finite family of bounded operators on $\R^d$, there is
some constant $M < \infty$ such that
\[
   \| v^{\phi(q)} \|_2 \le M \|v\|_2   
\]
for all $q \in Q, v \in \R^d$. Thus we have $\| D(\beta)^{\phi(\alpha)} \|_2 \le M \| D(\beta) \|_2$.

Next, since $N^{ab}_{free}$ is finite index in $\Gamma/\widetilde{[N,N]}$, it is undistorted,  
which is to say that any word metric on
$N^{ab}_{free}$ is bi-Lipschitz to the restriction to $N^{ab}_{free}$ of any word metric on $\Gamma/\widetilde{[N,N]}$.
(This can be seen using Schreier generators for $N^{ab}_{free}$, see e.g. Theorem 14.3.1 in ~\cite{KarMer}). In particular,
this means that the Euclidean norm $\| \cdot \|_2$ on $N^{ab}_{free}$ is bi-Lipschitz to the metric induced by the Cayley graph
on $\Gamma/\widetilde{[N,N]}$. Hence 
\[
   \| D(\beta) \|_2 \le K'' | D(\beta) | = K'' |\tilde{D}(\beta) \tilde{q}(\beta)^{-1}| \le K''( |\beta| + \max_{\tilde{q} \in \tilde{Q}} |\tilde{q}| ).
\]

Lastly, since $Q$ is finite, we have a uniform bound on the norm of the second term, that is,
\[
   \max_{q_1,q_2,q_3 \in Q} \| \eta(q_1,q_2)^{\phi(q_3)} \|_2 < \infty.
\]
Putting everything together gives
\[
   \| D(\alpha \beta) - D(\alpha) \|_2 \le MK'' |\beta| + \mathrm{const.},
\]
and since every nonempty $\beta$ has $|\beta| \ge 1$ we can easily adjust to get a finite $K'$ which satisfies the desired inequality.
\end{proof}

Now, we construct the paths.
Given $u$ and $n$, first consider the path $\gamma_n$ in $\Z^d \cong N^{ab}_{free}$ using the standard generators $e_i$ of $\Z^d$
given by Proposition \ref{pathsprop}. Next, for each edge $e$ of the path in the standard generators, choose a path $\beta'$ in the Cayley graph for 
$\Gamma/\widetilde{[N,N]}$ induced by the image of $S$ which starts one vertex of $e$ and ends at the other; pick these paths to satisfy
\begin{equation} \label{eq:segmentbound}
   |\beta'| \le \max_{i=1,...,d} d'(1,e_i) =: C
\end{equation}
where $d'$ is the word metric on $\Gamma/\widetilde{[N,N]}$ induced by the image of $S$.
We then lift to a path $\tilde{\beta}'_0 \cdots \tilde{\beta}'_{N-1}$ in $E$. Note that by the properties guaranteed by Proposition \ref{pathsprop} 
we have that: 
\begin{equation} \label{eq:hitstarget}
   \| D(\tilde{\beta}'_0 \cdots \tilde{\beta}'_{N-1}) - 2^n u) \|_2 \le \frac{\sqrt{d}}{2},
\end{equation}
\begin{equation} \label{eq:nottoolong}
   |\tilde{\beta}'_0 \cdots \tilde{\beta}'_{N-1}| \lesssim 2^n \|u\|_2,
\end{equation}
and
\begin{equation} \label{eq:nearparallel}
   \| D(\tilde{\beta}'_0 \cdots \tilde{\beta}'_i) - \proj_u D(\tilde{\beta}'_0 \cdots \tilde{\beta}'_i) \|_2 \le C_0
\end{equation}
for all $i$. If $\alpha$ is a general subpath of $\tilde{\beta}'_0 \cdots \tilde{\beta}'_{N-1}$ starting at $1$, it is of the form 
$\alpha = \tilde{\beta}'_0 \cdots \tilde{\beta}'_i \alpha'$ where $\alpha'$ is a subpath of $\tilde{\beta}'_{i+1}$, and hence
combining Lemma \ref{displacementedgelength} together with Equations \eqref{eq:segmentbound} and \eqref{eq:nearparallel} gives
\begin{equation} \label{eq:stillnearparallel}
   \| D(\alpha) - \proj_u D(\alpha) \|_2 \le C_0 + K' C.
\end{equation}

\begin{figure}[t]
   \centering
   \includegraphics{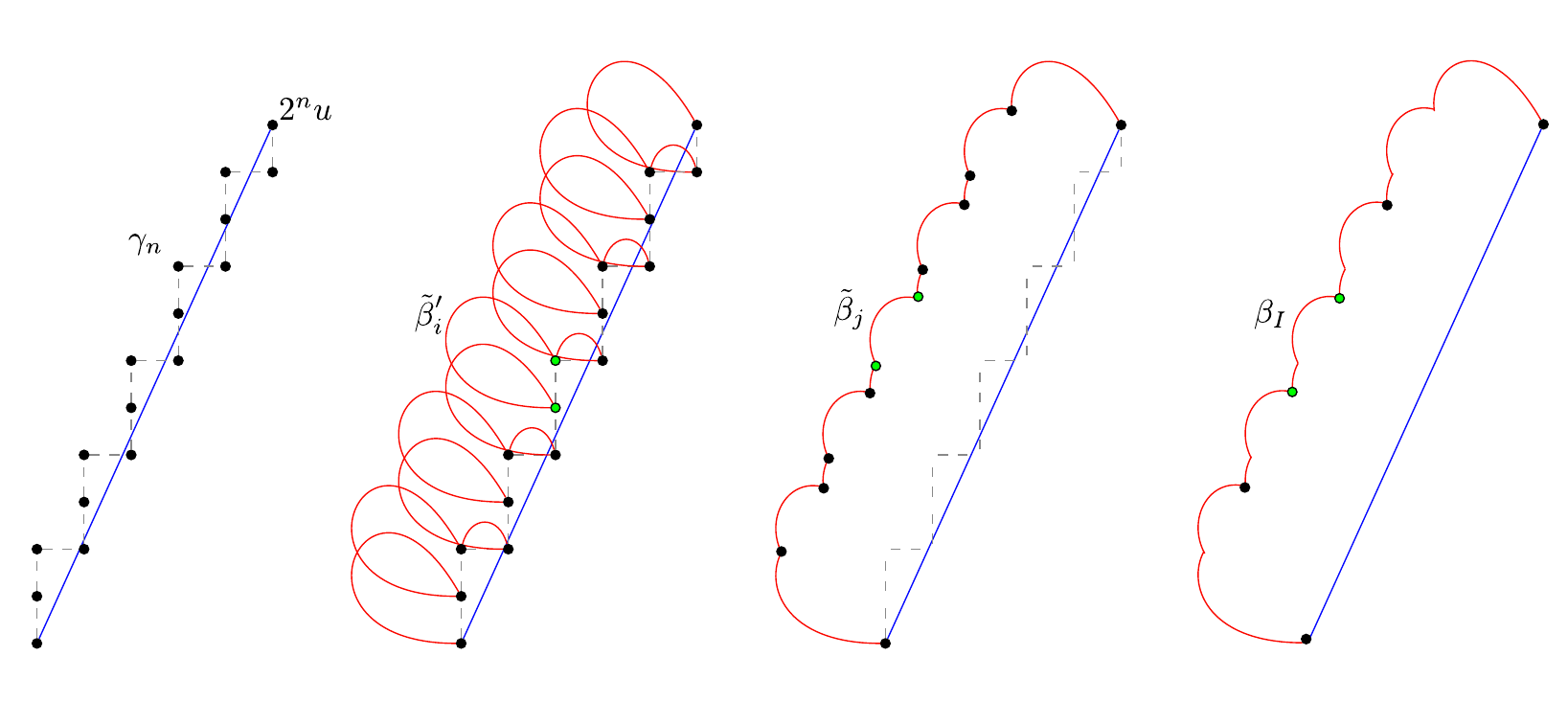}
   \caption{Construction of ``nice paths''. (The ``lifting'' step is omitted here to aid visualization). }
\end{figure}

Thus, $\tilde{\beta}'_0 \cdots \tilde{\beta}'_{N-1}$ satisfies many of the properties we desire. However, it may contain loops, and it may not 
satisfy coarse monotonicity.
So first erase loops to get a 
simple path $\tilde{\beta}_0 \cdots \tilde{\beta}_{N'-1}$. The particular manner in which loops are erased does not matter, so long as the resulting
path is a simple path with the same starting and ending point which is obtained from the original path by deleting subpaths. 
If entire segments $\tilde{\beta}'_i$ are deleted, the number $N'$ of new segments $\tilde{\beta}_0,...,\tilde{\beta}_{N'-1}$ need not be the 
same as $N$ the number of original segments, and some reindexing may be required so that we don't skip indices;
however, every $\tilde{\beta}_i$ is composed of subpaths of a single $\tilde{\beta}'_j$, $j$ depending on $i$.
Thus, each segment $\tilde{\beta}_i$ of the new path still consists of at most $C$ edges. 

Moreover, since the set of displacements of subpaths
of the loop-erased path is a subset of the set of displacements of subpaths of the original path, Equation \eqref{eq:stillnearparallel} 
holds for the new path as well. Equations \eqref{eq:hitstarget} and \eqref{eq:nottoolong} also clearly pass to the loop-erased path as well.

Now we obtain coarse monotonicity.
First we prove the following version of coarse monotonicity for the original Euclidean paths:
\begin{lemma} \label{euclideanmonotonicity}
There exists some $k' > 0$ and $M < \infty$ such that for any $n$, any subpath $\gamma$ of $\gamma_n$ ($\gamma_n$ the path 
in the standard Cayley graph of $\Z^d$ from
Proposition \ref{pathsprop} associated to $n$ and $u$) of length at least $M$ satisfies
\[ \pair{ D(\gamma) }{ \frac{u}{ \|u\|_2 }} \ge k' |\gamma|. \]
\end{lemma}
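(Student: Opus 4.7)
The plan is to exploit the three properties of $\gamma_n$ from Proposition \ref{pathsprop}---proximity within $C_0$ to the line through $u$ (Property 1), strictly positive inner product with $u$ on every subpath (Property 2), and $\ell^1$-minimality of the total edge count (Property 3)---to show that every subpath $\gamma$ of $\gamma_n$ satisfies
\[ \pair{D(\gamma)}{u/\|u\|_2} \ge \frac{|\gamma|}{\sqrt{d}} - 2C_0, \]
which yields the lemma with $k' = 1/(2\sqrt{d})$ and $M = 4\sqrt{d}\, C_0$, and (as a bonus) with constants independent of $u$ and $n$.

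The first key observation is a no-backtracking property: applied to any single edge $\pm e_i$ of $\gamma_n$, Property 2 forces $\pair{\pm e_i}{u} > 0$, so every edge of $\gamma_n$ is of the form $\mathrm{sgn}(\pi_i(u))\, e_i$ for some $i$ with $\pi_i(u) \ne 0$, and the opposite direction never appears anywhere in $\gamma_n$. Consequently, for any subpath $\gamma$ no cancellation occurs when summing edge-displacements, giving $|\gamma| = \|D(\gamma)\|_1$. (Property 3 is in fact not needed once one has this.)

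Now decompose $D(\gamma) = D(\gamma)_{\parallel} + D(\gamma)_{\perp}$ into components along and orthogonal to $u$. By Property 1 both endpoints of $\gamma$ lie within $C_0$ of the line through $0$ and $u$, so $\|D(\gamma)_{\perp}\|_2 \le 2C_0$; and Property 2 applied to $\gamma$ itself forces $\pair{D(\gamma)}{u/\|u\|_2} = \|D(\gamma)_{\parallel}\|_2$. Combining with $\|v\|_1 \le \sqrt{d}\,\|v\|_2$,
\[ |\gamma| = \|D(\gamma)\|_1 \le \sqrt{d}\bigl(\|D(\gamma)_{\parallel}\|_2 + \|D(\gamma)_{\perp}\|_2\bigr) \le \sqrt{d}\, \|D(\gamma)_{\parallel}\|_2 + 2\sqrt{d}\, C_0, \]
which rearranges to the desired estimate; taking $M = 4\sqrt{d}\, C_0$ absorbs the additive error into at most half of the main term. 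The only subtle point is uniformity in $u$: a naive per-edge bound fails because $|\pi_i(u)|/\|u\|_2$ may be arbitrarily small when some coordinate of $u$ is tiny, but the transverse bound from Property 1 is an absolute $O(1)$ (independent of $u$), and this averaging over long subpaths is exactly what makes both constants uniform---as is needed for the application in Lemma \ref{coarsepaths}.
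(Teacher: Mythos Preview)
Your proof is correct and is genuinely cleaner than the paper's. The key move that sets your argument apart is extracting the equality $|\gamma| = \|D(\gamma)\|_1$ from the no-backtracking observation (Property~2 applied edgewise); once you have that, the inequality $\|v\|_1 \le \sqrt{d}\,\|v\|_2$ together with the orthogonal decomposition and the transverse bound $\|D(\gamma)_\perp\|_2 \le 2C_0$ finishes in one line, with explicit constants $k' = 1/(2\sqrt{d})$ and $M = 4\sqrt{d}\,C_0$.

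The paper's route is more indirect: it first argues that some coordinate direction $e_{i_0}$ has $|\pi_{i_0}(u)| \ge \|u\|_2/\sqrt{d}$, then shows via the $C_0$-tube property that any subpath avoiding all ``heavy'' directions (those with $\langle e_i, u/\|u\|_2\rangle \ge 1/\sqrt{d}$) has bounded length, hence every block of $C$ consecutive edges contains at least one heavy edge, and finally sums these contributions. This yields the same qualitative statement but with less explicit constants and a longer calculation. Your argument avoids the case split on heavy versus light directions entirely by working with the total displacement rather than individual edge contributions. Both rely essentially on the same two ingredients---no backtracking and the transverse $2C_0$ bound---but your packaging is more efficient. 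As you note, Property~3 is indeed unnecessary once Property~2 gives no-backtracking edgewise.
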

\begin{proof}
First, we claim that there is a constant $C$ depending only on $d$ such that for any subpath of any $\gamma_n$ of edge-length at
least $C$, at least one edge $f$ of the path satisfies
\[ \pair{D(f)}{\frac{u}{\|u\|_2}} \ge \frac{1}{\sqrt{d}}. \]
Heuristically, this is because the path cannot travel too long in directions perpendicular to $u$ while staying close to the line
through $0$ and $u$.
More rigorously, for some coordinate $i_0 \in \{1,...,d\}$ we have
\[ 
   | \pi_{i_0}(u) | \ge \frac{ \| u \|_2 }{ \sqrt{d} }.
\]
For notational convenience, let's replace some of the standard basis vectors with their opposites
to ensure that $\pair{u}{e_i} = |\pi_i(u)| \ge 0$ for all $i$, and further, let's reindex so that $e_1,...,e_{l}$ satisfy 
$c_i:=\pair{ e_i }{\frac{u}{\|u\|_2}} < \frac{1}{\sqrt{d}}$
and $e_{l+1},...,e_d$ satisfy $c_i \ge \frac{1}{\sqrt{d}}$ for some $0 \le l < d$.

Now let $\gamma$ be a subpath of $\gamma_n$ starting at $x \in \Z^d$ and ending at $y \in \Z^d$, and assume that
for every edge $f$ in $\gamma$,
\[ \pair{D(f)}{\frac{u}{\|u\|_2}} < \frac{1}{\sqrt{d}}. \] 
By Proposition \ref{pathsprop},
$x$ and $y$ must be within Euclidean distance $C_0$ of the line $L$ passing through $0$ and $b_n$ in $\R^n$. Moreover, since we only
travel in directions with low weights, we have $y = x + n_1 e_1 + \cdots + n_l e_l$ for some positive integers $n_i$. 
Now, the distance from $y$ to $L$ is
\begin{align*}
   \mathrm{dist}(y,L) &= \left\| x + n_1 e_1 + \cdots + n_l e_l -\pair{x + n_1 e_1 +\cdots +n_l e_l}{\frac{u}{\|u\|_2}} \frac{u}{\|u\|_2} \right\|_2 \\
   &\ge    \left\| n_1 e_1 + \cdots + n_l e_l - \pair{n_1 e_1 + \cdots + n_l e_l }{\frac{u}{\|u\|_2}} \frac{u}{\|u\|_2} \right\|_2 - \mathrm{dist}(x,L),
\end{align*}
so, since both distances are less than $C_0$, we have 
\begin{align*}
   2C_0 &\ge \left\| n_1 e_1 + \cdots + n_l e_l - \pair{n_1 e_1 + \cdots + n_l e_l }{\frac{u}{\|u\|_2}} \frac{u}{\|u\|_2} \right\|_2 \\
             &= \left\| n_1 e_1 + \cdots n_l e_l - (n_1 c_1 + \cdots n_l c_l)\left(\sum_{i=1}^d c_i e_i \right) \right\|_2 \\
             &\ge \sqrt{ \sum_{i=1}^l \big( n_i - (n_1 c_1 + \cdots + n_l c_l) c_i \big)^2} \\
             &\ge C' \sum_{i=1}^l \big( n_i - (n_1 c_1 + \cdots + n_l c_l) c_i \big) \\
             &\ge C' \sum_{i=1}^l \left( n_i - (n_1 + \cdots + n_l)\frac{1}{d} \right) = C' \left(1-\frac{l}{d}\right) \sum_{i=1}^l n_i \\
             &\ge \frac{C'}{d} \sum_{i=1}^l n_i = \frac{C'}{d} |\gamma|.
\end{align*}
To go from the third to the fourth line, we used that the Euclidean norm is equivalent to the $\ell_1$ norm on $\R^d$,
to go from the fourth to the fifth line, 
we used that $0<c_i<\frac{1}{\sqrt{d}}$ for $i=1,...l$, and to get to the final line we used that $l \le d-1$.

Thus, any subpath of $\gamma_n$ which consists of at least $C := \lfloor\frac{2 C_0 d}{ C' }\rfloor + 1$ edges contains at least one edge
with displacement at least $1/\sqrt{d}$ in the $u$ direction.

Finally, this implies that, for any subpath $\gamma$ of $\gamma_n$ with length at least $2C$ we have
\[
   \pair{D(\gamma)}{\frac{u}{\|u\|_2}} \ge k \lfloor \frac{ |\gamma| }{ C } \rfloor \ge \frac{ k }{2C} |\gamma|.
\]
That is, we have the lemma  with $M = 2C$ and $k' = \frac{k}{2C}$.

\end{proof}

Now take $M' = \max(M, \left\lceil \frac{2K'C + 1}{k'} \right\rceil)$. We then define a new segmentation 
$\beta_0,...,\beta_{\lfloor N'/M' \rfloor - 1}$ of the path by
\[
   \beta_i = \tilde{\beta}_{M'i} \tilde{\beta}_{M'i + 1} \cdots \tilde{\beta}_{M'i + (M'-1)}
\]
if $i < \lfloor N'/M' \rfloor - 1$ and
\[
   \beta_i = \tilde{\beta}_{M'i} \cdots \tilde{\beta}_{N' - 1}
\]
if $i = \lfloor N'/M' \rfloor - 1$.
Note that we have
\[
   |\beta_i| \le 2M' C.
\]
To show that this segmentation of the path gives coarse monotonicity, we have to compare with the original path before erasing loops.
To this end, for a given $i < \lfloor N'/M' \rfloor - 1$, let $I$ be such that $\tilde{\beta}_{(M' + 1)i}$ is a subpath of $\tilde{\beta}'_I$;
that is, the index such that the next edge in $\beta_1 \cdots \beta_{\lfloor N'/M' \rfloor - 1}$ after the segment $\beta_i$
lies in $\tilde{\beta}'_I$. For $i = \lfloor N'/M' \rfloor - 1$, we set $I = N$.
We also set $J$ to be such that the last edge in the path $\beta_{i-1}$ lies in $\tilde{\beta}'_J$; that is, $\tilde{\beta}_{(i-1)M' - 1}$
is a subpath of $\tilde{\beta}'_I$. If $i=0$, we set $J=0$.

Now note that there exists some (possibly empty) subpath $\alpha$ of $\tilde{\beta}'_J$ such that 
\[
   D(\beta_0 \cdots \beta_{i-1} \alpha) = D(\tilde{\beta}'_0 \cdots \tilde{\beta}'_J)
\]
and there exists some subpath $\omega$ of $\tilde{\beta}'_I$ such that
\[
   D(\beta_0 \cdots \beta_i \omega) = D(\tilde{\beta}'_0 \cdots \tilde{\beta}'_I).
\]
Hence, by Lemma \ref{displacementedgelength} and Equation \eqref{eq:segmentbound}, we have that
\begin{equation} \label{eq:perturbedmonotone}
   \| D(\beta_0 \cdots \beta_i) - D(\tilde{\beta}'_0 \cdots \tilde{\beta}'_I) \|_2,
   \| D(\beta_0 \cdots \beta_{i-1}) - D(\tilde{\beta}'_0 \cdots \tilde{\beta}'_J) \|_2 \le K'C,
\end{equation}
which then implies that
\[
   \pair{ D(\beta_0 \cdots \beta_i) - D(\beta_0 \cdots \beta_{i-1}) }{\frac{u}{\|u\|}} 
   \ge \pair{ D(\tilde{\beta}'_0 \cdots \tilde{\beta}'_I) - D(\tilde{\beta}'_0 \cdots \tilde{\beta}'_J) }{ \frac{u}{\|u\|_2} } - 2K'C.
\]
Now, by construction each $\tilde{D}(\tilde{\beta}'_0 \cdots \tilde{\beta}'_i) \in N^{ab}_{free}$, and hence we have
\[
   D(\tilde{\beta'}_0 \cdots \tilde{\beta'}_I) - D(\tilde{\beta}'_0 \cdots \tilde{\beta}'_J) = D(\tilde{\beta}'_{J+1} \cdots \tilde{\beta}'_I),
\]
and then since $D(\tilde{\beta}'_{J+1} \cdots \tilde{\beta}_I)$ is the displacement of a subpath of the path $\gamma_n$ 
(in the \emph{standard} Cayley graph of $\Z^d$)
with edge length at least $I - (J+1) \ge M' \ge M$, Lemma \ref{euclideanmonotonicity} then gives
\[
   \pair{ D(\tilde{\beta}'_0 \cdots \tilde{\beta}'_I) - D(\tilde{\beta}'_0 \cdots \tilde{\beta}'_J) }{ \frac{u}{\|u\|_2} }
   \ge k'M' \ge 2K'C + 1,
\]
and so combining with Equation \eqref{eq:perturbedmonotone} gives
\[
   \pair{ D(\beta_0 \cdots \beta_i) - D(\beta_0 \cdots \beta_{i-1}) }{\frac{u}{\|u\|}} \ge 2K'C + 1 - 2K'C = 1.
\]

Thus, taking
\[
   C_0' := \max\left( \sqrt{d}/2, C_0 + K'C, 2M'C, 1, \max_{q_1,q_2,q_3 \in Q} \eta(q_1,q_2)^{\phi(q_3)}\right)
\]
and $\gamma := \beta_0 \cdots \beta_{\lfloor N'/M' \rfloor - 1}$
gives the Lemma as desired. \qed

\appendix
\appendixpage

\section{ Carnot-Carath\'eodory metrics and the associated graded Lie group} \label{liegroupconstructions}

 In this section we explain the construction needed to describe continuum limits of nilpotent groups, i.e.
 the associated graded nilpotent Lie group associated to a finitely generated virtually nilpotent group, and Carnot-Carath\'eodory
 metrics on this group.
 As above, let $\Gamma$ be a finitely generated virtually nilpotent group, and let $N$ be a torsion-free nilpotent group of finite index.
 A theorem of Mal'cev (~\cite{Malcev}, see also Theorem 2.18 in ~\cite{Rag}) says that there exists a 
 simply connected nilpotent Lie group $G$ such that $N$ is
(isomorphic to) a cocompact lattice in $G$. 
Let $\g$ be the Lie algebra of $G$. Let $\g_{\infty}$ be the associated graded nilpotent Lie algebra, that is
 \[ \g_{\infty} := \bigoplus_{i \ge 1} \g^i / \g^{i+1}, \]
where $\g^1 := \g$, $\g^{i+1} := [\g^i, \g]$ is the descending central series for $\g$.
 Let $G_{\infty}$ be the unique simply connected Lie group which has $\g_{\infty}$ as its Lie algebra. We will refer to $G_{\infty}$
 as the \emph{graded nilpotent Lie group associated to} $\Gamma$.
 
 The map 
 \[ N \xhookrightarrow{} G \to G/[G,G] \cong \g/[\g,\g] =: \g^{ab} \]
 induces an inclusion $N^{ab}_{free} \to \g^{ab}$ and an isomorphism $N^{ab} \otimes \R \to \g^{ab}$.
 Now consider a norm $\Psi$ on $N^{ab} \otimes \R \cong \g^{ab}$.
 Note that $\g^{ab} = \g/[\g,\g]=\g^1/\g^2$ is a vector subspace of $\g_{\infty}$. 
 By left translation
 in $G_{\infty}$, the subspace $\g^{ab} \subset \g_{\infty}$ gives a left-invariant distribution on $TG_{\infty}$,
 and we can extend the norm to any vector in the distribution. Let us call a path 
 $\xi:[a,b] \to G_{\infty}$ \emph{admissible} if it is differentiable a.e. and a.e. $\xi'$ belongs to the support of the distribution. 
 We can then define the $\Psi$-length of $\xi$ to be
 \[ \Psi(\xi) := \int_a^b \Psi(\xi'(t)) dt, \]
 and this gives a metric on $G_{\infty}$ by
 \begin{equation}\label{met:CC}
  d_{\Psi}(x,y) := \inf \{ \Psi(\xi) : \xi \mbox{ is an admissible path from } x \mbox{ to } y \}. 
  \end{equation}
 The metric $d_{\Psi}$ is called the Carnot-Carath\'eodory metric on $G_{\infty}$ associated to $\Psi$.
 Since $\g^{ab}$ generates $\g_{\infty}$ as a Lie algebra, by Chow's theorem ~\cite{GromovCC},
 the topology induced on $G_{\infty}$ by $d_{\Psi}$ coincides with the usual topology on $G_{\infty}$.
 
 The above information is sufficient to understand the statement of the main theorem. The following further data is 
 required to understand Appendix \ref{fillgap}.
 The Lie algebra $\g_{\infty}$ has a one-parameter family of automorphisms $\delta_t : \g_{\infty} \to \g_{\infty}, t > 0$ given by setting
 \[ \delta_t(X) = t^i X\]
 if $X \in \g^i / \g^{i+1}$ and extending by linearity. This of course integrates to a 1-parameter family of automorphisms of $G_{\infty}$,
 which we also denote by $\delta_t$. We refer to $\delta_t$ as \emph{dilations}.
 
 Note that $d_{\Psi}$ is \emph{homogeneous} in the sense that $d_{\Psi}(\delta_t(x), \delta_t(y)) = t d_{\Psi}$.
 In the abelian case, $\Gamma = \Z^d$, $G_{\infty} = \R^d$, the dilations are scalar multiplication by $t$,
 and $d_{\Psi}$ is the usual metric induced by the norm $\Psi$ on $\R^d$.
 
 We now describe a sequence of maps $\Gamma \to G_{\infty}$ which will be Gromov-Hausdorff approximations 
 (see Appendix \ref{fillgap})
 when $\Gamma$ and 
 $G_{\infty}$ are endowed with the appropriate metrics. First, choose a collection of linear subspaces $V_1,...,V_k$ of $\g$ such that for each $i$
 \[ \g = V_1 \oplus \cdots \oplus V_i \oplus \g^{i+1}. \]
 Note that for each $i$, $V_i \subset \g^i$ and the natural map $V_i \to \g^i/\g^{i+1}$ is in isomorphism of vector spaces. Let
 \[ L : \g = V_1 \oplus \cdots \oplus V_k \to \oplus_{i = 1}^k \g^i/\g^{i+1} = \g_{\infty} \]
 be the associated linear isomorphism. Then we define a family of maps
 \[ \scl_t : \Gamma \hookrightarrow G \xrightarrow{\log} \g \xrightarrow{L} \g_{\infty} \xrightarrow{\delta_t} \g_{\infty} \xrightarrow{\exp} G_{\infty}. \]
 (Here $\log$ is the inverse of $\exp: \g \to G$, which is a diffeomorphism, since $G$ is a simply connected nilpotent Lie group).

\section{Understanding the limit norm $\Phi$ via $N^{ab}_{free}$} \label{grouptheory}

Our description of the construction of the limit norm $\Phi$ on $\g^{ab}$ differs slightly from the description in ~\cite{CantrellFurman}.
The two descriptions certainly coincide  in the case that $\Gamma=N$ is a torsion-free finitely generated nilpotent group with 
torsion-free abelianization. However, it's not immediately obvious that their description matches ours in the general
virtually nilpotent case. This section is primarily intended to show how our statement of Theorem \ref{cantrellfurmanthm} follows
from the following:
\begin{thm}{~\cite{CantrellFurman}}
Let $H$ be a finitely generated nilpotent group which is torsion-free and has torsion-free abelianization. Let $T$ be a
stationary random metric on $H$ which is \emph{inner} (see below) and bi-Lipschitz to a word metric on $H$.
Let $d_{\Phi}$ be the Carnot-Carath\'eodory
 metric on $G_{\infty}$ associated to the metric $\E T$, as in Section \ref{defnsandbackground} (with $\Gamma=N=H$). Then almost surely
 \[ (H, \frac{1}{n} T, 1) \tendsto{n}{\infty} (G_{\infty}, d_{\Phi}, 1) \]
 is the sense of pointed Gromov-Hausdorff convergence.
\end{thm}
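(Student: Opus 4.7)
The statement is essentially the main theorem of \cite{CantrellFurman} transcribed into the notation of the present paper, so my plan has two parts: first verify that the seminorm $\Phi$ extracted in Section~\ref{defnsandbackground} agrees with the seminorm that appears in the conclusion of their original theorem; then transcribe their convergence statement and, where needed, upgrade it to pointed Gromov--Hausdorff convergence.

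For the first part, observe that under the hypothesis that $H$ is torsion-free nilpotent with torsion-free abelianization we have $H = N$, $N^{ab}_{free} = H^{ab}$, and this abelianization embeds as a lattice in $\g^{ab}$ via the Mal'cev exponential composed with the quotient $H \to H^{ab}$. Our construction pushes the symmetric subadditive function $\E T(1,\cdot)$ down to $H^{ab}$ by taking infima over fibers and then invokes the subadditive limit theorem on a finitely generated abelian group to produce a seminorm on $H^{ab} \otimes \R = \g^{ab}$. The Cantrell--Furman construction processes the same data, but phrased directly on $\g^{ab}$ in exponential coordinates; in the torsion-free setting both routes yield the same seminorm, since the fibers of $H \to H^{ab}$ are precisely the cosets over which their subadditive averaging takes place, and the two limits must coincide by uniqueness of the asymptotic seminorm. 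This step is a careful but essentially formal unpacking of definitions.

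For the second part, \cite{CantrellFurman} show that the scaling maps $\scl_{1/n}: H \to G_{\infty}$ become asymptotically isometric with respect to $(\tfrac{1}{n} T, d_{\Phi})$ on each ball of bounded radius in the source $H$. To promote this to pointed Gromov--Hausdorff convergence one also needs, for every $R > 0$ and $\epsilon > 0$, that $\scl_{1/n}(H)$ is $\epsilon$-dense in the $d_{\Phi}$-ball of radius $R$ about $1 \in G_{\infty}$ for all sufficiently large $n$. This density statement is not explicit in \cite{CantrellFurman}; the fix, given in Appendix~\ref{fillgap}, combines Pansu's scaling limit for the word metric $d$ (which already guarantees density in $(G_{\infty}, d_{\|\cdot\|})$) with the bi-Lipschitz bound $k\,d \le T \le K\,d$ to transfer density from the Pansu target $(G_{\infty}, d_{\|\cdot\|})$ to the target $(G_{\infty}, d_{\Phi})$.

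The main obstacle is precisely this density step: an asymptotic isometry on balls of bounded radius in the source does not automatically cover balls of bounded radius in the target, and closing the gap requires Pansu's theorem as an auxiliary input, rather than following formally from convergence of subadditive functions. By contrast, matching our $\Phi$ with the Cantrell--Furman $\Phi$ is a routine comparison of two subadditive-limit constructions that coincide under the stated torsion-freeness hypotheses, and invoking the remainder of the Cantrell--Furman argument is a direct citation.
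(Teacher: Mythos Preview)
Your overall strategy matches the paper: the statement is a direct citation of \cite{CantrellFurman}, and in the case $\Gamma=N=H$ torsion-free with torsion-free abelianization the identification of $\Phi$ with the Cantrell--Furman norm is immediate (the paper says so explicitly in Remark~1; Appendix~\ref{grouptheory} is only needed for the genuinely virtually nilpotent case). You are also right that Appendix~\ref{fillgap} supplies a missing step in the passage to pointed Gromov--Hausdorff convergence, and that Pansu's theorem together with the bound $T \le Kd$ are the key auxiliary inputs.

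However, you have misidentified the nature of the gap. Cantrell--Furman do \emph{not} establish the approximate isometry property uniformly on balls; what they prove is the pointwise statement \eqref{eq:ptwise}: for each \emph{fixed} pair $(g,g') \in G_\infty \times G_\infty$, a certain limit vanishes almost surely. The gap, as stated in Remark~\ref{rem:correction}, is that failure of Gromov--Hausdorff convergence is thereby contained only in an \emph{uncountable} union of null sets (one for each pair $(g,g')$), which need not be null. The fix in Appendix~\ref{fillgap} is to take a countable dense subset of $G_\infty \times G_\infty$, use compactness of the unit $d_{\|\cdot\|}$-ball to extract a limiting pair from any putative counterexample sequence, and then combine the pointwise Cantrell--Furman statement on the dense set with Pansu's theorem and $T \le Kd$ to reach a contradiction. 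The issue is the uniform almost-sure approximate isometry, not density of the image in the target; density is not where the argument breaks.
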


First let us construct relevant finite-index subgroups. 
Let $\Gamma$ be a finitely generated virtually nilpotent group. 
Then by definition, it contains a nilpotent subgroup $\Gamma'$ of finite index, and this 
is also finitely generated by Schreier's lemma (see e.g. ~\cite{KarMer} Theorem 14.3.1). 
Then $\Gamma'$ contains a \emph{torsion-free} subgroup $\Gamma''$ of finite index (see ~\cite{KarMer}, Theorem 17.2.2).
Take $N$ to be the kernel of the map $\Gamma \to \mathrm{Sym(\Gamma/\Gamma'')}$ given by the action of $\Gamma$
on the cosets of $\Gamma''$ by left multiplication.
Since $N \le \Gamma''$, $N$ is nilpotent and torsion free, and since $N$ is the kernel of a map to a finite subgroup,
it is a finite index normal subgroup of $\Gamma$.

Now we want to extract a finite index subgroup $H$ of $\Gamma$ which is nilpotent, torsion-free,
\emph{and} has torsion-free abelianization. One explicit construction is given by Yves Cornulier in the MathOverflow post ~\cite{YCor};
this construction also has the advantage that that the natural map $H^{ab} \to N^{ab}$ induced by the inclusion $H \xhookrightarrow{} N$
is itself an inclusion (also of finite index).

Here is the construction: recall that we have a projection map $N \to N^{ab} \to N^{ab}/N^{ab}_{tor} =: N^{ab}_{free}$.
Take a basis of $d$ generators $e_1,...,e_d$ for $\Z^d \cong N^{ab}_{free}$, and lift them to $s_1,...,s_d \in N$; then we claim that
$H:= \langle s_1,...,s_d \rangle \le N$ is a finite index subgroup with torsion free abelianization. 

To see that $H^{ab}$ has torsion-free abelianization, consider the natural map $H^{ab} \to N^{ab}_{free}$ induced by the map 
$H \hookrightarrow{} N \to N^{ab}_{free}$.
We claim this is an injection. For if $n_1 \bar{s}_1 + \cdots + n_d \bar{s}_d$ is in the kernel of this map, by the choice of $s_1,...,s_d$ this
means that $n_1 e_1 + \cdots + n_d e_d = 0$, which implies that $n_1,...,n_d=0$, since $e_1,...,e_d$ is a basis.
The map is also clearly surjective by construction, so $H^{ab} \cong N^{ab}_{free}$ and so $H$ has torsion-free abelianization.

To see that $H$ is finite index, first note that, from the above, $H^{ab} \le N^{ab}$ is finite index. We then use the following lemma; the proof
is taken from Cornulier's argument in ~\cite{YCor}:

\begin{lemma} 
Let $N$ be a finitely generated nilpotent group, and let $H$ be subgroup of $N$ such that $H[N,N]$ is finite index in $N$ (equivalently,
$H^{ab} \to N^{ab}$ has finite-index image in $N^{ab}$).
Then $H$ is finite index in $N$.
\end{lemma}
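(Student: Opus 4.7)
The plan is an induction on the nilpotency class $c$ of $N$. The base case $c=1$ is immediate: $[N,N]=1$, so the hypothesis reads $[N:H]<\infty$. For $c \ge 2$, let $N_c$ denote the last nontrivial term of the lower central series of $N$, so that $N_c \subseteq Z(N) \cap [N,N]$. First I would pass to $\bar N := N/N_c$, which has nilpotency class $c-1$. Because $N_c \subseteq [N,N]$, the hypothesis descends: the image $(HN_c/N_c)\cdot[\bar N,\bar N] = H[N,N]/N_c$ has finite index in $\bar N$. The inductive hypothesis then gives that $HN_c$ has finite index in $N$, and since $N_c$ is central we have $HN_c/H \cong N_c/(H \cap N_c)$. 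It thus suffices to show $N_c/(H\cap N_c)$ is finite; because $N_c$ is finitely generated abelian (as a subgroup of the finitely generated nilpotent group $N$), it is enough to exhibit, for each element $z$ of a generating set of $N_c$, a positive integer $k$ with $z^k \in H$.

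For this last step I would invoke the classical fact that the length-$c$ iterated commutator
\[ (x_1,\ldots,x_c) \;\longmapsto\; [x_1,[x_2,\ldots,[x_{c-1},x_c]\ldots]] \;\in\; N_c \]
descends to a well-defined multilinear map $(N^{ab})^c \to N_c$. This follows from the standard commutator identities (e.g.\ $[ab,y]=[a,y]^b[b,y]$) together with the lower-central-series inclusions $[N_i,N_j]\subseteq N_{i+j}$, which force every correction term arising in the calculation to land in $N_{c+1} = 1$. Fix a finite generating set $S$ of $N$; then $N_c$ is generated as an abelian group by length-$c$ iterated commutators $[s_{i_1},\ldots,s_{i_c}]$ with $s_{i_j}\in S$. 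Set $m := [N:H[N,N]]$. Then for each $s \in S$ we have $s^m = h_s u_s$ with $h_s \in H$ and $u_s \in [N,N]$, so that $[s^m]=[h_s]$ in $N^{ab}$. Applying multilinearity in each of the $c$ slots gives
\[ [s_{i_1},\ldots,s_{i_c}]^{m^c} = [s_{i_1}^m,\ldots,s_{i_c}^m] = [h_{i_1},\ldots,h_{i_c}] \in H\cap N_c, \]
which closes the induction.

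The main obstacle is the commutator-calculus bookkeeping needed to establish both well-definedness and additivity of the iterated commutator map on $(N^{ab})^c$. Each slot requires a separate verification, and the argument for deeper slots uses nilpotency more delicately, because one must track a cascade of correction terms through repeated applications of the basic identities and show via $[N_i,N_j]\subseteq N_{i+j}$ that they all eventually land in $N_{c+1}=1$. Once that multilinearity is in hand, the rest of the argument is essentially formal.
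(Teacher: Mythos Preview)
Your proof is correct and follows the same overall architecture as the paper: induct on the nilpotency class, apply the inductive hypothesis to $N/N_c$ to get $[N:HN_c]<\infty$, and then use multilinearity of the iterated commutator map $(N^{ab})^c \to N_c$ to show $H\cap N_c$ has finite index in $N_c$.

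The only real difference is in how that last step is executed. The paper argues abstractly: it shows that if $B\le A$ is a finite-index inclusion of finitely generated abelian groups then $\bigotimes^c B$ has finite-index image in $\bigotimes^c A$, via an explicit (and somewhat laborious) construction of coset representatives, and then pushes this through the surjection $\bigotimes^c N^{ab}\twoheadrightarrow N_c$ to conclude that $H^c$ (hence $H\cap N_c$) has finite index in $N_c$. Your route is more direct and more elementary: with $m=[N:H[N,N]]$ you simply compute $[s_{i_1},\dots,s_{i_c}]^{m^c}=[s_{i_1}^m,\dots,s_{i_c}^m]=[h_{i_1},\dots,h_{i_c}]\in H\cap N_c$ for each generating commutator, which immediately gives finite index. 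This avoids the tensor-product bookkeeping entirely and makes the dependence on $[N:H[N,N]]$ explicit; the paper's version, on the other hand, isolates a clean reusable statement about tensor powers of finite-index subgroups.
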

\begin{proof}
We proceed by induction on the nilpotency degree of $N$. If $N$ is abelian, then the statement is immediate.

Suppose the statement holds for all nilpotent groups of degree $k-1$, and suppose $N$ is degree $k$. Let $N^k$ be the $k^{th}$ subgroup
in the descending central series for $N$. By our inductive hypothesis applied to $N/N^k$, $HN^k$ is a finite index subgroup of $N$.
So all that remains is to show that $H$ is finite index in $HN^k$.

For this, first note that since all $(k+1)$-fold commutators vanish, the $k$-fold commutator map $N \times \cdots \times N \to N^k$ is ``multilinear''
in the sense that
\[ [a_1,\cdots,xy,\cdots,a_k] 
= [a_1,\cdots,x,\cdots,a_k]\cdot [a_1,\cdots,y,\cdots,a_k]; \]
we also see that the output only depends on the abelianizations of $a_1,...,a_k$, and thus the $k$-fold commutator map induces a
surjective homomorphism $N^{ab} \otimes \cdots \otimes N^{ab} \to N^k$. We claim that the map $\bigotimes^k H^{ab} \to \bigotimes^k N^{ab}$
induced by the finite index inclusion $H \to N$ has image which is finite index in $\bigotimes^k N^{ab}$. Once we know this,
since $H^k$ is precisely the composition of the map $\bigotimes^k H^{ab} \to \bigotimes^k N^{ab} \to N^k$, $H^k$ is finite index in
$N^k$, and hence $H$ is finite index in $HN^k$.

Now, to see that the image of $\bigotimes^k H^{ab} \to \bigotimes^k N^{ab}$ is finite index, we use the following general fact: If $A$ is
a finitely generated abelian group and $B \le A$ is a subgroup of finite index, then for any $i \ge 1$, $\bigotimes^i B \le \bigotimes^i A$
is finite index. For $i=1$, this is immediate. Now, inductively assume $T'$ is a finite set such that $T' + \bigotimes^i B = \bigotimes^i A$, and let $S'$ be a finite generating set for
$\bigotimes^i B$. Also let $T$ be a finite set such that $T + B = A$ and let $S$ be a finite generating set for $B$.
We claim that the set
\[
   \left\{ \sum_{\sigma \in S'} t_{\sigma} \otimes \sigma 
   + \sum_{\tau \in T'}\left( t_{\tau} \otimes \tau + \sum_{s \in S}  s \otimes t'_{s,\tau} \right) :
   t_{\sigma}, t_{\tau} \in T, t'_{s,\tau} \in T' \right\}
\]
forms a finite set of coset representatives for $\bigotimes^{i+1} B$ in $\bigotimes^{i+1} A$.

To see this, first consider a general element of $\bigotimes^{i+1} A$. It is a sum of elements of the form
\[ 
   (\sum_{s \in S} m_s s + t) \otimes (\sum_{s' \in S'} m_{s'} s' + t')
\]
where $t \in T, t' \in T'$, $m_s, m_{s'} \in \Z$,
and hence, by expansion, equal to 
\[
   \sum_{\sigma \in S'} ( \sum_{s \in S} m_{\sigma, s} s + t_{\sigma} ) \otimes \sigma +
   \sum_{\tau \in T'} ( \sum_{s \in S} m_{\tau, s} s + t_{\tau} ) \otimes \tau
\]
for some $m_{\sigma, s}, m_{\tau, s} \in \Z$, $t_{\sigma}, t_{\tau} \in T$.
Since every $s \otimes \sigma \in \otimes^{k+1} B$, the element
\[
   \sum_{\sigma \in S'} t_{\sigma} \otimes \sigma + \sum_{\tau \in T'}\left( t_{\tau} \otimes \tau + \sum_{s \in S} s \otimes m_{\tau, s} \tau \right)
\]
represents the same coset of $\otimes^{k+1} B$.
For each $s, \tau$, by the inductive hypothesis, we have
\[ 
   s \otimes m_{\tau,s} \tau = s \otimes \left( \sum_{s' \in S'} n_{s', \tau} s' + t'_{s,\tau} \right)
\]
for some $n_{s',\tau} \in \Z$ and $t'_{s,\tau} \in T'$, and this is equivalent \emph{modulo} $\otimes^k B$ to 
\[
   \sum_{s' \in S'} s \otimes t'_{s,\tau}.
\]
That is, an arbitrary element is equivalent to one in the set provided, as desired.
\end{proof}

In sum, we have $H \le N \unlhd \Gamma$ finite index inclusions, where $N$ is torsion-free and $H$ is torsion-free with torsion-free abelianization.

Now, let $T$ be a stationary random metric on $\Gamma$ which is almost surely inner and bi-Lipschitz to a word metric on $\Gamma$.
Recall that a metric space is called \emph{inner} if for all $\epsilon > 0$, there exists $0<R<\infty$ such that
for any $x,y \in \Gamma$, there exists an $(\epsilon,R)$-coarse geodesic from $x$ to $y$,
that is, a sequence $x=p_0, p_1,...,p_M=y$ in $\Gamma$ such that each $d(x_{i-1},x_i) \le R$ and
\[
   \sum_{i=1}^M d(p_{i-1},p_i) \le (1+\epsilon)d(x,y).
\]
(Note that, in the main body of the paper, we consider $T$ an FPP with edge weights $w$ uniformly bounded above;
such $T$ is automatically inner). 
We want to show that
\[
   (\Gamma, \frac{1}{n} T) \to (G_{\infty}, d_{\Phi}).
\]
By Proposition \ref{prop:finiteindex}, it suffices to show that
\[
   (H, \frac{1}{n} T|_H) \to (G_{\infty}, d_{\Phi}).
\]
Thus, we want to apply Theorem \ref{cantrellfurmanthm} to $H$, so first we must check that the hypotheses are satisfied.
\begin{prop}
Let $\Gamma, H, T$ be as above. Then $T|_H$ is bi-Lipschitz to a word metric on $H$ and $T|_H$ is inner.
\end{prop}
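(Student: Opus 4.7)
The plan is to handle the two claims separately.

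For the bi-Lipschitz assertion, the only ingredient beyond the hypothesis on $T$ is the standard fact that a finitely generated finite-index subgroup is undistorted: the inclusion $(H, d_H) \hookrightarrow (\Gamma, d_\Gamma)$ is bi-Lipschitz. The upper bound $d_\Gamma \le c\, d_H$ comes from expressing each generator of $H$ as a word in the generating set of $\Gamma$, and the matching lower bound $d_H \le c'\, d_\Gamma$ follows from the classical Schreier-coset argument: any $d_\Gamma$-geodesic between elements of $H$ is subdivided into unit steps, each endpoint shifted back to $H$ at bounded cost, yielding a path of bounded $d_H$-length. Combining this with the a.s.\ inequalities $\kappa\, d_\Gamma \le T \le K\, d_\Gamma$ immediately gives $\kappa'\, d_H \le T|_H \le K'\, d_H$.

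For the inner assertion, fix $\epsilon > 0$ and $x,y \in H$. The naive approach is to apply innerness of $T$ with parameter $\epsilon/2$ to get an $(\epsilon/2, R)$-coarse geodesic $x = p_0, \ldots, p_M = y$ in $\Gamma$, and then choose for each $i$ a nearest $p_i' \in H$ (with $p_0' = x$, $p_M' = y$) satisfying $T(p_i, p_i') \le KD$, where $D$ bounds the $d_\Gamma$-diameter of the cosets of $H$. The new step sizes are then at most $R + 2KD$, so the step-size condition is met, but summing yields a total length of at most $(1+\epsilon/2)T(x,y) + 2KD(M-1)$. After deleting repeated consecutive points, each remaining step of the original sequence has $T$-length at least $\kappa$ by the lower bi-Lipschitz bound, so $M \le (1+\epsilon/2)T(x,y)/\kappa$; the additive error is then linear in $T(x,y)$, which is fatal.

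The fix is to coarsen the sequence \emph{before} lifting. Pick an integer $k$ large enough that $2KD/(\kappa k) < \epsilon/4$, and keep only every $k$-th point: set $q_j := p_{jk}$ for $0 \le j \le \lfloor M/k \rfloor$, together with a final point at $p_M$. By the triangle inequality each $T(q_{j-1}, q_j) \le kR$, and telescoping preserves the total-length bound $\sum T(q_{j-1}, q_j) \le (1+\epsilon/2)T(x,y)$. Now lift each $q_j$ to $q_j' \in H$; the number of lifts is at most $\lceil M/k \rceil + 1$, so the lifting contributes a total additive error of at most $(2KD/(\kappa k))(1+\epsilon/2)T(x,y) + 4KD < (\epsilon/2)T(x,y) + 4KD$. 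Thus for $T(x,y)$ above a fixed threshold one obtains total length at most $(1+\epsilon)T(x,y)$ with step size at most $kR + 2KD$; for $T(x,y)$ below the threshold one simply uses the one-step sequence $x, y$ and enlarges $R'$ to absorb this threshold.

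The main obstacle is precisely this accounting: the lift-to-$H$ operation incurs an additive cost of order $1$ per step, while a $T$-coarse geodesic may have order $T(x,y)$ steps, so without coarsening the relative error cannot be driven to zero. The coarsening reconciles these two scales by making each block long enough that the lifting cost at its endpoints is a small fraction of the block's $T$-length. Everything else --- the Schreier argument for undistortion, the triangle-inequality bookkeeping, and the small-distance case --- is routine.
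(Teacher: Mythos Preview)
Your proof is correct and follows the same core strategy as the paper: first coarsen the $(\epsilon/2,R)$-coarse geodesic in $\Gamma$ so that the number of steps is $O(T(x,y))$ with a controllable constant, and only then project each vertex to $H$ at bounded $T$-cost. The one notable difference is in how the step count is bounded: the paper coarsens greedily, choosing each new point as far along the original sequence as possible while keeping the step $\le R$, which forces each step to have length at least $R-r$ and hence $M \le \lceil T(h,h')/(R-r)\rceil$; it then picks $R$ large so that $4C/(R-r)\le \epsilon/2$. You instead invoke the lower bi-Lipschitz bound $T\ge\kappa d$ to get $M\le (1+\epsilon/2)T(x,y)/\kappa$ and then thin by a fixed factor $k$. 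Both work; the paper's version has the minor advantage that its innerness argument uses only the upper bound $T\le Kd$, not the full bi-Lipschitz hypothesis, but under the stated assumptions this makes no difference.
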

\begin{proof}
$T|_H$ is bi-Lipschitz to $d|_H$, and since $H \le \Gamma$ is finite index, any word metric on $H$ is bi-Lipschitz to $d|_H$ (this can be
seen using Schreier generators for $H$, see e.g. Theorem 14.3.1 in ~\cite{KarMer}),
so we have the first claim.

Next, we show innerness. Let $\epsilon > 0$. First, using the innerness of $T$ on $\Gamma$, choose $r>0$ so that any $x,y \in \Gamma$
can be joined by an $(\frac{\epsilon}{2}, r)$-coarse geodesic. Next, note that since $H \le \Gamma$ is finite index and $T \le K d$ a.s. 
for some $K< \infty$, we have
\[
   \max_{g \in \Gamma} T(g,H) \le K \max_{g \in \Gamma} d(g,H) =: C
\]
for some non-random constant $0<C<\infty$.
Now choose $0<R<\infty$ sufficiently large so that $0 < \frac{4C}{R-r} \le \frac{\epsilon}{2}$. We claim that any $h, h' \in H$ can be joined
by an $(\epsilon, R + 2C)$-coarse geodesic in $H$.

To construct such a coarse geodesic, first take an $(\frac{\epsilon}{2}, r)$-coarse geodesic $h=p'_0,p'_1,...,p'_{M'}=h'$ in $\Gamma$.
By deleting points, we can construct a $(\frac{\epsilon}{2},R)$-coarse geodesic $h=p_0,...,p_M=h'$ with 
\[
   M \le \left\lceil \frac{ T(h,h') }{ R - r} \right\rceil \le \frac{ 2T(h,h') }{R-r},
\]
where the last inequality only holds for $T(h,h') \ge R-r$, but if $T(h,h') \le R + 2C$ then $p_0=h, p_1=h'$ trivially gives an 
$(\epsilon, R+2C)$-coarse geodesic, so we may assume this inequality holds.

Lastly, for each $p_i$, choose $q_i \in H$ with $T(p_i,q_i) \le C$ (and of course $q_0=p_0=h, q_M=p_M=h'$). Then
each $T(q_{i-1},q_i) \le T(p_{i-1},p_i) + 2C \le R + 2C$ and 
\begin{align*}
   \sum_{i=1}^M T(q_{i-1},q_i) &\le \sum_{i=1}^M T(p_{i-1},p_i) + 2CM \le (1+\frac{\epsilon}{2}) T(h,h') + 2CM \\
                                               &\le  (1 + \frac{\epsilon}{2})T(h,h') + 2C \cdot \frac{2 T(h,h')}{R - r} \\
                                               &\le (1 + \epsilon)T(h,h'),
\end{align*}
so $q_0,...,q_M$ is an $(\epsilon,R+2C)$-coarse geodesic in $H$, as desired.
\end{proof}

Now, note that the Malcev completions of $H$ and $N$ coincide; if $N$ is a cocompact lattice in $G$, then as a finite-index subgroup of $N$,
$H$ is also cocompact in $G$. Therefore $H$ and $N$ have the same associated graded nilpotent Lie group $G_{\infty}$ as well.
Thus, Theorem \ref{cantrellfurmanthm} tells us that
\[
   (H, \frac{1}{n} T|_H) \to (G_{\infty}, d_{\Phi_H}),
\]
where we define $\Phi_H$ to be the unique norm on $\g^{ab}$ asymptotically equivalent to the subadditive function
\[
   \tilde{T}_H (h) := \inf_{t \in H : t^{ab} = h} \E T(1,t)
\]
on $H^{ab}$. (Recall that we can relate functions on $H^{ab}$ and $\g^{ab}$, since we have
a map $H^{ab} \to \g^{ab}$ and an isomorphism $H^{ab} \otimes \R \cong \g^{ab}$ induced by the composition
\[
   H \hookrightarrow{} G \to G/[G,G] \cong \g/[\g,\g] =: \g^{ab}.)
\]

Thus, to deduce our statement of Theorem \ref{cantrellfurmanthm}, it only remains to show that $\Phi_H = \Phi$, where recall that we define
$\Phi$ to be the unique norm on $\g^{ab}$ which is asymptotically equivalent to the subadditive function
\[
   \tilde{T}(n) := \inf_{t \in N : t^{ab}_{free} = n} \E(1,t)
\]
on $N^{ab}_{free}$. 

\begin{prop}
$\Phi_H = \Phi$.
\end{prop}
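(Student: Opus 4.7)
The plan is to use the isomorphism $\iota\colon H^{ab} \xrightarrow{\sim} N^{ab}_{free}$ established above (from the Cornulier construction) to show that the subadditive functions $\tilde{T}_H$ and $\tilde{T}\circ\iota$ differ by a uniformly bounded constant on $H^{ab}$. Since $\Phi_H$ and $\Phi$ are by construction the unique norms on $\g^{ab}$ asymptotic to $\tilde{T}_H$ and $\tilde{T}$ on the full-rank lattices $H^{ab}$ and $N^{ab}_{free}$, respectively, and since the natural embedding $H^{ab} \to \g^{ab}$ factors as $H^{ab} \xrightarrow{\iota} N^{ab}_{free} \hookrightarrow \g^{ab}$, such a bounded comparison will immediately force $\Phi_H = \Phi$.

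For the comparison, I would first fix a finite set $F \subset N$ of coset representatives for $H$ in $N$, so that every $t \in N$ admits a unique decomposition $t = h_0 f$ with $h_0 \in H$, $f \in F$. For each $f \in F$, set $h_f := \iota^{-1}(\pi(f^{ab})) \in H^{ab}$, where $\pi\colon N^{ab}\to N^{ab}_{free}$ is the torsion quotient, and pick some fixed $c_f \in H$ whose image in $H^{ab}$ equals $h_f$ (possible since $H\to H^{ab}$ is surjective). The constant
\[
   C := \max_{f \in F}\bigl[\E T(1,f) + \E T(1,f^{-1}) + \E T(1,c_f)\bigr]
\]
is finite since $F$ is. The inequality $\tilde{T}_H(h) \ge \tilde{T}(\iota(h))$ is immediate: every $t \in H$ with image $h$ in $H^{ab}$ also satisfies $t^{ab}_{free}=\iota(h)$ as an element of $N$, so the infimum on the right is taken over a larger set. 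For the reverse inequality, take any $t \in N$ with $t^{ab}_{free}=\iota(h)$, write $t = h_0 f$, and project the $N^{ab}$-identity $t^{ab} = h_0^{ab} + f^{ab}$ down to $N^{ab}_{free}$; using that the composition $H^{ab} \hookrightarrow N^{ab} \xrightarrow{\pi} N^{ab}_{free}$ equals $\iota$, one finds that the image of $h_0$ in $H^{ab}$ is $h - h_f$. Therefore $t':= h_0 c_f \in H$ has image $h$ in $H^{ab}$, and two applications of the triangle inequality plus left-invariance of $\E T$ give
\[
   \E T(1,t') \le \E T(1,h_0) + \E T(1,c_f) \le \E T(1,t) + \E T(1,f^{-1}) + \E T(1,c_f) \le \E T(1,t) + C.
\]
Taking infima over $t$ yields $\tilde{T}_H(h) \le \tilde{T}(\iota(h)) + C$.

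Combining the two bounds gives $|\tilde{T}_H(h) - \tilde{T}(\iota(h))| \le C$ for all $h \in H^{ab}$, so $\tilde{T}_H$ and $\tilde{T}\circ\iota$ are asymptotically equivalent on the full-rank lattice $H^{ab}$; by the uniqueness of the norm on $\g^{ab}$ asymptotic to a subadditive function on a full-rank lattice, together with the compatibility of $\iota$ with the embeddings into $\g^{ab}$, we conclude $\Phi_H = \Phi$. The only real subtlety is the bookkeeping between the two distinct abelianization maps (in $H$ versus in $N$, with torsion projected out in the latter), which is what enters when deriving that the $H^{ab}$-image of $h_0$ equals $h - h_f$; once this identification is set up via $\iota$, the remainder is a routine triangle-inequality estimate.
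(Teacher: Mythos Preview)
Your proof is correct and follows essentially the same strategy as the paper: both show $|\tilde{T}_H - \tilde{T}\circ\iota|\le C$ on $H^{ab}$ by choosing a finite set of coset representatives and applying the triangle inequality together with left-invariance of $\E T$. The only difference is that the paper uses representatives $R$ for $H\cap\widetilde{[N,N]}$ inside $\widetilde{[N,N]}$ (so that each $r\in R$ already has trivial image in $N^{ab}_{free}$, obviating your correction element $c_f$), whereas you take representatives $F$ for $H$ in $N$ and then adjust by $c_f$; both choices work equally well.
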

\begin{proof}
Note that $H^{ab}$ and $N^{ab}_{free}$ are identified with the same subgroup of $\g^{ab}$ since the 
inclusion $H^{ab} \to \g^{ab}$ is exactly equal to the composition of the isomorphism $H^{ab} \cong N^{ab}_{free}$
and the inclusion $N^{ab}_{free} \to \g^{ab}$. Using the isomorphism $H^{ab} \cong N^{ab}_{free}$ to consider $\tilde{T}_H$
as a subadditive function on $N^{ab}_{free}$, we have
\[
   \tilde{T}_H (n) = \inf_{t \in H : t^{ab}_{free} = n} \E T(1,t).
\]
From this it is clear that $\tilde{T} \le \tilde{T}_H$.

To show a lower bound, first note that
since $H$ is finite index in $N$, $H \cap \widetilde{[N,N]}$ is finite-index in $\widetilde{[N,N]}$.
Let $R$ be a finite set of right coset representatives for $H \cap \widetilde{[N,N]}$ in $\widetilde{[N,N]}$, 
that is, $N \cap \widetilde{[N,N]} = \bigcup_{r \in R} H \cap \widetilde{[N,N]} r$.
Set $ C := \max_{r \in R} |r|$, where $|\cdot| = d(1,\cdot)$ is, as always, the word length in $\Gamma$ with respect to the generating
set $S$. Then we have
\[
   \tilde{T}(n) = \inf_{t \in H, r \in R : t^{ab}_{free} = n} \E T(1,tr) 
   \ge \inf_{t \in H, r \in R : t^{ab}_{free} = n} \E T(1,t) - \E T(1,r) \ge \Phi_H(n) - KC,
\]
where we have used that $T \le Kd$.
Thus $ |\tilde{T}(n) - \tilde{T}_H(n)| \le KC = o(n) $ and $\Phi = \Phi_H$, as desired.
\end{proof}

\section{Gromov-Hausdorff convergence to the limit shape} \label{fillgap}

Recall the notion of pointed Gromov-Hausdorff convergence (~\cite{GromovMetric}). There are many equivalent conditions for this convergence,
but here we use a particular sufficient condition. Let $(X_n,d_n, o_n), (X_0,d_0, o_0)$ be metric spaces with distinguished basepoints
 $o_n, o_0$.
 A sequence of maps $f_n: X_n \to X_0$
 is called a \emph{sequence of of pointed Gromov-Hausdorff approximations} if for every $\epsilon > 0$, for all sufficiently large $n$
 we have
 \begin{enumerate}
 \item $d_0(f_n(o_n,o_0)) < \epsilon$,
 \item every point of $B(o_0, 1/\epsilon)$ is within distance $\epsilon$ of $f_n(B(o_n, 1/\epsilon))$,
 \item $(1-\epsilon)d_n(x,y) - \epsilon \le d_0(f_n(x),f_n(y)) \le (1+\epsilon)d_n(x,y) _ \epsilon$ for all $x,y \in B(o_n, 1/\epsilon)$.
 \end{enumerate}
 If $f_n:X_n \to X_0$ is a sequence of pointed Gromov-Hausdorff approximations, then $X_n$ pointed Gromov-Hausdorff converges to $X_0$.
Here, our metric spaces are groups with various metrics, and the basepoint will always be the identity element.

In ~\cite{CantrellFurman}, Section 4.4, Cantrell and Furman prove the following: for any fixed $g, g' \in G^{\infty}$, almost surely
\begin{equation}
   \lim_{\epsilon \to 0} \limsup_{t \to \infty} \sup \left\{ \frac{1}{t} |T(\gamma, \gamma') - d_{\Phi}(g,g')| : 
   \gamma,\gamma' \in \Gamma, d_{\|\cdot\|}(\scl_{\frac{1}{t}} \gamma, g), d_{\|\cdot\|}(\scl_{\frac{1}{t}} \gamma', g') < \epsilon \right\} = 0,
   \label{eq:ptwise}
\end{equation}
where $\Gamma, G_{\infty}, T, d_{\Phi}, d_{\| \cdot \|}$ are all as defined in Section \ref{defnsandbackground},
and the maps $\scl_{\frac{1}{t}} : N \to G_{\infty}$ are as defined in Appendix \ref{liegroupconstructions}.
In particular,
$(G_{\infty},d_{\| \cdot \|})$ is the scaling limit of $\Gamma$ endowed with the word metric as given by Pansu's theorem:
\begin{thm}{(Pansu, ~\cite{Pansu})} \label{pansuthm}
   \[ \scl_{\frac{1}{t}} : (\Gamma, \frac{1}{t} d) \to (G_{\infty}, d_{\| \cdot \|}) \]
   is a sequence of Gromov-Hausdorff approximations.
\end{thm}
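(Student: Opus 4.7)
The plan is to verify the three defining conditions of a sequence of pointed Gromov-Hausdorff approximations. Condition (1) is immediate since $\scl_{1/t}(1) = 1$. The main content is the asymptotic bi-Lipschitz estimate (3), and condition (2) will follow from (3) combined with a standard argument using Chow's theorem.

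First I would reduce to the case $\Gamma = N$, a torsion-free nilpotent cocompact lattice in $G$. This reduction is harmless because $\Gamma$ and $N$ differ by a finite index subgroup, their word metrics are bi-Lipschitz on $N$, they share the same associated graded Lie group $G_{\infty}$, and by Proposition \ref{prop:finiteindex} Gromov-Hausdorff convergence for $N$ promotes to Gromov-Hausdorff convergence for $\Gamma$. For the upper bound in (3), the key point is that a word of length $L$ in $S$, pushed to $\g^{ab}$ via abelianization and then rescaled by $\delta_{1/t}$, traces out a piecewise-linear horizontal path in $G_{\infty}$ whose $\|\cdot\|$-length is essentially $L/t$, modulo corrections coming from noncommutativity. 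Because every nontrivial commutator lies in $\g^i$ for some $i \ge 2$, it is rescaled by $t^{-i}$ and so its contribution to $d_{\|\cdot\|}(\scl_{1/t}\gamma, \scl_{1/t}\gamma')$ vanishes as $t \to \infty$. Combined with the subadditivity of the word metric restricted to $N^{ab}_{free}$ (asymptotic to $\|\cdot\|$ by construction), this yields the $(1+\epsilon)$-upper bound for $t$ large.

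The lower bound is the main obstacle, and I would attack it by a compactness argument. Suppose towards a contradiction that some sequence $\gamma_t, \gamma'_t \in N$ of bounded $\frac{1}{t}d$-norm violated the lower bound. Take Cayley-graph geodesics $\xi_t$ realizing $d(\gamma_t,\gamma'_t)$, reparametrize on $[0,1]$ at constant speed, and push to $G_{\infty}$ via $\scl_{1/t}$; the resulting curves are uniformly Lipschitz with respect to $d_{\|\cdot\|}$ (using the upper bound already proved) and remain in a compact set, so Arzel\`a-Ascoli yields a subsequential uniform limit $\xi_{\infty}$ in $G_{\infty}$. A Baker-Campbell-Hausdorff computation, exploiting again that higher commutators scale by $t^{-i}$ with $i \ge 2$, shows that $\xi_{\infty}$ must be horizontal (admissible), that its $\|\cdot\|$-length is at most $\liminf_t \frac{1}{t} d(\gamma_t,\gamma'_t)$, and that it connects the limits of $\scl_{1/t}\gamma_t$ and $\scl_{1/t}\gamma'_t$. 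This contradicts the definition of $d_{\|\cdot\|}$ as the infimum of horizontal path lengths. Finally, condition (2) follows: by Chow's theorem every $g$ in a ball of $G_{\infty}$ is the endpoint of a horizontal polygonal path whose segments live in $\g^{ab}$, and each such segment can be discretized and lifted to a word in $S$ whose Cayley-graph endpoint approximates $g$ under $\scl_{1/t}$ as $t \to \infty$; the upper bound from (3) certifies that this lifted endpoint lies in a ball of the appropriate radius in $(\Gamma, \frac{1}{t}d)$.
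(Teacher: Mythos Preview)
The paper does not prove this theorem at all: it is stated as Pansu's theorem with a citation to~\cite{Pansu} and used as a black box in Appendix~\ref{fillgap}. There is therefore no proof in the paper to compare your proposal against.

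That said, your outline is recognizably the standard strategy going back to Pansu, and the three-step structure (upper bound via horizontal approximation of words, lower bound via Arzel\`a--Ascoli and horizontality of limit curves, almost-surjectivity via Chow) is the right skeleton. A couple of places where the sketch, as written, would need real work to become a proof: in the upper bound, the claim that a word in $S$ ``traces out a piecewise-linear horizontal path in $G_\infty$'' under $\scl_{1/t}$ is not literally true, since $\scl_{1/t}$ is not a homomorphism and the image of a generator need not lie in the horizontal subspace; one must quantify, via Baker--Campbell--Hausdorff and the polynomial bounds on the exponents in the lower central series, how far $\scl_{1/t}(\gamma s)$ is from $\scl_{1/t}(\gamma)\cdot\exp(\delta_{1/t}(\text{horizontal part of }\log s))$, and sum those errors. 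Similarly, in the lower bound the assertion that the Arzel\`a--Ascoli limit $\xi_\infty$ is horizontal is the heart of the matter and requires a genuine computation, not just the slogan that higher commutators scale by $t^{-i}$; one typically shows that the increments of the rescaled discrete geodesic, projected to $\g^i/\g^{i+1}$ for $i\ge 2$, are $o(1/t)$ uniformly on bounded word-balls. None of this is wrong in spirit, but neither step is routine.
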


To prove that $\scl_{\frac{1}{t}} : (\Gamma, \frac{1}{t} T) \to (G_{\infty}, d_\Phi)$ is a sequence of Gromov-Hausdorff approximations, 
by homogeneity of the norm $d_{\Phi}$, it suffices 
to show that, for any $\epsilon > 0$, there exists $R>0$ such that for any 
$|\gamma|,|\gamma'| \ge R$,
\[
   | T(\gamma, \gamma') - d_{\Phi}(\scl_1(\gamma), \scl_1(\gamma')) | \le \epsilon \max(|\gamma|,|\gamma'|).
\] 
The rest of this appendix is devoted to proving this fact.
\begin{rmk}\label{rem:correction}
In ~\cite{CantrellFurman}, it is shown that the event of failure of Gromov-Hausdorff convergence
is contained in an uncountable union of null-sets. More specifically, they show that failure of Gromov-Hausdorff convergence
entails the existence of some pair $g,g' \in G_{\infty}$ for which Equation \eqref{eq:ptwise} fails, but a priori $(g,g')$ ranges over
the uncountable set $G_{\infty} \times G_{\infty}$.
It is necessary to show that it is contained in a \emph{countable} union of 
null-sets.
\end{rmk}
Now, let $\{(g_n,g'_n)\}$ be a countable dense subset of $G_{\infty} \times G_{\infty}$. With probability 1, Equation \eqref{eq:ptwise}
holds for all $(g_n,g'_n)$ simultaneously. We show that on this probability 1 subset Gromov-Hausdorff convergence holds.

Suppose that Gromov-Hausdorff convergence fails, that is, there exists $\epsilon_0 > 0$ and some sequence 
$(\gamma_n, \gamma'_n) \in \Gamma \times \Gamma$ with $\min(|\gamma_n|, |\gamma'_n|) \to \infty$ such that
\[
   \frac{1}{t_n} | T(\gamma, \gamma') - d_{\Phi}(\scl_1(\gamma), \scl_1(\gamma') | \ge \epsilon_0,
\]
where we define $t_n := \max( |\gamma_n| , |\gamma'_n| )$.
By homogeneity of $d_{\Phi}$, this is equivalent to
\begin{equation} \label{eq:counterexample}
   \left| \frac{1}{ t_n} T(\gamma, \gamma') - 
   d_{\Phi}(\scl_{\frac{1}{t_n}} \gamma_n, \scl_{\frac{1}{t_n}} \gamma'_n) \right|
   \ge \epsilon_0.
\end{equation}
Since the sequence $(\scl_{\frac{1}{t_n}} \gamma_n, \scl_{\frac{1}{t_n}} \gamma'_n)$
lies in the product of the unit $d_{\|\cdot\|}$ balls of $G_{\infty}$, by compactness we may pass to a subsequence and assume that
\[
   (\scl_{\frac{1}{t_n}} \gamma_n, \scl_{\frac{1}{t_n}} \gamma'_n) \to (g_0,g'_0)
\]
for some $(g_0,g'_0) \in G_{\infty} \times G_{\infty}$. Convergence holds in the $d_{\|\cdot\|}$ metric as well as the $d_{\Phi}$ metric.

Now choose $N$ sufficiently large so that
\begin{equation} \label{eq:discreteapproximationclose}
   |d_{\Phi}( \scl_{\frac{1}{t_n}} \gamma_n, \scl_{\frac{1}{t_n}} \gamma'_n)
   - d_{\Phi}(g_0,g_0')| \le \frac{\epsilon_0}{2}
\end{equation}
for all $n \ge N$.
Combining Equations \eqref{eq:counterexample} and \eqref{eq:discreteapproximationclose} gives
\begin{equation} \label{eq:contradict}
   |\frac{1}{t_n} T(\gamma_n, \gamma'_n) - d_{\Phi}(g_0, g'_0) | \ge \frac{\epsilon_0}{2}.
\end{equation}
Fix $\delta' > 0$ (to be chosen later).
Now choose $(g_{m_0}, g'_{m_0})$ from our countable dense set such that
\[
   \max( d_{\| \cdot \|}(g_{m_0}, g_0), d_{\| \cdot \|}(g'_{m_0}, g'_0), d_{\Phi}(g_{m_0}, g_0), d_{\Phi}(g'_{m_0}, g'_0)) \le \delta'.
\]
For each $k \ge 1$ define $\gamma_{m_0}^k$ to be the $\gamma \in \Gamma$ such that $\scl_{\frac{1}{k}}$ has minimal
distance to $g_{m_0}$, and similarly define $\gamma_{m_0}'^k$.
Then by Equation \eqref{eq:ptwise} we have
\[
   \left| \frac{1}{k} T(\gamma_{m_0}^k, \gamma_{m_0}'^k) - d_{\Phi}(g_{m_0}, g'_{m_0}) \right| \tendsto{k}{\infty} 0,
\]
and so we can choose $N$ also sufficiently large that for all $n \ge N$,
\[
   \left| \frac{1}{t_n} T(\gamma_{m_0}^{t_n}, \gamma_{m_0}'^{t_n}) - d_{\Phi}(g_{m_0}, g'_{m_0}) \right| \le \delta'.
\]
By Theorem \ref{pansuthm} we can also choose $N$ so that for all $n \ge N$,
\[
   \left| \frac{1}{t_n} d(\gamma_n, \gamma_{m_0}^{t_n}) - d_{\| \cdot \|}( g_0, g_{m_0}) \right|
   \le \delta',
\]
\[
   \left| \frac{1}{t_n} d(\gamma'_n, \gamma_{m_0}'^{t_n}) - d_{\| \cdot \|}( g'_0, g'_{m_0} ) \right|
   \le \delta'.
\]
Thus we have (again taking $k=\max(|\gamma_n|, |\gamma'_n|)$)
\begin{align*}
   \left| \frac{1}{t_n} T(\gamma_n, \gamma'_n) - d_{\Phi}(g_0,g'_0) \right| 
   &\le \left| \frac{ T(\gamma_n, \gamma'_n) - T(\gamma_{m_0}^{t_n}, \gamma_{m_0}'^{t_n}) }{ t_n } \right| \\
   &+ \left| \frac{1}{t_n} T(\gamma_{m_0}^{t_n}, \gamma_{m_0}'^{t_n}) - d_{\Phi}(g_{m_0}, g'_{m_0}) \right| \\
   &+ | d_{\Phi}(g_{m_0}, g'_{m_0}) - d_{\Phi}(g_0,g'_0)|.
\end{align*}
By our choice of $(g_{m_0}, g'_{m_0})$, we have that the last term is bounded by $2 \delta$.
If $n \ge N$, we have that the second term is bounded by $\delta$.
To bound the first term, recall that by assumption, $T \le Kd$ and hence
\begin{align*}
   | T(\gamma_n, \gamma'_n) - T(\gamma_{m_0}^{t_n}, \gamma_{m_0}'^{t_n}) |
   \le T(\gamma_n, \gamma_{m_0}^{t_n}) + T(\gamma'_n, \gamma_{m_0}'^{t_n})
   \le K (d(\gamma_n, \gamma_{m_0}^{t_n}) + d(\gamma'_n, \gamma_{m_0}'^{t_n})),
\end{align*}
and so
\begin{align*}
   \left| \frac{T(\gamma_n, \gamma'_n) - T(\gamma_{m_0}^{t_n}, \gamma_{m_0}'^{t_n})}{t_n} \right|
   &\le K\left( \frac{1}{t_n}d(\gamma_n, \gamma_{m_0}^{t_n}) + \frac{1}{t_n}d(\gamma'_n, \gamma_{m_0}'^{t_n})\right) \\
   &\le K\left( d_{\|\cdot\|}(g_0,g_{m_0}) + \delta + d_{\|\cdot\|}(g'_0,g'_{m_0}) + \delta \right) \le 4K\delta.
\end{align*}
All in all we have
\[
   \left| \frac{1}{t_n} T(\gamma_n, \gamma'_n) - d_{\Phi}(g_0,g'_0) \right| \le 4K\delta + 3\delta,
\]
and for a sufficiently small choice of $\delta$, this contradicts Equation \eqref{eq:contradict}, and so we are done.

 \subsection*{Acknowledgements} 
 The authors thank Nir Avni for providing resources on finitely generated nilpotent groups, 
 Sami Douba for conversations on topics related to this paper,
 and Yves Cornulier for providing helpful context by giving a counterexample showing that on nilpotent groups, being
 bi-Lipschitz to a word metric does not imply innerness.

\bibliography{nilfppbib}
\bibliographystyle{plain}
\end{document}